\definecolor{shadecolor}{rgb}{1, 0.8, 0.3}
\theoremstyle{plain}
\newtheorem{theorem}{Theorem}[section]
\newtheorem{lemma}[theorem]{Lemma}
\newtheorem{remark}[theorem]{Remark}
\theoremstyle{definition}
\newtheorem{definition}[theorem]{Definition}
\newcommand{\bydef}{\,\stackrel{\mbox{\tiny\textnormal{\raisebox{0ex}[0ex][0ex]{def}}}}{=}\,}
\title{Fourier-Taylor Parameterization of Unstable Manifolds
for Parabolic Partial Differential Equations: Formalism, Implementation 
and Rigorous Validation}
\author[1]{Christian Reinhardt  \thanks{Email: \tt c.p.reinhardt@vu.nl, partially supported by NWO} }
\author[2]{J.D. Mireles James \thanks{J.M.J partially supported by NSF grant DMS - 1318172
Email: {\tt jmirelesjames@fau.edu}}}
\affil[1]{Vrije Universiteit Amsterdam, Department of Mathematics}
\affil[2]{Florida Atlantic University, Department of Mathematical Sciences}
\date{\today}
\begin{document}
\maketitle

\begin{abstract}
In this paper we study high order expansions of 
chart maps for local finite dimensional unstable manifolds of hyperbolic equilibrium solutions of 
scalar parabolic partial differential equations.  Our approach is based on studying an 
infinitesimal invariance equation
for the chart map that recovers the
dynamics on the manifold in terms of a simple conjugacy. We develop formal series
solutions for the invariance equation and efficient numerical methods for 
computing the series coefficients to any desired finite order.
We show, under
mild non-resonance conditions, that the formal series expansion   converges in a 
small enough neighborhood of the equilibrium. 
An a-posteriori computer assisted argument proves
convergence in larger neighborhoods.  
We implement the method for a spatially inhomogeneous Fisher's equation
and numerically compute and validate 
high order expansions of some local unstable manifolds for morse 
index one and two.  
We also provide
a computer assisted existence proof of a saddle-to-sink 
heteroclinic connecting orbit.    
\end{abstract}
\vspace{20pt}
\noindent\textbf{Keywords}: Parametrization method, invariant manifolds, computer-assisted proof, contraction mapping, connecting orbit 
\newpage

\tableofcontents

\section{Introduction} \label{sec:intro}
Global analysis of nonlinear parabolic PDEs, from the dynamical 
systems point of view, begins by studying phase space landmarks such as
stationary and periodic solutions.
Once the existence, stability, and analytic properties of these
are catalogued, one wants to understand how the 
landmarks fit together and organize the phase space.
Classical dynamical systems theory for parabolic PDEs 
tells us that the phase space is organized by global invariant
objects such as heteroclinic connecting orbits and inertial 
manifolds, and a necessary first step toward understanding these
is to study the unstable manifolds of the landmarks. 
These unstable manifolds are necessarily 
finite dimensional, as the semi-flow generated by a parabolic
PDE is compact.

The present work deals with the numerical approximation of   
unstable manifolds of equilibrium solutions of scalar parabolic PDE.
Our approach is based on the parameterization method 
of \cite{param1, param2, param3}, which provides a general
functional analytic framework for studying \textit{non-resonant}
invariant manifolds in Banach spaces.  We refer also to the 
overview in \cite{jpJordiMarcioRafaPaper}, where the Parameterization 
Method for parabolic PDE is discussed in great generality (indeed 
this reference suggests the approach of the present work).   
The idea is to formulate a functional equation 
whose solutions are chart maps for the unstable manifold.
As suggested in \cite{jpJordiMarcioRafaPaper}, we exploit an infinitesimal 
conjugacy equation which depends explicitly on the form of the PDE
but does not involve the flow.  Because the parameterization 
satisfies a conjugacy, our method recovers the dynamics on the 
manifold in addition to the embedding.  
We develop a formal series solution of the conjugacy equation,
and implement a numerical scheme for computing the coefficients of 
the series to any desired order.

High order approximations are useful for studying the unstable manifold far 
from its equilibrium. Yet numerically evaluating a high order expansion 
far from the equilibrium raises concerns about accuracy.
The main result of the present work is a computer assisted argument which 
provides mathematically rigorous error bounds for high order approximations.
The argument does not require restricting the approximation to a 
small neighborhood of the equilibrium. 
Rather, we develop a-posteriori tools which 
use in a fundamental way that the numerical 
representation of the manifold approximately solves a functional equation.

The problem is infinite dimensional, and 
in order for our argument to succeed it is critical that we manage 
a number of errors introduced by the finite dimensional truncations.
 In the present work this truncation error analysis is 
 facilitated by two observations.
 First, the compactness/smoothing properties of 
the parabolic PDE allow us to control the spatial/spectral truncation.
Indeed  the computer assisted proofs implemented in 
Section \ref{sec:appl} make
substantial use of the fact that the PDE is formulated on a geometrically
simple domain, where the eigenexpansion of the differential operator is given explicitly in 
terms of Fourier (cosine) series. 
Second, the Parameterization Method admits certain free parameters 
(namely the scalings of the unstable eigenvectors) in the formulation of conjugacy 
equation, and these scalings control the decay rate of 
the formal series coefficients.  We exploit this control over the decay  
to insure that the truncated series expansion satisfies
some prescribed error tolerance. The second consideration is fundamental 
 to the Parameterization Method, and has nothing to do with the particular
 eigenbasis for the PDE or even the fact that we consider parabolic problems.

\begin{remark}[Computer assisted proof for equilibria of PDEs] \label{rem:CAP_PDE}
{\em

Establishing existence and stability of stationary solutions to PDEs is a subtle business. 
When the nonlinearities are 
strong and the PDE is far from a perturbative regime, it may be impossible to carry 
out this analysis analytically.  Numerical simulations provide valuable insight into the dynamics 
of PDEs, and in recent years substantial effort has gone into developing computer 
assisted methods of proof which validate simulation results.  

A thorough review of the literature on computer assisted proof for of PDEs
would lead us far afield of the present discussion.  We refer to the works of 
\cite{MR1838755, MR3390404, yamamoto,
MR2679365, MR2852213, MR3338319,
MR2589483, MR1810529, MR3203775,
SJPKons, smoothbranchesJBJPKons, JPbredenvanicat, AlJPJay}
for fuller discussion of computer assisted proof for 
equilibrium solutions of PDEs, and  
also \cite{MR3196951, matsuePDE, MR2874050, MR2852213, robertoEigPaper}
for more discussion of techniques for validated computation 
of eigenvalue/eigenvector pairs for infinite dimensional problems.
Let us also mention the review articles of 
\cite{MR3444942, jayKonsReview,MR1420838} and the book 
of \cite{MR2807595} for broader overview of the field.  
While the list of references given above is far from exhaustive
(in particular the list ignores the growing literature on computer
assisted proof for periodic orbits of PDEs), 
it is our hope that these works and the references discussed therein
could help the interested reader wade into the literature. 
}
\end{remark}

\begin{remark}[Computer assisted proof for unstable manifolds in finite dimensions] 
\label{rem:oldWork}{\em
It must also be noted that the present work builds on a growing body of 
literature devoted to validated numerical methods for studying stable/unstable 
manifolds of equilibrium solutions for finite dimensional vector fields.  
A thorough review is beyond the scope of the present work, and we direct the 
reader to  
\cite{MR3281845, MR3022075, MR2494688, cosy1, JBJPJayKons, MR2773294, 
MR3443692, MR3032848, Jayparameters, parmResPaper, MR3437754, MR2902618}
for more complete discussion of the literature.  This list
ignores works devoted to validated numerical methods for stable/unstable 
manifolds of discrete time dynamical systems and also validated methods for computing other 
types of invariant manifolds (for example invariant tori and their stable/unstable manifolds).
Again, we refer to the review articles mentioned in 
Remark \ref{rem:CAP_PDE}.
}
\end{remark}

\subsection{A family of examples}
In order to minimize the proliferation
of notational difficulties, we consider a fixed specific class of 
scalar parabolic equations.

More precisely, assume the PDE is of the form 
\begin{equation}\label{eq:gen_PDE}
u_t = Au + \displaystyle\sum_{n = 1}^{s}c_n(x) u^n ,\quad u = u(x,t)\in\mathbb{R}, \quad(x,t)\in I\times \mathbb{R}_+
\end{equation}
where $I\subset \mathbb{R}$ is a compact interval, $A$ is a parabolic differential operator, $s$ is the order of the nonlinearity and $c_n(x)$ are the smooth coefficient functions possibly depending on the spatial variable $x$. Using an orthonormal basis corresponding to the eigenfunctions of $A$ for the particular domain and boundary conditions we translate \eqref{eq:gen_PDE} into a countable system of ODEs.\\

The resulting system of ODEs, projected onto the eigenbasis, is of the form  
\begin{equation}\label{eq:gen_ODEsys}
a_k'(t) = \mu_k a_k + \displaystyle\sum_{n = 1}^{s} \displaystyle\sum_{\stackrel{\sum k_i = k}{k_i\in\mathbb{Z}}}(c_n)_{|k_1|}a_{|k_2|}\cdots a_{|k_{n+1}|}\bydef g_k(a) \quad k\geq 0
\end{equation}
where $\mu_k$ are the eigenvalues of $L$  and $a = (a_k)_{k\geq 0}$ are the expansion coefficients of $u$  in the respective eigenbasis. We use the shorthand notation $a' = g(a)$ for \eqref{eq:gen_ODEsys}.  To define the unstable manifold we are interested in, assume $\tilde{a}$ to be given such that $g(\tilde{a}) = 0$. Its unstable manifold is given by
\begin{equation}
W^{u}(\tilde{a}) = \{a_0 : \exists \text{ solution $a(t)$ of \eqref{eq:gen_ODEsys} } :
a(0) = a_{0}\quad \displaystyle\lim_{t\to-\infty}a(t) = \tilde{a}\}.
\end{equation}
It is a classical fact that for scalar parabolic
PDEs of the form \eqref{eq:gen_PDE}, $W^{u}(\tilde a)$ is a finite dimensional manifold \cite{sellyu}.

As a concrete application consider  
the boundary value problem for the following reaction diffusion equation on a 
one-dimensional bounded spatial domain with Neumann boundary conditions: 
\begin{equation}\label{eq:Fishers}
\begin{aligned}
u_{t} &= u_{xx} + \alpha u(1- c_2(x) u), \quad (x,t)\in[0,2\pi]\times \mathbb{R},\\
&u_{x}(0,t)= u_{x}(2\pi,t) = 0\quad \forall t\geq 0 
\end{aligned}
\end{equation}
Here $\alpha>0$ is a real parameter and $c_2(x)>0$ is a spatial inhomogeneity.  We consider both the case $c_2(x) = 1$ and $c_{2}(x)$ non-constant, specifically a Poission kernel. For notational convenience we drop the index $2$ and refer to the spatial inhomogeneity as $c(x)$. Moreover the parameter $\alpha$ has the role of an eigenvalue parameter to consider different dimension configuration of  the unstable manifolds at hand.   The equation is known as Fisher's equation, or as the Kolmogorov-Petrovsky-Piscounov equation,
and has applications in mathematical ecology, genetics, and the theory of 
Brownian motion \cite{MR1423804, MR639998, MR0400428}.

\subsection{Methodology of the present work: sketch of the approach}\label{sec:methods}
Let $\tilde a$ be an equilibrium solution of  \eqref{eq:gen_ODEsys} 
with known Morse index and eigendata. More 
precisely suppose that $Dg(\tilde a)$ has exactly
$d$ unstable eigenvalues $\tilde{\lambda}_j$.
In the present work we assume that the unstable eigenvalues
are real, and that each has multiplicity one.  Then let $\tilde{\xi}_{j}$, $1 \leq j \leq d$
denote an associated choice of unstable eigenvectors,
i.e. assume that 
\begin{equation}\label{eq:eigs_seq}
Dg(\tilde{a})\tilde{\xi}_j = \tilde{\lambda}_j \tilde{\xi}_j \quad j = 1,\ldots,d.
\end{equation} 
In practice the first order data is not explicitly given, 
and we perform a sequence of preliminary computer
assisted proofs in order to verify that the assumptions 
are satisfied.  We refer the reader again to the references mentioned in 
Remark \ref{rem:CAP_PDE} above, and also to Sections 
\ref{sec:nonlinearAnalysis} and \ref{sec:appl} of the present work for more 
refined discussion of these preliminary considerations.

We are now ready to give an informal description of the Parameterization Method for 
unstable manifolds.  See also \cite{jpJordiMarcioRafaPaper}.
Let 
\[
\mathbb{B}_1 := \{(\theta_1, \ldots, \theta_d) \in \mathbb{R}^d \colon
|\theta_j| < 1, 1 \leq j \leq d \}.
\]
We seek solutions of the functional equation 
\begin{equation}\label{eq:func_eq}
g(P(\theta_1, \ldots, \theta_d)) =
\tilde \lambda_1 \theta_1 \frac{\partial}{\partial \theta_1} P(\theta_1, \ldots, \theta_d)
+ \ldots +
\tilde \lambda_d \theta_d 
\frac{\partial}{\partial \theta_d} P(\theta_1, \ldots, \theta_d),
\end{equation}
for all $\theta = (\theta_1, \ldots, \theta_d) \in \mathbb{B}_1$
satisfying the linear constraints
\begin{subequations}\label{eq:linear_constraints}
\begin{alignat}{1}
P(0) &= \tilde{a}\\
 \frac{\partial}{\partial \theta_j}P(0) &= \tilde{\xi}_j,
 \quad \mbox{for } 1 \leq j \leq d.
\end{alignat}
\end{subequations}
Note that Equation \eqref{eq:func_eq} is actually a 
Banach spaced valued partial differential equation (or a system of infinitely many
scalar partial differential equations when the Banach space is a sequence space).
We refer to Equation \eqref{eq:func_eq} as \textit{the invariance equation},
and note that it is expressed more concisely as
\begin{equation} \label{eq:func_eq_highLevel}
(g \circ P)(\theta) = DP(\theta) A_u \theta.
\end{equation}
Here $A_u$ is the $d \times d$ diagonal matrix of unstable eigenvalues.  
Equation \eqref{eq:func_eq_highLevel} makes it clear that 
the vector field  $g$ is tangent to the image of $P$, i.e. $P$ parameterizes an invariant 
manifold.  
Indeed we have the following lemma, which makes precise the claim that 
$P$ recovers the dynamics on the manifold.
\begin{lemma} \label{lem:flowInv}
Assume that $P$ solves \eqref{eq:func_eq} and
satisfies the first order constraints of \eqref{eq:linear_constraints}. 
Then for every $\theta\in\mathbb{B}_1$ the function 
\begin{equation}
a(t) = P(\exp(A_ut)\theta)
\end{equation}
solves $a' = g(a)$ for all $t\in(-\infty,T(\theta))$ for a positive time 
$T(\theta)$. In particular $\displaystyle\lim_{t\to-\infty}a(t) = \tilde{a}$.
\end{lemma}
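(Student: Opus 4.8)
The plan is to verify the claimed identity by a direct chain-rule computation, using the invariance equation \eqref{eq:func_eq_highLevel} as the key input, and then to dispatch the domain and asymptotic statements using elementary properties of the linear flow $\exp(A_u t)$. Throughout I assume, as is part of the functional-analytic setup for \eqref{eq:func_eq} developed elsewhere in the paper, that $P$ is continuously (Fr\'echet) differentiable from a neighborhood of $\mathbb{B}_1$ into the sequence space on which $g$ acts, and that $g$ is continuous there.

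First I would fix $\theta \in \mathbb{B}_1$ and introduce the curve $\phi(t) = \exp(A_u t)\theta$ in $\mathbb{R}^d$. Since $A_u = \mathrm{diag}(\tilde\lambda_1,\ldots,\tilde\lambda_d)$ with all $\tilde\lambda_j > 0$, we have $\phi(t) = (e^{\tilde\lambda_1 t}\theta_1,\ldots,e^{\tilde\lambda_d t}\theta_d)$, so $|\phi_j(t)| = e^{\tilde\lambda_j t}|\theta_j|$. For $t \leq 0$ this is bounded by $|\theta_j| < 1$, hence $\phi(t) \in \mathbb{B}_1$ for all $t \leq 0$; and since $\mathbb{B}_1$ is open and contains $\phi(0) = \theta$, continuity of $\phi$ furnishes a maximal time $T(\theta) > 0$ with $\phi(t) \in \mathbb{B}_1$ for $t < T(\theta)$. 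Explicitly one may take $T(\theta) = \min_{j:\theta_j \neq 0} \tilde\lambda_j^{-1}\log(1/|\theta_j|) \in (0,+\infty]$, with $T(0) = +\infty$; the point of recording this formula is only to confirm that $T(\theta)$ is genuinely positive.

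Next, on $t \in (-\infty, T(\theta))$ the composition $a(t) = P(\phi(t))$ is well defined, and the chain rule gives
\[
a'(t) = DP(\phi(t))\,\phi'(t) = DP(\phi(t))\,A_u\,\phi(t) = DP(\phi(t))\,A_u\bigl(\exp(A_u t)\theta\bigr).
\]
I would then invoke the invariance equation \eqref{eq:func_eq_highLevel} at the point $\phi(t) \in \mathbb{B}_1$, namely $(g\circ P)(\phi(t)) = DP(\phi(t))\,A_u\,\phi(t)$, to conclude $a'(t) = g(P(\phi(t))) = g(a(t))$, which is precisely $a' = g(a)$. For the asymptotics: since each $\tilde\lambda_j > 0$, each component $e^{\tilde\lambda_j t}\theta_j \to 0$ as $t \to -\infty$, so $\phi(t) \to 0$ in $\mathbb{R}^d$; by continuity of $P$ at $0$ and the first-order constraint $P(0) = \tilde a$ from \eqref{eq:linear_constraints}, we obtain $\lim_{t\to-\infty} a(t) = P(0) = \tilde a$.

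The computation itself is routine; the nearest thing to an obstacle is the bookkeeping, on two fronts: (i) ensuring the chain rule is legitimate in the Banach-space-valued setting — i.e. that $P$ is $C^1$ into a sequence space on which the polynomial nonlinearity $g$ is well defined and continuous, so that both sides of \eqref{eq:func_eq_highLevel} make sense pointwise along $\phi(t)$; and (ii) pinning down $T(\theta)$ so that $\phi(t)$ demonstrably remains in the open set $\mathbb{B}_1$ where the invariance equation has been assumed to hold. Neither is deep, but both must be stated carefully for the argument to be airtight.
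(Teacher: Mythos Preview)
Your proposal is correct and follows exactly the approach indicated in the paper: a direct chain-rule computation using the invariance equation \eqref{eq:func_eq_highLevel} together with $\mathrm{real}(\tilde\lambda_j)>0$, which the paper does not write out in detail but instead cites as elementary. Your explicit handling of the domain interval $(-\infty,T(\theta))$ and the limit $t\to-\infty$ via continuity of $P$ at $0$ is precisely what is intended.
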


\noindent The proof is obtained by direct computation using that $\text{real}(\lambda_i)>0$ for $i = 1,\ldots,d$
(see Lemma 2.1 in \cite{parmSlowVectBundles} and Lemma 2.6 in \cite{parmResPaper} for 
elementary proofs in finite dimensional contexts).

In order to obtain an approximate solution of Equation \eqref{eq:func_eq}
we adopt the power series ansatz
\begin{equation}\label{eq:pow_ans}
P(\theta) = \sum_{|m| = 0}^{\infty} p_m \theta^m.
\end{equation} 
Here $m\in\mathbb{N}^d$ is a multi-index, $\theta^m := \theta_1^{m_1}\cdots\theta_d^{m_d}$,
and $|m| = m_1 + \ldots + m_d$.
Plugging \eqref{eq:pow_ans} into \eqref{eq:func_eq}
and matching like powers of $\theta$ leads to a system of 
infinitely many coupled nonlinear equations for the Taylor coefficients
$\{p_m\}_{m \in \mathbb{N}^d}$.  The details are given in Section
\ref{sec:homological}, in particular see Equation \eqref{eq:homologicalEqnBanachAlg}.

Truncating leads to a system of finitely many coupled nonlinear
scalar  equations which are solved (for example) by a numerical Newton method,
leading to  approximate
Taylor coefficients $\{\bar p_m\}_{|m| = 0}^M$ where $p_m \in \mathbb{R}^K$ 
for each $0 \leq |m| \leq M$.    
The numerical procedure is discussed in Section \ref{sec:zero_fin}, with 
application in Section \ref{sec:appl}.  After this computation we have a  
finite dimensional approximate parametrization of the form
\begin{equation}\label{eq:fin_map}
P^{MK}(\theta) = \displaystyle\sum_{m\in\mathcal{F}_M}\bar{p}_m\theta^m.
\end{equation}

\begin{remark}[Rescalings] \label{rem:rescalings}{\em
The choice of domain $\mathbb{B}_1$ deserves some explanation.  
We will see in Section \ref{sec:ParmMethod} that solutions of Equation
\eqref{eq:func_eq} are unique up to the choice of the eigenvectors
$\xi_1, \ldots, \xi_j$, so that rescaling the eigenvectors leads to 
parameterizations of larger or smaller local portions of the unstable manifold.
On the other hand, one can imagine controlling the size of the 
local portion by fixing the scalings of the eigenvectors 
(say with unit norm) and varying instead the size of the domain of $P$.  
The two approaches are dynamically equivalent.  Nevertheless rescaling the 
eigenvectors allows us to control also the decay rate of the Taylor coefficients
of $P$, and this stabilizes the problem numerically.  
Hence we fix once and for all the domain $\mathbb{B}_1$,
and ask in a particular problem ``what is the best choice of the scalings
for the eigenvectors?''  The answer depends on the problem at hand
and is only answered after some numerical calculations.  We return to 
this question in Section \ref{sec:appl}}.
\end{remark}

We now come to the question: how good is this approximation?
Define the defect or \textit{a-posteriori error} for the problem by
\[
\epsilon_{MN} := \sup_{\theta \in \mathbb{B}_1} 
\| g[P^{MN}(\theta)] - D P^{MN}(\theta) A_u \theta\|,
\]
in an appropriate norm to be specified later (in practice we also
 ``pre-condition'' the defect with a smoothing approximate inverse.  See 
Section \ref{sec:nonlinearAnalysis}). 
Our task is to establish sufficient conditions, depending on 
$g$, $\tilde a$, $\tilde{\lambda}_1, \ldots, \tilde{\lambda}_d$, $P^{MN}$, and the underlying  
 Banach space, so that 
$\epsilon_{MN} \ll 1$ implies the existence of a true solution 
$P$ of Equation \eqref{eq:func_eq} on $\mathbb{B}_1$. 
In fact our argument will show that
\begin{equation}\label{eq:uniform_bound}
\displaystyle\sup_{\theta\in\mathbb{B}_1}\|P(\theta)-P^{MK}(\theta)\|\leq r_{P}, 
\end{equation}
where $P$ is the true solution of Equation \eqref{eq:func_eq}, and 
the explicit value of $r_P$ (which depends on the defect) 
comes out of our argument.  It is critical that we formulate
sufficient conditions which can be checked via 
carefully managing floating point computations.
Floating point checks of the hypotheses employ \textit{interval arithmetic}
in order to guarantee that we obtain mathematically rigorous results 
\cite{MR0231516, RumpIntlab, MR2807595}.
 
In order to formulate the sufficient conditions just discussed
we implement a computer assisted a-posteriori scheme
which has its roots in the the seminal work of 
\cite{MR648529, MR727816} on the Feigenbaum conjectures.
We study the equation 
\[
f(P(\theta)) = (g \circ P)(\theta) - DP(\theta) A_u \theta = 0,
\]
via a modified Newton-Kantorovich argument. 
More precisely, we show that the ``Newton-like'' operator 
\[
T(P) := P - A f(P),
\]
is a contraction on a  ball of radius $r_P$ in the Banach space
about the approximation solution $P^{MN}$. Here $A$ is a problem dependent 
approximate inverse of $Df(P^{MN})$, which we choose based on both numerical and analytic considerations.
The argument is formalized in Section 
\ref{sec:nonlinearAnalysis},  with implementation discussed in Section \ref{sec:appl}.

\subsection{Discussion}\label{sec:discussion}

In practice we learn that the argument outlined in Section \ref{sec:methods}
succeeds or fails only after attempting the a-posteriori validation, 
and these attempts are  computationally expensive.  
With this in mind we include the a-priori Theorem \ref{thm:smt}
in Section \ref{sec:aPrioriExist}.
The theorem says that, given some mild non-resonance
conditions between the unstable eigenvalues (assumptions which are made precise in 
Definition \ref{def:nonRes}) there exist choices of eigenvectors so that 
Equation \eqref{eq:func_eq} has a solution.  The solution is unique up to the 
choice of the eigenvectors, but a-priori may parameterize only a small portion
of the local unstable manifold near the equilibrium.  Our Theorem is an infinite 
dimensional generalization of the results in Section $10$ of \cite{param3}.

The proof of the Theorem \ref{thm:smt} assures existence only
for a small enough choice of the scalings of the unstable eigenvectors.
On the other hand, in Section \ref{def:nonRes} we see that the 
size of the local manifold parameterized by $P$ is determined 
in a rather explicit way by the size of these scalings.  
In applications we would like 
to choose the eigenvector scalings as large as possible, so that we 
learn more about the unstable manifold far from the equilibrium 
solution.  This desire must be weighed against the fact that 
for larger choices of the scalings we 
risk loosing control of the convergence of the series.

Viewed in this light, the tools of the present work provide a 
mathematically rigorous computer assisted method for pushing
the existence results as far as possible in specific applications.
Since the desired parameterization exists in a small enough neighborhood of 
the equilibrium solution by Theorem \ref{thm:smt}, our argument has a ``continuation'' flavor  
(the continuation parameters being the eigenvector scalings, which 
in turn govern the size of the local unstable manifold in phase space).
The novelty is that we care only about rigorous results at the end of
the continuation.  We argue in 
Sections \ref{sec:ParmMethod} and \ref{sec:appl} that expensive
validation computations can be postponed until we are all but certain 
the computer aided proof will succeed.  
These considerations are also discussed at length for finite dimensional
vector fields in \cite{MR3437754}.

\begin{remark}[Extensions of the a-priori results]\label{rem:extnesionsIntro}
{\em
A word about the technical assumptions of Theorem \ref{thm:smt}.  
In addition to the non-resonance conditions 
postulated in Definition \ref{def:nonRes},
Theorem \ref{thm:smt} postulates that each of the 
unstable eigenvalues is real and that each has multiplicity one.
Moreover we assume that the nonlinearity is given by an analytic function.  

We remark that these assumptions are technically convenient, 
and should not be interpreted as fundamental restrictions.  For example 
real invariant manifolds associated with complex conjugate eigenvalues 
are treated exactly as discussed in \cite{MR3207723}.
Indeed it is possible to remove completely the 
non-resonance and multiplicity conditions, as long as there are ``spectral gaps''
(eigenvalues bounded away from the imaginary axis). 
The necessary modification is to conjugate the parameterization to a polynomial,
rather than a linear vector field. The reader interested in this technical extension
could consult the work of \cite{parmResPaper} for the case of finite dimensional 
vector fields, and the work of \cite{param1} for the case of infinite dimensional
maps.

One can also ease the regularity requirements,
and assume only that the nonlinearities
are only $C^k$ rather than analytic as in \cite{param1, param2}.
This however changes substantially the flavor of the 
validation scheme developed here, which exploits analyticity
in a fundamental way.
More precisely, the analyticity of the nonlinearity 
allows us to look for analytic parameterizations, which in turn 
allows us to study the Taylor coefficients in  a Banach space of 
rapidly decaying infinite sequences.
If instead we expand the manifold as a finite Taylor polynomial plus a unknown 
remainder function, then the a-posteriori analysis for the remainder 
function must be carried out in function space.

Let us also mention that, following the work of \cite{param1, param2, param3},
it might be possible to extend the methods of the present work 
to problems with continuous spectrum such, as PDEs on unbounded domains.
The extensions remarked upon above are not considered further in the present work.
}
\end{remark}

\begin{remark}[Extension of the formal series results: non-polynomial nonlinearities
and systems of scalar parabolic PDE] {\em
While Theorem \ref{thm:smt} is formulated
for general analytic nonlinearities, the 
formal solution of Equation \eqref{eq:func_eq} developed in Section
\ref{sec:homological} is derived under the further assumption of  
polynomial nonlinearity.  This is not as restrictive as it might seem upon first
glance, as transcendental nonlinearities given by elementary functions
can be treated using methods of automatic differentiation.

A thorough discussion of automatic differentiation as a tool for 
semi-numerical computations and computer
assisted proof is beyond the scope of the present work, and we 
refer the interested reader to the books 
\cite{MR3077153, MR2807595} and also to the work of 
\cite{alexADmanuscript, autoDiffFourier} as an entry point
to the literature.  In terms of the present discussion the relevant point is that 
automatic differentiation allows us to develop formal series evaluation of 
non-polynomial nonlinearities by appending additional differential equations.
When the nonlinearity is among the so called ``elementary functions
of mathematical physics'' the appended equation is polynomial,
and we end up with a system of scalar parabolic PDEs with polynomial nonlinearities.

Extending the methods of the present work to such systems will make
an interesting topic for a future study.  Such a study might
treat automatic differentiation and non-polynomial nonlinearities 
as an application, but could also discuss unstable manifolds for 
systems of reaction diffusion equations
in general.  These topics are not pursued further in the present work.  
}
\end{remark}

\begin{remark}[Spectral versus finite element bases]
{\em 
A more fundamental limitation of the present work is that, 
when it comes to the implementation details in Section \ref{sec:appl}, we restrict 
our attention to the case of spectral bases 
for the spatial dimension of the PDE.  
This allows us to exploit a sequence space analysis 
which is very close to the underlying numerical methods,
and which for example avoids the use of Sobolev inequalities and interpolation 
estimates.  However such sequence space implementation is only 
possible on simple domains where the eigenfunction expansion of the 
linear part of the PDE is explicitly known.

An useful and nontrivial extension of the techniques of the present work
would be to implement an a-posteriori argument  for the Parameterization Method for 
unstable manifolds of parabolic PDEs using finite element basis.  
In such a scheme the sequence space calculus exploited in the present work
would be replaced classical Sobolev theory.  
We believe that this extension is both natural and plausible,
and for this reason we frame the a-priori existence results 
and discussion of formal series in Section \ref{sec:ParmMethod} 
in the setting of  a general Banach algebra.   
}
\end{remark}

\begin{remark}[The role of a-priori spatial regularity]
{\em
It is worth noting that, strictly speaking, we do not need to know a-priori the regularity 
of the unstable manifold: rather this is a convenience.   Indeed, if our method succeeds then we obtain 
regularity results a-posteriori.  In practice it is helpful to have 
an  ``educated guess'' concerning the regularity of the 
manifold, as this informs the choice of norm in which to frame the computer assisted proof.
For more nuanced discussion of computer assisted proof in 
sequence spaces associated with functions
in weaker regularity classes we refer to \cite{ck_analytic}.

This suggests another interesting direction of future study, namely to extend the
 methods of the present work to infinite dimensional settings such as state 
dependent delays, where even the a-priori existence of solutions is often in question.
More discussion of the computer as a tool for 
studying breakdown of regularity of invariant objects can be found in 
the works of \cite{MR2672635, MR2220541, MR3082311, MR2967458}.
}
\end{remark}

\begin{remark}[Implementation] \label{rem:implementation}
{\em
We provide full implementation details 
for the example of a spatially inhomogeneous Fisher equation. 
The implementation is discussed in Section \ref{sec:appl}, and 
involves the derivation of a number of problem dependent 
estimates.  These estimates are then used in order to show that the 
Newton like operator is a contraction in a neighborhood of  
our numerical approximation.  

We choose not to suppress the derivation of these bounds 
for two reasons.  The first is that their inclusion
gives the present work a degree of plausible reproducibility.
Indeed the estimates in Section \ref{sec:appl} can be 
viewed (more or less) as pseudo-code for the computer programs which validate
our approximation of the unstable manifold.  The second reason is 
that including a few pages of estimates
makes entirely transparent the role played by the computer in our arguments. 
One sees that (after the initial stage of numerical approximation)
the computer is primarily used to add and multiply long 
lists of floating point numbers.  If the results satisfy certain 
completely explicit inequalities then 
we have our proof.
}
\end{remark}          

\begin{remark}[Computer assisted existence proofs for connecting orbits of parabolic PDEs]
{\em
As already suggested in the introduction, our primary motivation for validated 
computation of local unstable manifolds is our interest in global dynamics, 
for example heteroclinic connecting orbits between equilibrium solutions 
of parabolic PDE.  In order to demonstrate that the methods of the present work 
are of value in this context, we prove in Section \ref{sec:connorbit} the existence
of a saddle-to-sink connecting orbit for a Fisher equation. 
More precisely we establish the existence of an
orbit which connects an equilibrium solution 
of finite non-zero Morse index to a fully stable equilibrium solution.  

Nevertheless, we want to be clear that the computer assisted existence proof
in Section \ref{sec:connorbit} is a ``proof of concept'', and there remains much work 
to be done if one wants to develop general computer assisted analysis 
for transverse connecting orbits for PDEs. In particular, 
the computations in Section \ref{sec:connorbit}
establish connections only for orbits asymptotic to a sink, i.e. 
we compute explicit lower bounds on 
the size of an absorbing neighborhood of the stable equilibrium state, and then 
we simply check that our parameterized local manifold enters this neighborhood.

In general one has to contend with saddle-to-saddle connections, in which case a 
more subtle analysis of the (non-zero co-dimension) stable manifold is needed.
The non-resonance conditions required for the Parameterization 
 Method seem to rule out the study of finite co-dimension manifolds in infinite 
 dimensional problems.  Nevertheless, 
error bounds for the stable manifold could be obtained by 
implementing the geometric methods  of    
\cite{MR2494688, MR2784613, piotrCOdraft, jacekCOpaper}, 
or adapting the functional analytic approach of \cite{infMapConnections}
to the setting of parabolic PDEs. 
We refer to the work of  \cite{infMapConnections} for more 
discussion of computer assisted existence proofs for saddle-to-saddle connections
in infinite dimensions (though  \cite{infMapConnections} treats explicitly
only the case of infinite dimensional maps).

We also remark that in general it is not enough to establish the 
existence of a ``short-connection'' as we do in Section  \ref{sec:connorbit}.
By a short connection we mean a connecting orbit which is described using only 
parameterizations of the local stable and unstable manifolds (this terminology is 
discussed further in \cite{MR3207723}).  Instead, the 
typical situation is that the rigorously validated local unstable and stable manifolds 
do not intersect.  In this case a computer assisted existence proof for a connecting orbit 
requires the solution of a two point boundary value problem 
whose solution is an orbit segment beginning 
on the unstable and ending on the stable manifold.
We refer to the 
works of \cite{MR2174417, MR2681634, MR2271217,
MR2302059, MR3097025, MR2679365, MR3281845, MR2388394,
MR3207723,hexroll, JBJPJayKons}
for more discussion of computer assisted proof of 
saddle-to-saddle connections in finite dimensional problems.

Finally we direct the interested reader to the work of 
\cite{piotrCOdraft, jacekCOpaper}, where another approach to computer assisted 
proof for connecting orbits in parabolic PDE is given.
The computer aided proofs of connecting orbits in the references just cited
are similar in spirit to those developed in the present work, 
with at least one important difference.  
While the authors of the work just cited also follow
trajectories on a local unstable manifold until they enter a trapping region 
of a sink, the orbit is propagated via mathematically rigorous
numerical integration of the PDE.  Then their approach could be used to 
study substantially longer connecting orbits than those obtained 
with our proof of concept in Section  \ref{sec:connorbit}.
More thorough discussion of rigorous integration of PDEs can be found in 
the works of \cite{MR2788972, MR3167726, MR2728184}.

At the same time, we remark that the authors of \cite{piotrCOdraft, jacekCOpaper} 
represent the unstable 
manifold locally using a linear approximation and obtain validated 
error bounds on this approximation via geometric arguments
based on cone conditions.
An interesting avenue of future study might be to combine
the high order methods of the present work
with methods for rigorous numerical integration of parabolic PDEs as developed in 
\cite{piotrCOdraft, jacekCOpaper, MR2788972, MR3167726, MR2728184} in order to
prove the existence of connecting orbits in more challenging applications.

}
\end{remark}
                                                                                                                                                                                                                                                                    
The paper is organized as follows. First in Section \ref{sec:nands} we discuss the Banach spaces we will be working on.  In Section \ref{sec:nonlinearAnalysis} we discuss the method we use to validate the solution to a zero finding problem $f(x) = 0$ together with the analysis of the linear eigendata of $Df(x)$. Specifically in Section \ref{sec:nonlinearAnalysis}
 we discuss the radii polynomial method from \cite{SJPKons}.  In Section \ref{subsec:morse_index} we demonstrate how to make sure the accurate Morse index of $Df(x)$ is obtained given an approximate derivative $A^{\dag}$ whose spectrum is understood completely. In Section \ref{sec:zero_fin}  we describe how to setup a zero finding problem whose solution corresponds to the power series coefficients of a parametrization of the unstable manifold of a hyperbolic fixed point. In Section \ref{sec:appl} we showcase our method in examples. We discuss Fisher's equation  from \eqref{eq:Fishers}. In Section \ref{sec:firstorderdata} we describe how to compute and validate the first order data at an equilibrium in the specific example, including the validation of the eigendata and the Morse index. In Section  \ref{subsec:cmanifold} we compute a one and two dimensional unstable manifold for a non-trivial equilibrium and the origin respectively.  In Section \ref{sec:connorbit}  we discuss the computer-assisted proof of a short connecting orbit from a fixed point of Morse index 1 to Morse index 0.

 All computer programs used to obtain the results in this work are freely available at the 
 papers home page \cite{parmPDEcode}.

\section{Background} \label{sec:background}

\subsection{A-posteriori analysis for nonlinear operators} \label{sec:nonlinearAnalysis}

Let $X$ and $X'$ be Banach spaces and $f \colon X \to X'$
be a smooth map.  Throughout the sequel we are interested in the
zero finding problem
\[
f(x) = 0.
\]
Suppose we have in hand an approximate solution $\bar x$. 
In our context $\bar x$ is usually the result of a numerical computation.
Our goal is to prove that there exists a true solution nearby.

To this end let $A$ be an injective (one-to-one) bounded linear operator having that
\[
A f(x), ADf(x) \in X
\]
for all $x  \in X$.  
Heuristically we think of $A$ as being a smoothing 
approximate inverse for $Df(\tilde x)$.  In other words, 
we ask neither that $f(x)$ is a self map or 
that $Df(x)$ is a bounded linear operator. Rather, we allow that
$f$ and $Df(x)$ may be unbounded operators and that
 $A$ ``smooths'' $f$ and $Df$, bringing the composition back into $X$.

Now we define the Newton-like operator 
\[
T(x) = x - Af(x),
\]
and note that the fixed points of $T$ are in one-to-one correspondence with  
the zeros of $f$.  
We use the Banach Fixed Point Theorem on a ball  of  radius 
$r$ around an approximate solution $\bar x$ to show that
that $T$ has a unique fixed point in said ball. 

Our approach follows that of \cite{yamamoto}, in that 
we consider the radius $r$ as one of our unknowns, and find a suitable range of 
radii such that $T$ is a contracting selfmap on the corresponding balls
(rather than guessing a value for the radius $r$ 
and applying the  Newton-Kantorovich Theorem).
This is referred to by some authors as the radii-polynomial approach, 
and in addition to the work just cited we
refer the interested reader also to 
the work of \cite{SJPKons, smoothbranchesJBJPKons}
and the references discussed therein.

\begin{definition}\label{def:YZp} \textbf{$Y$, $Z$-bounds and the radii polynomial }\\
Recall the fixed point operator $T$ specified in \eqref{eq:T} corresponding to the zero finding
 map \eqref{eq:f}. Assume an approximate zero $\bar{x}$ to be given. Let us define the following 
 bounds $Y\in\mathbb{R}$ and $Z(r)\in\mathbb{R}[r]$:
\begin{enumerate}
\item the $Y$-bound $Y\in\mathbb{R}$ measuring the residual: 
\begin{equation}\label{eq:Y}
\|T(\bar{x})-\bar{x}\|\leq Y
\end{equation}

\item the $r$-dependent $Z$-bound measuring the contraction rate on a ball of (variable) radius $r$:
\begin{equation}\label{eq:Z}
\sup_{u,v\in\mathbb{B}_{1}}\|DT(\bar{x}+ru)rv\|\leq Z(r)
\end{equation}
\end{enumerate}
Define the polynomial 
\begin{equation}\label{eq:rad_pol}
\beta(r) = Y+Z(r)-r.
\end{equation}
\end{definition} 
The benefit of Definition \ref{def:YZp} is  the following Lemma.

\begin{lemma}\label{eq:radpol_lem}
Assume an approximate zero $\bar{x}$ of $f$ defined in \eqref{eq:f} to be given. If $\beta(r)<0$ with $\beta$ defined in \eqref{eq:rad_pol} for a positive radius $r_{\bar{x}}$, then $T$ given by \eqref{eq:T} is a contraction on $\mathbb{B}_{r_{\bar{x}}}(\bar{x})$. Hence there is a unique zero $\tilde{x}$ with $\tilde{x}\in \mathbb{B}_{r_{\bar{x}}}(\bar{x})$.
\end{lemma}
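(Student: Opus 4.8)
The plan is to verify that, under the hypothesis $\beta(r_{\bar x}) < 0$, the Newton-like operator $T$ satisfies the two hypotheses of the Banach Fixed Point Theorem on the closed ball $\mathbb{B}_{r_{\bar x}}(\bar x)$: namely that $T$ maps this ball into itself, and that $T$ is a contraction on it. Both facts should follow from the $Y$- and $Z$-bounds of Definition \ref{def:YZp} together with a mean value / fundamental theorem of calculus estimate for $T$. First I would recall that the fixed points of $T$ are precisely the zeros of $f$ (using injectivity of $A$), so that a unique fixed point of $T$ in the ball gives a unique zero $\tilde x$ of $f$ in the ball, which is the conclusion.

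For the self-map property, take any $x = \bar x + r_{\bar x} u$ with $u \in \mathbb{B}_1$. Writing
\[
T(x) - \bar x = \bigl(T(x) - T(\bar x)\bigr) + \bigl(T(\bar x) - \bar x\bigr),
\]
I would bound the second term by $Y$ using \eqref{eq:Y}, and the first term by integrating the derivative along the segment from $\bar x$ to $x$:
\[
T(x) - T(\bar x) = \int_0^1 DT(\bar x + t r_{\bar x} u)\, r_{\bar x} u \, dt,
\]
so that $\|T(x) - T(\bar x)\| \le \sup_{t \in [0,1]} \|DT(\bar x + t r_{\bar x} u)\, r_{\bar x} u\| \le Z(r_{\bar x})$ by \eqref{eq:Z} (since $tu \in \mathbb{B}_1$). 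Hence $\|T(x) - \bar x\| \le Y + Z(r_{\bar x})$, and the hypothesis $\beta(r_{\bar x}) = Y + Z(r_{\bar x}) - r_{\bar x} < 0$ gives $\|T(x) - \bar x\| < r_{\bar x}$, so $T$ maps $\mathbb{B}_{r_{\bar x}}(\bar x)$ into itself.

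For the contraction property, take $x_1, x_2 \in \mathbb{B}_{r_{\bar x}}(\bar x)$ and write $x_2 - x_1 = r_{\bar x} v$ for some $v$ with $\|v\| \le 1$ (rescaling; note the relevant points stay in the ball since it is convex). Then
\[
T(x_2) - T(x_1) = \int_0^1 DT\bigl(x_1 + t(x_2 - x_1)\bigr)\, r_{\bar x} v \, dt,
\]
and every point $x_1 + t(x_2-x_1)$ lies in $\mathbb{B}_{r_{\bar x}}(\bar x)$, hence can be written as $\bar x + r_{\bar x} u$ with $\|u\| \le 1$; applying \eqref{eq:Z} gives $\|T(x_2) - T(x_1)\| \le Z(r_{\bar x}) \le Z(r_{\bar x}) + Y < r_{\bar x} = \|x_2 - x_1\|/\|v\|$... more carefully, $\|T(x_2)-T(x_1)\| \le Z(r_{\bar x})$ while $\|x_2 - x_1\| = r_{\bar x}\|v\|$, and one wants $\|T(x_2)-T(x_1)\| \le \kappa \|x_2-x_1\|$ with $\kappa < 1$. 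Here I would use that $Z$ is a polynomial with $Z(0) \ge 0$ and, by $\beta(r_{\bar x}) < 0$, $Z(r_{\bar x})/r_{\bar x} < 1 - Y/r_{\bar x} \le 1$; the homogeneity of the bound in \eqref{eq:Z} (it controls $DT(\bar x + r u)(rv)$, linear in the $rv$ slot) lets one conclude $\|DT(x)\| \le Z(r_{\bar x})/r_{\bar x} =: \kappa < 1$ uniformly on the ball, giving the contraction constant.

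The main obstacle — really the only subtle point — is handling the derivative bound \eqref{eq:Z} correctly: it is stated in the scaled form $\sup_{u,v}\|DT(\bar x + ru)rv\| \le Z(r)$, so one must be careful that (i) the segments used in the mean value estimates stay inside $\mathbb{B}_{r_{\bar x}}(\bar x)$ (true by convexity), and (ii) the displacement $x_2 - x_1$ in the contraction estimate has norm at most $2r_{\bar x}$, not $r_{\bar x}$, so one should either reparameterize so that the relevant increment has the right scale or absorb the factor into $Z$; the cleanest route is to extract the operator-norm bound $\|DT(x)\|_{\mathcal B(X)} \le Z(r_{\bar x})/r_{\bar x} < 1$ on the ball from \eqref{eq:Z} and then apply the standard Banach fixed point theorem. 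I would also note the mild technical assumption that $f$ (hence $T$) is $C^1$ along segments so the fundamental theorem of calculus applies, which is guaranteed by the smoothness hypothesis on $f$ stated at the start of Section \ref{sec:nonlinearAnalysis}.
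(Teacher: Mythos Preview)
Your argument is correct and is precisely the standard proof the paper defers to the literature (the paper does not prove the lemma but simply cites \cite{SJPKons}). The cleanest formulation is indeed the one you arrive at: from \eqref{eq:Z} and linearity of $DT(x)$ one extracts the uniform operator-norm bound $\|DT(x)\|_{B(X)} \le Z(r_{\bar x})/r_{\bar x}$ on the ball, and since $Y \ge 0$ the hypothesis $\beta(r_{\bar x}) < 0$ gives $Z(r_{\bar x})/r_{\bar x} < 1$, which together with $\|T(\bar x)-\bar x\| \le Y$ yields both the self-map and the contraction.
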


A proof of this lemma has appeared in many places. See for example \cite{SJPKons}. 
The decisive feature of the condition $\beta(r)<0$ in our context is that after deriving explicit expressions for the bounds in \eqref{eq:Y} and \eqref{eq:Z} are it can be checked rigorously by a computer using interval arithmetic.

\subsection{Norms and spaces}\label{sec:nands}

We are interested in solutions of PDEs whose spectral representation have coefficients with very 
rapid decay.  To be more precise let us consider  
\begin{equation}\label{eq:fouriercoeffspace}
\ell^{1}_{\nu} = \left\{a = (a_k)_{k\geq 0}, a_k\in\mathbb{R}: \quad |a|_{\nu} \bydef |a_{0}| + 2\displaystyle\sum_{k = 1}|a_k|\nu^k<\infty\right\}.
\end{equation}
Denote the induced operator norm by $|\cdot|_{\ell^{1}_{\nu}}$. Note that if $\nu > 1$ and $a \in \ell^{1}_{\nu}$ the function 
\[
u(x) = a_0 + 2 \sum_{k=1}^\infty a_k \cos(k x),
\]
is real analytic and extends to a periodic and analytic function on the complex strip of 
width $\log(\nu)$.  Moreover we have that 
\[
\| u \|_{C^0([0, 2 \pi])} := \sup_{x \in [0, 2 \pi]} |u(x)| \leq |a|_\nu,
\] 
i.e. the $\ell^{1}_{\nu}$ norm provides bounds on the supremum norm of the corresponding analytic 
function.  

$\ell^{1}_{\nu}$ is a Banach algebra under the discrete convolution operation $a\ast b$ given by 
\begin{equation}
(a\ast b)_{k} = \displaystyle\sum_{\stackrel{k_1+k_2 = k}{k_{i}\in\mathbb{Z}}}a_{|k_1|}b_{|k_2|}.
\end{equation}
with $a,b\in \ell^{1}_{\nu}$. For later use we define the notation $a^{\ast n}$ for $\underbrace{a\ast\cdots\ast a}_{n\hspace{5pt}\text{times}}$.

When we consider unstable manifolds for PDEs we are interested in analytic functions taking their values in $\ell^{1}_{\nu}$ as
just defined.  Such functions have convergent power series representations of the form
\begin{equation} \label{eq:FourierTaylorSeries}
P(\theta, x) = \sum_{|m|=0}^\infty \left( p_{m0}   + 2  \sum_{n =1}^\infty p_{m k} \cos(k x) \right) \theta^m, 
\end{equation}
where $m = (m_1, \ldots, m_d) \in \mathbb{N}^d$ is a $d$-dimensional multi-index, 
$\theta = (\theta_1, \ldots, \theta_d) \in \mathbb{C}^d$ 
and $\{p_{mk}\}_{m \in \mathbb{N}^d  k \in \mathbb{N}}$
is a sequence of \textit{Fourier-Taylor} coefficients (more specifically \textit{cosine-Taylor} coefficients in this case).  

We shorten this notation and write $\{p_m \}_{m \in \mathbb{N}^d}$, where for each $m \in \mathbb{N}^d$ we have
$p_m \in \ell^{1}_{\nu}$.
Then we are led to consider the multi-sequence space
\begin{equation}\label{eq:powercoeffspace}
X^{\nu,d} = \left\{p = (p_m)_{m\in\mathbb{N}^{d}}: p_m \in \ell^{1}_{\nu}
\text{ and } \|p\|_{\nu}\bydef \displaystyle\sum_{|m| = 0}^{\infty}|p_{m}|_{\nu}<\infty\right\},
\end{equation}
of power series coefficients in \eqref{eq:pow_ans}. We will drop the superscript $d$ whenever the dimension of the unstable manifold at hand is clear from the context.  We note that if $p \in X^{\nu}$ then the function 
$P(\theta, x)$ defined in Equation \eqref{eq:FourierTaylorSeries} is periodic and analytic in the 
variable $x$ on the complex strip 
with width $\log(\nu)$, and is analytic on the $d$-dimensional unit polydisk
$\mathbb{B}_1 \subset \mathbb{C}^d$ given by 
\[
\mathbb{B}_1 := \left\{ \theta = (\theta_1, \ldots, \theta_d) \in \mathbb{C}^d : \max_{1 \leq j \leq d} | \theta_j| < 1 \right\}.
\]
Moreover we have that 
\[
\sup_{\theta \in D} \sup_{x \in [0, 2 \pi]} |P(\theta, x)| \leq \| p \|_\nu,
\]
i.e. the norm on $X^\nu$ bounds the supremum norm of $P$.

The space $X^{\nu}$ inherits a Banach algebra structure from the  multiplication operator in the function space
representation. 

\begin{definition}\label{def:Fourier_Taylor}

Let two sequences $p,q\in X^{\nu,d}$ be given. Define $\ast_{TF}:X^{\nu,d}\times X^{\nu,d}\to X^{\nu,d}$ by 
\begin{equation}\label{eq:TF_convolution}
(p\ast_{TF}q)_{m} = \displaystyle\sum_{l\preceq m}p_l\ast q_{m-l},
\end{equation}
where $l\preceq m$ means $l_{i}\leq m_{i}$ for all $i = 1,\ldots,d$. Set $p^{\ast_{TF}n}\bydef \underbrace{p\ast_{TF}\cdots \ast_{TF}p}_{n\hspace{5pt}\text{times}}$.
\end{definition}
The well-definedness of this operation follows from the following lemma.
\begin{lemma}\label{lem:convlemma}
Let $p,q\in X^{\nu,d}$ be given. Then
\begin{equation}\label{lem:TFBanach}
\|p\ast_{TF}q\|_{\nu}\leq \|p\|_{\nu}\|q\|_{\nu}.
\end{equation}
In particular $(X^{\nu},\ast_{TF})$ is a Banach algebra.
\end{lemma}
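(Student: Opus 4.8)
The plan is to prove the product estimate \eqref{lem:TFBanach} directly from the definitions, and then deduce the Banach algebra property. Everything reduces to two facts already available: first, that $(\ell^1_\nu, \ast)$ is a Banach algebra (stated above, i.e. $|a \ast b|_\nu \le |a|_\nu |b|_\nu$), and second, that the Taylor-index convolution $(p \ast_{TF} q)_m = \sum_{l \preceq m} p_l \ast q_{m-l}$ is just a (finite) Cauchy-product reindexing in the multi-index variable. So the strategy is: bound $|(p\ast_{TF} q)_m|_\nu$ by a sum of products of $\ell^1_\nu$-norms using the triangle inequality and the $\ell^1_\nu$ algebra property, then sum over $m$ and recognize the result as the product of two convergent series.

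Concretely, the first step is the pointwise-in-$m$ estimate: for each $m \in \mathbb{N}^d$,
\[
|(p \ast_{TF} q)_m|_\nu = \Bigl| \sum_{l \preceq m} p_l \ast q_{m-l} \Bigr|_\nu
\le \sum_{l \preceq m} |p_l \ast q_{m-l}|_\nu
\le \sum_{l \preceq m} |p_l|_\nu \, |q_{m-l}|_\nu,
\]
using the triangle inequality in $\ell^1_\nu$ and then the Banach algebra inequality for $\ast$. The second step is to sum this over all $m \in \mathbb{N}^d$ and interchange the order of summation (legitimate because all terms are nonnegative, so Tonelli applies):
\[
\|p \ast_{TF} q\|_\nu = \sum_{|m|=0}^\infty |(p \ast_{TF} q)_m|_\nu
\le \sum_{m \in \mathbb{N}^d} \sum_{l \preceq m} |p_l|_\nu \, |q_{m-l}|_\nu
= \sum_{l \in \mathbb{N}^d} |p_l|_\nu \sum_{n \in \mathbb{N}^d} |q_n|_\nu
= \|p\|_\nu \, \|q\|_\nu,
\]
where in the penultimate equality I substitute $n = m - l$ and observe that as $m$ ranges over all multi-indices $\succeq l$, $n$ ranges over all of $\mathbb{N}^d$. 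The third step is bookkeeping: this estimate shows $p \ast_{TF} q \in X^{\nu,d}$ (the norm is finite), so $\ast_{TF}$ is well-defined as a map into $X^{\nu,d}$; combined with bilinearity (immediate from the definition), associativity and commutativity (inherited from the corresponding properties of $\ast$ on $\ell^1_\nu$ together with the standard reindexing identities for convolution of multi-sequences), and the existence of a unit (the sequence with $p_0 = (1, 0, 0, \ldots) \in \ell^1_\nu$ and $p_m = 0$ otherwise), we conclude $(X^{\nu,d}, \ast_{TF})$ is a Banach algebra.

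I do not anticipate a genuine obstacle here; this is a routine ``$\ell^1$ of an $\ell^1$-valued sequence'' argument. The one point requiring a modicum of care is the interchange of the double sum over $m$ and $l$ — but since every summand is a nonnegative real number, Tonelli's theorem (or simply the fact that rearrangement is harmless for nonnegative series) justifies it without any additional hypothesis, and the absolute convergence $\|p\|_\nu, \|q\|_\nu < \infty$ then guarantees finiteness of the result. The verification of the algebra axioms beyond the norm inequality is standard and can be dispatched in a sentence by citing the corresponding structure on $\ell^1_\nu$.
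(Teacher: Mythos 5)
Your proof is correct. The paper in fact states Lemma \ref{lem:convlemma} without proof (it is treated as a routine consequence of the Banach algebra property of $(\ell^1_\nu,\ast)$), and your argument --- triangle inequality in $\ell^1_\nu$ on the finite sum defining $(p\ast_{TF}q)_m$, submultiplicativity of $|\cdot|_\nu$ under $\ast$, and Tonelli to reindex the resulting nonnegative double sum as $\|p\|_\nu\|q\|_\nu$ --- is precisely the standard argument that fills this gap; your remarks on well-definedness and the remaining algebra axioms are likewise correct.
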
 
Recall that if $F \colon X \to Y$ is a mapping between Banach 
spaces and $x \in X$ then we say that $F$ is 
\textit{Fr\'{e}chet differentiable at} $x$ if there exists a 
bounded linear operator $A \colon X \to Y$ so that 
\[
\lim_{\| h \| \to 0} \frac{\| F(x + h) - F(x) - A h\|_Y}{\|h\|_X} = 0. 
\] 
Recall that the operator $A$, if it exists, is unique.  
When there is such an $A$ we write $DF(x) := A$.
Since $\ell_\nu^1$ and $X^\nu$ are both commutative 
Banach algebras we recall the following general facts
from the calculus of Banach algebras.  Let $(X, *)$
be a commutative Banach algebra. 
It is a straightforward exercise to prove the following:
\begin{itemize}
\item For any fixed $a \in X$ the map $L \colon X \to X$
defined by $L(x) = a * x$ is a bounded linear operator, 
hence it is Fr\'{e}chet differentiable with  $DL \, h = a*h$
for all $h \in X$.
\item The monomial operator 
$F \colon X \to X$ defined by $F(x) = x*x$ is Fr\'{e}chet differentiable 
with $DF(x)h = 2 x*h$ for any $h \in X$.  
\item Applying this rule inductively gives that the nonlinear 
map $G \colon X \to X$ defined by $G(x) = x^{*_n}$
is Fr\'{e}chet differentiable with $DG(x) h = n x^{*_{n-1}} h$
for all $h \in X$.
\end{itemize}

\subsection{Computer assisted verification of the unstable eigenvalue count for a bounded perturbation 
of an eventually diagonal linear operator}\label{subsec:morse_index}

In the sequel we are interested in counting the number of unstable eigenvalues of 
certain linear operators which arise as small, infinite dimensional, perturbations of 
some finite dimensional matrices.  The following spectral perturbation lemma
is formulated in a fashion which is especially well suited to our computational needs.
Similar results have appeared in \cite{MR727816, MR3338319, kotParm}.
See also Remark \ref{rem:CAP_PDE}.
The approach described in this section takes rather explicit advantage of 
the sequence space structure of the problem.  In particular we study a
class of linear operators which have a ``infinite matrix'' representation.  
First some notation.

Suppose that $A, Q, Q^{-1} \colon \ell^{1}_{\nu} \to \ell^{1}_{\nu}$  are bounded linear operators.  Assume that 
$A$ is compact and that $\{\lambda_j \}_{j = 0}^\infty$ are the eigenvalues of $A$.
Suppose that for some $m \geq 0$ the eigenvalues satisfy
\[
  0 <   \mbox{{ real}}(\lambda_m) \leq \ldots \leq   \mbox{{real}}(\lambda_0),
\]
i.e. that there are $m+1$ unstable eigenvalues.  Assume that for all $j \geq m+1$ we have
\[
 \mbox{{real}}(\lambda_j) < 0,
\]
i.e. the remaining eigenvalues are stable.

Then $A$ has no eigenvalues on the imaginary axis, i.e. $A$ is hyperbolic.
Moreover we assume that the stable spectrum of $A$ 
is contained in some cone in the left half plane.  More precisely, suppose that there is $\mu_0 > 0$ so that 
\[
\mu_0 := \sup_{j \geq 0} \sqrt{1 + \left( \frac{\mbox{imag}(\lambda_j)}{\mbox{real}(\lambda_j)} \right)^2} < \infty.
\]
Note that, since $A$ is compact, the $\lambda_j$ accumulate only at zero and the spectrum of 
$A$ is comprised of the union of these eigenvalues and the origin in $\mathbb{C}$.

Now suppose that $A$ factors as 
\[
A = Q \Sigma Q^{-1},
\]
where for all $h \in \ell^{1}_{\nu}$ we define
\[
(\Sigma h)_k = \lambda_k h_k,
\]
i.e. suppose that $A$ is diagonalizable.
Note that $\Sigma$ is a compact operator.  
Consider the operator $\Sigma^{-1}$ given by 
\[
(\Sigma^{-1} h)_k = \frac{h_k}{\lambda_k},  
\]
for $k \geq 0$.  
The operator is formally well defined as the assumption that all the $\lambda_j$ have non-zero real part 
implies in particular that $\lambda_j \neq 0$ for all $j \geq 0$.  Moreover the 
operator $\Sigma^{-1}$ has exactly $m$ unstable eigenvalues $1/\lambda_j$ for $0 \leq j \leq m$, and 
the stable spectrum is contained in the same cone as the stable spectrum of $A$. 
 Note however that $\Sigma^{-1}$ need not be a bounded linear operator 
on $\ell^{1}_{\nu}$.  Nevertheless one checks that 
\[
\Sigma \Sigma^{-1} = \mbox{I} \quad \quad \quad \mbox{and} \quad \quad \quad 
\Sigma^{-1} \Sigma = \mbox{I},
\]  
on $\ell^{1}_{\nu}$.
In the applications below $\Sigma^{-1}$ will be a densely defined operator on $\ell^{1}_{\nu}$.

Consider now the operator
\[
B = Q \Sigma^{-1} Q^{-1}.
\]
$B$ is formally well-defined (in fact has the same domain as $\Sigma^{-1}$) and has
\[
A B = \mbox{I},   \quad \quad \mbox{and} \quad \quad B A  = \mbox{I},
\]
on $\ell^{1}_{\nu}$.  
Moreover, if $\Sigma^{-1}$ is densely defined so is $B$.
The eigenvalues of $B$ are precisely $\frac{1}{\lambda_k}$
and in particular $B$ has exactly the $m+1$ unstable eigenvalues 
$\frac{1}{\lambda_j}$ for $0 \leq j \leq m$.  We are interested in bounded perturbations of 
$B$, and have the following Lemma.

\begin{lemma} \label{lem:eigCount}
Suppose that $A, Q, Q^{-1} \colon \ell^{1}_{\nu} \to \ell^{1}_{\nu}$, $\{\lambda_j\}_{j=0}^\infty \subset \mathbb{C}$
and $\mu_0 > 0$ are as discussed above, and that 
$B = Q \Sigma^{-1} Q^{-1}$ is a (possibly only densely defined) linear operator on $\ell^{1}_{\nu}$.
Let $H \colon \ell^{1}_{\nu} \to \ell^{1}_{\nu}$ be a bounded linear operator 
and let $M$ be the (densely defined) linear operator 
\[
M = B + H.
\]
Assume that $\epsilon > 0$ is a positive real number with 
\[
\|\mbox{I} - AM \|_{B\left(\ell^{1}_{\nu} \right)} \leq \epsilon,
\]
and
\[
\| Q \|_{B\left(\ell^{1}_{\nu} \right)} \|Q^{-1}\|_{B\left(\ell^{1}_{\nu} \right)} \mu_0 \epsilon < 1.
\]
Then $M$ has exactly $m$ unstable eigenvalues.  
\end{lemma}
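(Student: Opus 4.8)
The plan is to reduce the eigenvalue count for $M = B + H$ to the eigenvalue count for the known operator $B$, using the hypothesis $\|\mathrm{I} - AM\|_{B(\ell^1_\nu)} \le \epsilon$ together with a homotopy/perturbation argument conjugated through $Q$. First I would observe that $\lambda$ is an eigenvalue of $M$ (with eigenvector $v$) if and only if $Mv = \lambda v$, and since $A$ is the (formal) inverse of $B$ and $AM$ is bounded with $\|\mathrm{I} - AM\| \le \epsilon < 1$, the operator $AM$ is boundedly invertible on $\ell^1_\nu$. The idea is that $AM$ is a small bounded perturbation of the identity, so writing $M = A^{-1}(AM)$ lets us compare the spectrum of $M$ with that of $A^{-1} = B$. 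Concretely, for $\lambda$ with $\mathrm{real}(\lambda) \ge 0$ I would study the equation $(M - \lambda)v = 0$, multiply by $A$ to get $(AM - \lambda A)v = 0$, i.e. $(\mathrm{I} - \lambda A)v = (\mathrm{I} - AM)v$, and then conjugate by $Q^{-1}$: setting $w = Q^{-1} v$, the left side becomes $Q^{-1}(\mathrm{I} - \lambda A)Q\,w = (\mathrm{I} - \lambda \Sigma)w$, which acts diagonally with entries $1 - \lambda\lambda_k$.

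The key quantitative step is then to bound $(\mathrm{I} - \lambda\Sigma)^{-1}$ on $\ell^1_\nu$ when $\mathrm{real}(\lambda) \ge 0$ and $\lambda$ is not one of the $m+1$ unstable eigenvalues $\lambda_0,\dots,\lambda_m$. Since $(\mathrm{I} - \lambda\Sigma)^{-1}$ is diagonal with entries $1/(1 - \lambda\lambda_k)$, its operator norm is $\sup_k |1 - \lambda\lambda_k|^{-1}$. For $k \ge m+1$ we have $\mathrm{real}(\lambda_k) < 0$ and $\mathrm{real}(\lambda) \ge 0$, and the cone condition encoded by $\mu_0$ gives a lower bound $|1 - \lambda\lambda_k| \ge 1/\mu_0$ — this is the geometric heart of the argument: the product $\lambda\lambda_k$ lies in a half-plane/cone bounded away from $1$, with the worst case controlled by $\mu_0$. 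Hence on the stable block $\|(\mathrm{I} - \lambda\Sigma)^{-1}\| \le \mu_0$. Combining, for such $\lambda$ the equation $(\mathrm{I} - \lambda\Sigma)w = Q^{-1}(\mathrm{I} - AM)Q\,w$ forces $\|w\| \le \mu_0 \|Q^{-1}\|\|Q\|\,\epsilon\,\|w\|$ (restricting attention to the stable components and handling the finitely many unstable components by the homotopy below), and since $\|Q\|\|Q^{-1}\|\mu_0\epsilon < 1$ this gives $w = 0$, hence $v = 0$. So $M$ has no eigenvalues in $\mathrm{real}(\lambda) \ge 0$ other than possibly near $\lambda_0,\dots,\lambda_m$.

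To pin down that $M$ has \emph{exactly} $m$ — not $m+1$ — unstable eigenvalues, and to handle the unstable block properly, I would run a homotopy $M_t = B + tH$ for $t \in [0,1]$, noting $\|\mathrm{I} - AM_t\| = \|t(\mathrm{I} - AM) + (1-t)(\mathrm{I} - AB)\| \le t\epsilon \le \epsilon$ since $AB = \mathrm{I}$; thus the estimate above applies uniformly along the homotopy, showing no eigenvalue of $M_t$ can cross the imaginary axis (more carefully: no eigenvalue enters the region $\mathrm{real}(\lambda) \ge 0$ away from a fixed compact neighborhood of $\{\lambda_0,\dots,\lambda_m\}$, and a standard Rouch\'e/spectral-projection continuity argument on that compact region shows the number of eigenvalues inside stays constant). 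Wait — I should double-check the counting: $B$ has $m+1$ unstable eigenvalues $1/\lambda_0,\dots,1/\lambda_m$ by hypothesis, yet the Lemma concludes $M$ has exactly $m$; so the indexing convention here must be that "exactly $m$ unstable eigenvalues" is shorthand consistent with the "$m+1$" count stated for $A$ and $B$ (i.e. eigenvalues indexed $0$ through $m$), and I would simply match the conclusion to that convention, the substantive claim being that the unstable count is \emph{unchanged} under the perturbation $H$. The main obstacle I anticipate is the spectral-projection continuity step on the compact region containing $\lambda_0,\dots,\lambda_m$: one must argue that the finite-rank Riesz projections associated to the unstable eigenvalues vary continuously in $t$ (which requires the resolvent $(\lambda - M_t)^{-1}$ to be norm-continuous in $t$ on a fixed contour, using that $H$ is bounded and $AM_t$ stays invertible), and that no unstable eigenvalue can escape to infinity along the homotopy (handled by the a priori bound, since $M_t$'s resolvent is controlled wherever $\mathrm{real}(\lambda) \ge 0$ and $|\lambda|$ is large). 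Everything else is the diagonal norm computation and the cone estimate, both of which are routine given the hypotheses.
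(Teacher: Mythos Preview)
Your core strategy --- the homotopy $M_t = B + tH$ together with a resolvent estimate preventing eigenvalues from crossing the imaginary axis --- is exactly what the paper does, but you have overcomplicated the execution in a way that leaves real gaps.

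The paper restricts to purely imaginary $\lambda = i\mu$ and factors the resolvent directly:
\[
C_t - i\mu\,\mathrm{I} \;=\; (B - i\mu\,\mathrm{I})\bigl[\mathrm{I} + t(\mathrm{I} - i\mu A)^{-1} AH\bigr],
\]
then observes that $\|AH\| = \|AM - AB\| = \|\mathrm{I} - AM\| \le \epsilon$ and that $\|(\mathrm{I} - i\mu A)^{-1}\| \le \|Q\|\|Q^{-1}\|\mu_0$ uniformly in $\mu \in \mathbb{R}$ and \emph{for all indices $j$ simultaneously}, not just the stable ones. The diagonal bound $\sup_j|1 - i\mu\lambda_j|^{-1} \le \mu_0$ is a two-line computation using that the worst $\mu$ for index $j$ is $\mu = \mathrm{imag}(\lambda_j^{-1})$ and that $\mathrm{Arg}(\lambda_j^{-1}) = -\mathrm{Arg}(\lambda_j)$. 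Neumann then gives invertibility of $C_t - i\mu\,\mathrm{I}$ for every $t$ and every $\mu$, so no eigenvalue touches the imaginary axis and the unstable count is preserved.

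By contrast, your attempt to work on the entire closed right half-plane forces you to split off the unstable block and appeal to a Rouch\'e/spectral-projection argument there; none of that is needed if you stay on the imaginary axis. Moreover, your asserted cone bound $|1 - \lambda\lambda_k| \ge 1/\mu_0$ for general $\mathrm{real}(\lambda) \ge 0$ and $k \ge m+1$ is not justified as written, and you conflate $\lambda_j$ (the eigenvalues of $A$) with $1/\lambda_j$ (the eigenvalues of $B$) when you speak of a compact neighborhood of $\{\lambda_0,\dots,\lambda_m\}$. Your observation about the $m$ versus $m+1$ discrepancy in the statement is correct: the substantive claim is that the unstable count of $M$ equals that of $B$, namely $m+1$.
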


\begin{proof}
The result and its proof are similar to Lemma E.1 of \cite{kotParm}.
Indeed, we will construct a homotopy from $A$ to $B$ just as
in Lemma E.1.  Repeating the argument of Step 1 of the 
proof of Lemma E.1,  one sees that
$A$ and $B$ have the same Morse index
as soon as we can show that no eigenvalues cross the imaginary axis 
during the homotopy.  

We begin by noting that for all $\mu \in \mathbb{R}$ the operator 
\[
B - i \mu \mbox{I},
\]
is boundedly invertible, as $i \mu$ is not in the spectrum of $B$.
Similarily, we have that the operator 
\[
\mbox{I} - \mu i A = Q (\mbox{I} - \mu i \Sigma ) Q^{-1},  
\]
is boundedly invertible.  To see this note that
\[
(\mbox{I} - \mu i A)^{-1} = Q (\mbox{I} - \mu i \Sigma)^{-1} Q^{-1},
\]
where 
\[
[(\mbox{I} - \mu i \Sigma)^{-1} h]_k = \frac{1}{1 - \mu i \lambda_k} h_k
\]
for all $k \geq 0$, and since $\lambda_k$ is never purely imaginary this denominator is 
never zero.  Indeed for each $j$ the worst case scenario is that $\mu = \mbox{imag}(\lambda_j^{-1})$, 
so that 
\[
\sup_{j \geq 0} \left|  \frac{1}{1 - \mu i \lambda_j} \right| \leq 
\sup_{j \geq 0} \left|\frac{\lambda^{-1}_j}{\lambda_j^{-1} - i \mu}   \right|
\]
\begin{eqnarray*}
&\leq& \sup_{j \geq 0} \left|\frac{\lambda^{-1}_j}{\lambda_j^{-1} - i\,   \mbox{imag}(\lambda_j^{-1})}   \right| \\
&=& \sup_{j \geq 0} \left|  \frac{\lambda_j^{-1}}{\mbox{real}(\lambda_j^{-1})}   \right| \\
&=& \sup_{j \geq 0} \sqrt{1 + \left( \frac{\mbox{imag}(\lambda_j^{-1})}{\mbox{real}(\lambda_j^{-1})} \right)^2} \\
&=& \sup_{j \geq 0} \sqrt{1 + \left( \frac{\mbox{imag}(\lambda_j)}{\mbox{real}(\lambda_j)} \right)^2} \\
&=& \mu_0,
\end{eqnarray*}
as $\mbox{Arg}(\lambda_j^{-1}) = -\mbox{Arg}(\lambda_j)$.  From this we obtain that 
\[
\|(\mbox{I} - \mu i A)^{-1} \|_{B\left(\ell^{1}_{\nu} \right)} \leq \| Q \|_{B\left(\ell^{1}_{\nu} \right)} \| Q^{-1}\|_{B\left(\ell^{1}_{\nu} \right)}
\sup_{j \geq 0} \left| \frac{1}{1 - i \mu \lambda_j} \right|  \leq 
\| Q \|_{B\left(\ell^{1}_{\nu} \right)} \| Q^{-1}\|_{B\left(\ell^{1}_{\nu} \right)} \mu_0.
\]
Note that 
\[
\| A H \| = \| A (M - B)\| = \| AM - AB \| = \| \mbox{I} - AM \| \leq \epsilon < 1,
\]
by hypothesis.

We now consider the homotopy  
\[
C_t = B + tH,
\]
for $t \in [0, 1]$ and note that $C_0 = B$ and $C_1 = M$.
Again, we take $\mu \in \mathbb{R}$ and consider the resolvent operator 
\begin{eqnarray*}
C_t - i \mu \mbox{I} &=& B - i \mu \mbox{I} + t H \\
&=& (B - i \mu \mbox{I}) \left[ \mbox{I} + t(B - i \mu \mbox{I} )^{-1}   H \right] \\
&=& (B - i \mu \mbox{I}) \left[ \mbox{I} + t(B - i \mu \mbox{I} )^{-1}  B A  H \right] \\
&=& (B - i \mu \mbox{I}) \left[ \mbox{I} + t(\mbox{I} - i \mu A )^{-1}   A  H \right]. \\
\end{eqnarray*}
Note that for all $t \in [0, 1]$ we have
\[
 \|t(\mbox{I} - i \mu A )^{-1}   A  H\| \leq \| Q \|_{B\left(\ell^{1}_{\nu} \right)} \| Q^{-1}\|_{B\left(\ell^{1}_{\nu} \right)} \mu_0 \|A H \| 
 \leq \| Q \|_{B\left(\ell^{1}_{\nu} \right)} \| Q^{-1}\|_{B\left(\ell^{1}_{\nu} \right)} \mu_0 \epsilon < 1,
\]
by hypothesis.  By the Neumann theorem, $C_t - i \mu \mbox{I}$
is boundedly invertible for all $\mu \in \mathbb{R}$ and all $t \in [0,1]$.  Then 
the resolvent is boundedly invertible throughout the homotopy, which implies
that no eigenvalues cross the imaginary axis.
Then the number of unstable eigenvalues is constant throughout the homotopy, i.e. 
$B$ and $M$ have exactly $m$ unstable eigenvalues as claimed.
\end{proof}


\section{Parameterization Method for unstable manifolds of parabolic PDE} \label{sec:ParmMethod}
Since its introduction in \cite{param1, param2, param3, whiskeredtori1, whiskeredtori2},
a small industry has grown up around the Parameterization Method and its applications.
A proper review of the literature would make an excellent subject for an entire manuscript,
and is certainly beyond the scope of the present work.  A wonderful survey of the subject, 
with many applications and thorough discussion of the literature, 
is found in the book of \cite{mamotreto}.  

The modest aim of the present discussion is to give the reader the flavor of the
activity in this area.  More importantly, we hope to indicate that the 
Parameterization Method is a much more general tool than the present work, 
viewed in isolation, would suggest.  
Here follows a brief (and by no means definitive) survey of some of 
problems which have been addressed using the method.
The list is organized by topic with corresponding citations.
The interested reader will be lead to many additional techniques and applications
by consulting the references of the works cited here.

\begin{itemize}
\item \textit{Stable/unstable manifolds for non-resonant fixed points/equilibria:}
theory for maps on Banach spaces \cite{param1, param2}, 
parabolic fixed points \cite{MR2276478},
numerical implementation for maps and ODEs 
\cite{jayvortex, MR2728178, MR2728178, MR3437754, alexADmanuscript},
validated numerical methods for maps and ODEs 
\cite{MR3068557, parmResPaper, JBJPJayKons, parmResPaper}.
\item \textit{Stable/unstable bundles and manifolds for invariant tori:}
discrete time 
\cite{MR2289544, MR2240743, MR3309008, MR3082311, MR2299977, MR2507954},
Hamiltonian ODEs and PDEs \cite{whiskeredtori3, MR2505176}
\item \textit{Stable/unstable manifolds of periodic orbits for ODEs:}
theory and numerical implementation \cite{param3, MR3118249, MR2551254, paramperiodic}.
\item \textit{KAM without action angle variables:} \cite{MR3118249}
invariant tori for symplectic maps \cite{MR2122688}, 
invariant tori for non-autonomous Hamiltonian systems \cite{MR3396853},
invariant tori in dissipative systems \cite{MR3062760},
mixed stability invariant manifolds associated with fixed points in 
symplectic/volume preserving maps \cite{MR2966749}.
\item \textit{Map Lattices:} stable/unstable manifolds for invariant tori \cite{MR3353644},
almost-periodic breathers and their stable/unstable manifolds \cite{MR3147408}.
\item \textit{Invariant tori for state dependent delay equations:} $C^k$/hyperbolic case \cite{delayParmI}, 
Analytic/KAM case \cite{delayParmII}.
\item \textit{Invariant manifolds for dissipative infinite dimensional dynamical 
systems:} parabolic PDEs \cite{jpJordiMarcioRafaPaper}, compact maps
\cite{kotParm}.
\end{itemize}

\subsection{A-priori existence for solutions of Equation \eqref{eq:func_eq}:
a non-resonant unstable manifold theorem for parabolic PDE} \label{sec:aPrioriExist}
Let $X$ be a Banach algebra and let $A$ be a closed, densely defined,
with compact resolvent.  Then $A$ generates a compact semigroup, 
which we denote by $e^{At}$, $t \geq 0$.  An explicit formula for the 
exponential can be obtained as a line integral of the resolvent 
operator in the complex plane.  While we make no explicit use of this  
representation in the present work, it is important to us that 
an estimate of the form 
\[
\|e^{A t} \|_{B(X)} \leq M e^{\mu_* t},
\]
can be obtained from this line integral representation. 
The explicit values of the constants will not matter to us in the sequel.
However we will exploit the fact that if $A$ is sectorial with spectrum 
in the open left half plane then $\mu_* < 0$.  Such an operator 
will be called dissipative.  The material above is standard in 
the theory of analytic semi-groups and parabolic PDEs. See for example 
\cite{MR1637218, MR1721989}.

Now let $G \colon X \to X$ be a Fr\'{e}chet differentiable map.
In fact we will be interested in the convergence of certain formal series,
so we assume in addition that $G$ is analytic.
Consider the differential equation
\begin{equation}\label{eq:BanachODE}
x' = Ax + G(x) 
\end{equation}
Suppose that $\tilde x \in X$ is a hyperbolic stationary solution
of Equation \eqref{eq:BanachODE},
i.e. that $ A + DG(\tilde x)$ has no eigenvalues on the imaginary axis.
Since $DG(\tilde x)$ is a bounded linear operator the operator $ A + DG(\tilde x)$
remains sectorial, and again generates a compact semigroup.  
Then $A + DG(\tilde x)$ has at most finitely many unstable eigenvalues of 
finite multiplicity.   
We denote these eigenvalues by $\lambda_1, \ldots, \lambda_d$
and suppose for the sake of simplicity (this is the case studied in the 
sequel) that they are real and distinct, i.e. of multiplicity one.  
Then there are eigenvectors $\xi_1, \ldots, \xi_d$ which we choose to have 
unit norm (we will rescale them by explicit constants below).

After an affine change of variables we can arrange that  
$\tilde x = 0$ and that the equation becomes 
\begin{equation}\label{eq:abstractODE}
x' = Ax + N(x) \bydef F(x),
\end{equation}
with
\[
A = \left(
\begin{array}{cc}
A_u & 0 \\
0 & A_s
\end{array}
\right),
\]
where 
\[
A_u = 
 \left(
\begin{array}{ccc}
\lambda_1 & \ldots & 0 \\
\vdots & \ddots & \vdots \\
0 & \ldots & \lambda_d
\end{array}
\right)
\]
and $A_s$ is a dissipative operator, i.e. 
there are $M, \mu_* > 0$ so that 
\[
\| e^{A_s t} \|_{B(X)} \leq M e^{- \mu_* t},
\]
for all $t \geq 0$.  
Moreover the semigroup $e^{A_s t}$ is 
compact. The function $N$, is analytic in a neighborhood
of the origin and is zero to second order, and we write
\begin{equation}\label{eq:nonlinearity}
N(x) = \sum_{j = 2}^\infty n_j x^{* j},
\end{equation}
with coefficients $n_j \in X$. Since $N$ is analytic on 
the disk there exists an 
$R > 0$ so that 
\[
\sum_{j = 2}^{\infty} \| n_j \| R^{|m|} < \infty.
\]

\begin{definition}[Resonance of order $m$] \label{def:nonRes}
We say that the complex numbers $\lambda_1, \ldots, \lambda_d$ have a 
resonance of order $(m_1, \ldots, m_d) = m \in \mathbb{N}^d$ if 
\[
m_1 \lambda_1 + \ldots + m_d \lambda_d = \lambda_j,
\]
for some $1 \leq j \leq d$ and some $|m| \geq 2$.  
We say that $\lambda_1, \ldots, \lambda_d$ are non-resonant if 
there is no resonance of order $m$ for any order $|m| \geq 2$.
\end{definition}

If we consider $\lambda_1, \ldots, \lambda_d$ a finite collection of unstable eigenvalues,
then there are only finitely many opportunities for resonances between these (as for $|m|$ 
large enough the product on the left has magnitude larger than any unstable
eigenvalue).  Then, despite first impressions, Definition \ref{def:nonRes} imposes
only a finite number of constraints.

Define the linear approximation 
\[
P_1(\theta_1, \ldots, \theta_d) = \tilde x + s_1 \xi_1 \theta_1 + \ldots + s_d \xi_d \theta_d.
\]
where the $s_1, \ldots, s_d$ are arbitrary non-zero real numbers (these 
numbers can be thought of as ``tuning'' the scalings of the eigenvectors,
as we choose the $\xi_j$ to have unit norm).
We have the following unstable manifold theorem.

\begin{theorem}[Existence of a conjugating chart map for the unstable manifold] \label{thm:smt}
Suppose that the unstable spectrum of $DF(\tilde x)$ consists of only the 
unstable eigenvalues $\lambda_1, \ldots, \lambda_d$.  Assume that
each of these is real and has finite multiplicity one. 
Assume in addition that $\lambda_1, \ldots, \lambda_d$ are non-resonant in 
the sense of Definition \ref{def:nonRes}.
Then there is a $\delta > 0$ so that if  
\[
\max_{1 \leq j \leq d} |s_j| \leq \delta,
\]
then there exists a unique
$P \colon B_1 \to X$ satisfying the first order constraints
\begin{equation}\label{eq:lin_BA}
P(0) = \tilde x, 
\quad \quad \quad \mbox{and} \quad \quad \quad 
\frac{\partial}{\partial \theta_j} P(0) = s_j \xi_j,
\end{equation}
and having that $P$ is a solution of 
the invariance equation
\begin{equation}\label{eq:func_eqBA}
F(P(\theta)) = DP(\theta)A_u \theta
\end{equation}
on $B_1$. In particular $P[B_1]$ is a local 
unstable manifold at $\tilde x$.
\end{theorem}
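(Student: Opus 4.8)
The plan is to solve the invariance equation \eqref{eq:func_eqBA} by a power series ansatz in $\theta$, derive the homological equations for the Taylor coefficients $p_m$, use the non-resonance hypothesis to solve these uniquely at each order, and then prove convergence of the resulting formal series on $B_1$ by a majorant/contraction argument, exploiting the freedom to choose the scalings $s_j$ small. First I would write $P(\theta) = \sum_{|m| \geq 0} p_m \theta^m$ with $p_0 = \tilde x = 0$ (after the affine change of variables) and $p_{e_j} = s_j \xi_j$, so the first-order constraints \eqref{eq:lin_BA} are built in. Substituting into \eqref{eq:func_eqBA} and matching the coefficient of $\theta^m$ for $|m| = k \geq 2$ produces, using $N(x) = \sum_{j\ge2} n_j x^{*j}$ and the Banach-algebra convolution $\ast_{TF}$, an equation of the schematic form
\begin{equation}
\left( (m_1 \lambda_1 + \cdots + m_d \lambda_d)\,\mathrm{I} - A \right) p_m = \Phi_m\bigl(\{p_l : |l| < k\}\bigr),
\end{equation}
where $\Phi_m$ collects all convolution terms built from strictly lower-order coefficients (this is Equation \eqref{eq:homologicalEqnBanachAlg} referenced in the text). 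The key structural point is that $\Phi_m$ never involves $p_m$ itself because $N$ vanishes to second order.

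The next step is invertibility of the homological operator $(\langle m,\lambda\rangle\,\mathrm{I} - A)$ on $X$. Split $A = \mathrm{diag}(A_u, A_s)$. On the unstable (finite-dimensional) block the operator acts as $\mathrm{diag}(\langle m,\lambda\rangle - \lambda_1, \ldots, \langle m,\lambda\rangle - \lambda_d)$, which is invertible precisely by the non-resonance condition of Definition \ref{def:nonRes} (and since only finitely many orders $|m|$ can possibly resonate, the set of "bad" orders is empty under the hypothesis). On the stable block, $\langle m,\lambda\rangle > 0$ while $A_s$ has spectrum in the open left half-plane bounded away from the imaginary axis, so $\langle m,\lambda\rangle\,\mathrm{I} - A_s$ is boundedly invertible with a norm bound that is uniform in $m$ — indeed one can get $\|(\langle m,\lambda\rangle\,\mathrm{I} - A_s)^{-1}\|_{B(X)} \leq C/(1 + |m|)$ for $|m|$ large, using sectoriality of $A_s$ and the fact that $\langle m,\lambda\rangle \to \infty$. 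Thus each $p_m$ is uniquely determined, giving existence and uniqueness of the formal series.

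The final and hardest step is convergence of $\sum p_m \theta^m$ for $\theta \in B_1$, i.e. showing $\|p\|_\nu < \infty$ in $X^{\nu,d}$ (or the relevant norm). Here I would set up a majorant argument: define $P_{\leq k}$ the truncation at order $k$ and show inductively that $|p_m| \leq C \rho^{|m|}$ for a suitable $\rho$, or equivalently reformulate the whole problem as a fixed-point equation $P = P_1 + \mathcal{L}^{-1} \mathcal{N}(P)$ on a ball in $X^{\nu,d}$, where $\mathcal{L}^{-1}$ is the (order-graded) inverse of the homological operator and $\mathcal{N}$ is the nonlinear term. Because $\mathcal{L}^{-1}$ is bounded order-by-order with a bound that does not degrade at high order (it in fact improves like $1/|m|$ on the stable part, and is bounded on the finite unstable part once resonances are excluded), and because $\mathcal{N}$ is at least quadratic and analytic with radius of convergence $R$, the map $P \mapsto P_1 + \mathcal{L}^{-1}\mathcal{N}(P)$ is a contraction on a small ball provided $\|P_1\|$ is small — and $\|P_1\| = \sum_j |s_j| \|\xi_j\|$ is controlled by $\delta = \max_j |s_j|$. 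Choosing $\delta$ small enough makes the contraction constant less than one, yielding a genuine solution $P \in X^{\nu,d}$ on $B_1$; its image is a local unstable manifold at $\tilde x$ by Lemma \ref{lem:flowInv}. The main obstacle is the careful bookkeeping of the order-graded estimates on $\mathcal{L}^{-1}$ — in particular verifying uniformity in $m$ of the stable-block resolvent bound via sectoriality, and checking that the finitely many unstable-block denominators $\langle m,\lambda\rangle - \lambda_j$ stay bounded away from zero (which is where non-resonance enters, but one must also confirm there is a uniform lower bound, not merely nonvanishing, which holds automatically since there are finitely many relevant $m$).
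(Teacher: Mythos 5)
Your proposal is correct and follows essentially the same route as the paper: the paper likewise writes $P = P_1 + H$, inverts the homological operator $\mathcal{L}[H] = DH\,A_u\theta - AH$ (term by term on the unstable projections using non-resonance, and on the stable projection with a uniform bound), and runs a contraction argument for $H \mapsto \mathcal{L}^{-1}[N(P_1+H)]$ on a small ball, with the smallness of the scalings $s_j$ supplying the contraction constant. The only cosmetic difference is that you phrase the stable-block inversion as a uniform-in-$m$ resolvent estimate from sectoriality, whereas the paper represents the inverse by the Duhamel integral $\int_0^\infty e^{A_s t}E_s(e^{-\lambda t}\theta)\,dt$; these yield the same bound.
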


To solve \eqref{eq:func_eqBA} we introduce some notation. Let $\pi_u, \pi_s \colon X \to X$ be the spectral projections associated 
with $A_u$ and $A_s$ and let $X_u := \pi_u(X)$, $X_s = \pi_s(X)$.
Then for any $x \in X$ we can write $x = (x_1, x_2)$ with 
$x_1 = \pi_u(x) \in X_u$ and $x_2 = \pi_s(x) \in X_s$.
Indeed, we can further decompose $X_u$ by noting that 
each of the eigenvalues $\lambda_j$, $1 \leq j \leq d$ has
an associated spectral projection operator $\pi_j \colon X \to X$.
Define the linear subspaces $X_j = \pi_j(x)$ and 
note that $X_u = X_1 \oplus \ldots \oplus X_d$.  Moreover, 
since $\xi_j$ spans $X_j$ we have that each 
$x_j \in X_j$ can be written uniquely as $x_j = c_j \xi_j$
for some scalar $c_j$.

Define 
\[
\mu^* = \min_{1 \leq j \leq d} \mbox{real}(\lambda_j), 
\]
and note that this is $\mu^* = \min_{1 \leq j \leq d} \lambda_j$
as the unstable eigenvalues are assumed to be real.

We look for a solution of Equation \eqref{eq:func_eqBA} in the form 
\[
P(\theta) = P_1(\theta) + H(\theta),
\]
where $H(0) = \partial H/\partial \theta_j(0) = 0$ for $1 \leq j \leq d$.
Note that in this context the left hand side of 
Equation \eqref{eq:func_eqBA} becomes
\begin{eqnarray*}
F(P) &=& F(P_1 + H) \\
 &=& A P_1 + AH + N(P_1 + H) 
\end{eqnarray*}
while the right hand side is 
\begin{eqnarray*}
DP(\theta) A_u \theta  &=& DP_1(\theta) A_u \theta + DH(\theta) A_u \theta \\
&=& \lambda_1 \theta_1 s_1 \xi_1 + \ldots + \lambda_d \theta_d s_d \xi_d + DH(\theta) A_u \theta.
\end{eqnarray*}
We note that 
\begin{eqnarray*}
A P_1(\theta) &=&  
s_1 \theta_1 A \xi_1 + \ldots + s_d \theta_d A \xi_d \\
&=& \lambda_1 \theta_1 s_1 \xi_1 + \ldots + \lambda_d \theta_d s_d \xi_d,
\end{eqnarray*}
as $\lambda_1, \xi_j$ are eigenvalue/eigenvector pair for $A$.
After cancelation of these terms on the left and the right, Equation \eqref{eq:func_eqBA}
becomes
\begin{equation}\label{eq:prePhiEqn}
DH(\theta) A_u \theta - AH(\theta) = N(P_1(\theta) + H(\theta)).
\end{equation}
The left hand side of this expression defines a boundedly 
invertible linear operator on a suitable space of functions,
as the next lemma shows. For $P \colon B_1 \to X$ analytic 
define the norm
\[
\| P \|_1 := \sum_{|m| = 0}^\infty \| p_m \|
\] 
where $p_m \in X$ for each $m \in \mathbb{N}^d$.
Note that 
\[
\sup_{\theta \in B_1} \| P(\theta)\| \leq \| P \|_1,
\]
where the norm on the left is the norm on $X$ and the inequality 
holds even in the case that one is infinite.  
We employ  the $\| \cdot \|_1$ norm below (and throughout the 
remainder of the paper) in spite of the 
fact that this is less fine than the supremum norm, due to the fact that 
it makes numerical calculations and some formal manipulations easier.
Define 
\[
\mathcal{H} = \left\{ H  \in C^\omega(B_1, X) \, | \, H(0) = \frac{\partial H}{\partial \theta_j} (0)
 = 0, \mbox{ and } \| H \|_1 < \infty
 \right\}.
 \]

\begin{lemma}
Suppose that the unstable eigenvalues $\lambda_1, \ldots, \lambda_d$ 
are non-resonant in the sense of Definition \ref{def:nonRes}.  Then the 
linear operator given by 
\[
\mathcal{L}[H](\theta) = DH(\theta) A_u \theta - A H(\theta),
\]
is boundedly invertible on $\mathcal{H}$.  
\end{lemma}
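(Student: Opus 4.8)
The plan is to diagonalize $\mathcal{L}$ with respect to the Taylor expansion in $\theta$ and to exhibit an explicit bounded inverse acting coefficient-by-coefficient. First I would write a general $H \in \mathcal{H}$ as $H(\theta) = \sum_{|m| \geq 2} h_m \theta^m$ with $h_m \in X$ and $\|H\|_1 = \sum_{|m|\geq 2} \|h_m\|$; the constraints $H(0) = \partial_{\theta_j} H(0) = 0$ say precisely that the sum starts at order $|m| = 2$. Using the Euler-type identity $\theta_j \partial_{\theta_j} \theta^m = m_j \theta^m$ one gets
\[
\mathcal{L}[H](\theta) = \sum_{|m| \geq 2} \big[ (m \cdot \lambda)\, \mathrm{I} - A \big] h_m\, \theta^m, \qquad m \cdot \lambda := m_1 \lambda_1 + \ldots + m_d \lambda_d .
\]
Hence solving $\mathcal{L}[H] = G$ for a given $G(\theta) = \sum_{|m|\geq 2} g_m \theta^m \in \mathcal{H}$ reduces to solving the decoupled family $[(m\cdot\lambda)\mathrm{I} - A] h_m = g_m$, one scalar-in-$m$ linear equation in $X$ for each multi-index with $|m| \geq 2$.

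Next I would verify that $m \cdot \lambda$ lies in the resolvent set of $A$ for every $|m| \geq 2$, using the splitting $X = X_u \oplus X_s$ with the bounded spectral projections $\pi_u, \pi_s$ and the block form $A = \mathrm{diag}(A_u, A_s)$. On $X_s$: since each $\lambda_j$ is real and positive, $m\cdot\lambda \geq |m|\,\mu^* \geq 2\mu^* > 0$, while the spectrum of $A_s$ sits in the open left half plane; dissipativity, i.e. $\|e^{A_s t}\| \leq M e^{-\mu_* t}$, gives via the Laplace representation $(z\mathrm{I} - A_s)^{-1} = \int_0^\infty e^{-zt} e^{A_s t}\, dt$ (valid for $\mathrm{real}(z) > -\mu_*$) the uniform bound $\|[(m\cdot\lambda)\mathrm{I} - A_s]^{-1}\|_{B(X_s)} \leq M/(m\cdot\lambda + \mu_*) \leq M/\mu_*$. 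On the finite-dimensional block $X_u$ (where $A_u$ is diagonal), $[(m\cdot\lambda)\mathrm{I} - A_u]$ is invertible iff $m\cdot\lambda \neq \lambda_j$ for all $j$, which is exactly what Definition \ref{def:nonRes} forbids; moreover $\|[(m\cdot\lambda)\mathrm{I} - A_u]^{-1}\| = \max_j |m\cdot\lambda - \lambda_j|^{-1} \to 0$ as $|m| \to \infty$, so only finitely many orders are relevant and the supremum over $|m|\geq 2$ is finite. Combining the two blocks yields a single constant $C$ with $\|[(m\cdot\lambda)\mathrm{I} - A]^{-1}\|_{B(X)} \leq C$ for all $|m| \geq 2$.

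Finally I would set
\[
\mathcal{L}^{-1}[G](\theta) := \sum_{|m| \geq 2} [(m\cdot\lambda)\mathrm{I} - A]^{-1} g_m\, \theta^m,
\]
and check: (i) $\mathcal{L}^{-1}$ maps $\mathcal{H}$ to $\mathcal{H}$ boundedly, since $\|\mathcal{L}^{-1}[G]\|_1 \leq C \sum_{|m|\geq 2}\|g_m\| = C\|G\|_1$, and the series starts at order $2$; (ii) each coefficient lies in the domain of $A$ (it is in the range of a resolvent), and applying $A$ termwise again produces a $\|\cdot\|_1$-summable series because $A[(m\cdot\lambda)\mathrm{I}-A]^{-1} = (m\cdot\lambda)[(m\cdot\lambda)\mathrm{I}-A]^{-1} - \mathrm{I}$ is uniformly bounded in $m$ by the same estimates, so $\mathcal{L}^{-1}[G]$ indeed lands in the natural domain of $\mathcal{L}$; (iii) $\mathcal{L}\mathcal{L}^{-1} = \mathrm{id}$ and $\mathcal{L}^{-1}\mathcal{L} = \mathrm{id}$, both immediate by matching Taylor coefficients. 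This exhibits $\mathcal{L}$ as a bijection from its domain in $\mathcal{H}$ onto $\mathcal{H}$ with bounded inverse, i.e. boundedly invertible.

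I expect the only real obstacle to be the uniform-in-$m$ resolvent bound in the second step. The $X_s$ part is a clean semigroup estimate, but one must be careful that the constants $M, \mu_*$ are those genuinely inherited by the restricted generator $A_s$ and that the spectral projection onto the finite unstable block is actually bounded — this is where sectoriality and the spectral gap enter. The $X_u$ part hinges on the observation (already made after Definition \ref{def:nonRes}) that non-resonance excludes only finitely many orders, so the supremum over $|m|\geq 2$ is attained and finite; everything else is bookkeeping on Taylor coefficients.
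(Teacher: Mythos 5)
Your proposal is correct, and for the unstable block it coincides with the paper's argument: both project onto the one--dimensional eigenspaces, match Taylor coefficients, divide by $m\cdot\lambda-\lambda_j$, and use non-resonance plus the fact that $m\cdot\lambda\to\infty$ to get a uniform bound. The difference lies in the stable (infinite--dimensional) block. The paper does not invert $(m\cdot\lambda)\mathrm{I}-A_s$ coefficient-by-coefficient; instead it substitutes $\theta\mapsto e^{\lambda t}\theta$, recognizes the projected equation as a linear ODE along the flow, and writes the solution via Duhamel's formula as the integral operator $\mathfrak{L}_s^{-1}[E_s](\theta)=\int_0^\infty e^{A_s t}\pi_s[E(e^{-\lambda t}\theta)]\,dt$, bounded by $M/(2\mu^*+\mu_*)$; only at the very end does it remark that expanding $E_s$ in powers of $\theta$ and exchanging sum and integral yields the $\|\cdot\|_1$ bound. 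Your route works entirely on the Taylor side and invokes the Laplace representation $(z\mathrm{I}-A_s)^{-1}=\int_0^\infty e^{-zt}e^{A_s t}\,dt$ for $\mathrm{real}(z)>-\mu_*$ to get the uniform resolvent bound $M/(m\cdot\lambda+\mu_*)$ — which is exactly what the paper's integral formula produces once you expand it, so the two computations are the same integral read in different orders. What your version buys is a single unified formula $[(m\cdot\lambda)\mathrm{I}-A]^{-1}$ for all of $X$ and, importantly, an explicit treatment of the domain issue: your identity $A[(m\cdot\lambda)\mathrm{I}-A]^{-1}=(m\cdot\lambda)[(m\cdot\lambda)\mathrm{I}-A]^{-1}-\mathrm{I}$ shows that applying the unbounded operator $A$ termwise to $\mathcal{L}^{-1}[G]$ still gives a summable series, a point the paper passes over. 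What the paper's version buys is that it never needs to assert a priori that $m\cdot\lambda$ lies in the resolvent set of $A_s$ — the Duhamel construction produces the inverse directly from the semigroup decay. One minor caveat in your write-up: $\|[(m\cdot\lambda)\mathrm{I}-A_u]^{-1}\|=\max_j|m\cdot\lambda-\lambda_j|^{-1}$ holds only in a norm adapted to the eigenbasis; in general you pick up the factors $\|\pi_j\|$ that the paper carries explicitly. This does not affect finiteness of the constant.
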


\begin{proof}
Let $E \in \mathcal{H}$.  Taking the spectral projections we write
\[
E(\theta) = 
\left(
\begin{array}{c}
E_u(\theta) \\
E_s(\theta) \\
\end{array}
\right),
\]
where $E_u = E_1 + \ldots E_d$.
Consider the projected equations
\begin{equation}\label{eq:L1_def} 
DH_u(\theta) A_u \theta + A_u H_u(\theta) = E_u(\theta), 
\end{equation}
and 
\begin{equation}\label{eq:L2_def} 
DH_s(\theta) A_u \theta + A_s H_s(\theta) = E_s(\theta).
\end{equation}
We begin by solving Equation \eqref{eq:L1_def} term by term
in the sense of power series.
    
Since $E$ is analytic we can write
\[
E(\theta) = \sum_{|m| = 2}^\infty e_m \theta^m,
\]
for some $e_m \in X$, where the series converges absolutely 
and uniformly for all $\theta \in B_1$, and in fact 
\[
\sum_{|m| = 2}^\infty \|e_m\| = \| E \|_1 < \infty,
\] 
as $E \in \mathcal{H}$.
Taking projections, we write 
\[
E_u(\theta) = E_1(\theta) + \ldots + E_d(\theta),
\]
where
\[
E_j(\theta) = \sum_{|m| = 2}^\infty e_m^j \theta^m
= \sum_{|m| = 2}^\infty b_{m}^j \xi_j \theta^m,
\]
with  $e_{m}^j = \pi_j(e_m)$ and $e_{m}^j = b_m^j \xi_j$
for some unique scalars $b_m^j$.

Proceeding formally, we look for $H_u \colon B_1 \to X_u$ in the form 
\[
H_u(\theta) = H_1(\theta) + \ldots + H_d(\theta),
\]
where $H_j \colon B_1 \to X_j$ is given by 
\[
H_{j}(\theta) = \sum_{|m| = 0}^\infty c_{m}^j \xi_j \theta^m,
\]
for some unknowns scalar coefficients $c_{m}^j$.
Projecting Equation \eqref{eq:L1_def} onto $X_j$ for $1 \leq j \leq d$, and 
noting that $A_u$ is a diagonal matrix leads to the 
equations
\[
D H_j(\theta) A_u \theta + \lambda_j H_j(\theta) = E_j(\theta),
\]
so, upon making the power series substitutions, we have that 
\[
\sum_{|m| = 2}^\infty 
(m_1 \lambda_1 +\ldots m_d \lambda_d - \lambda_j) c_{m}^j \xi_j  \theta^m= 
\sum_{|m| = 2}^\infty b_m^j \xi_j \theta^m.
\]
Matching like powers of $\theta$ leads to 
\[
(m_1 \lambda_1 +\ldots m_d \lambda_d - \lambda_j) c_{m}^j \xi_j
=  b_m^j \xi_j,
\]
from which we conclude that 
\[
c_{m}^j = \frac{1}{m_1 \lambda_1 +\ldots m_d \lambda_d - \lambda_j} b_{m}^j,
\]
for all $1 \leq j \leq d$ and $|m| \geq 2$.

Motivated by the discussion above we define the linear solution operators
\[
\mathfrak{L}_j^{-1}[E](\theta) = 
\sum_{|m| = 2}^\infty 
\frac{1}{m_1 \lambda_1 + \ldots + m_d \lambda_d - \lambda_j} 
 \,  \pi_j(e_m)  \theta^m,  \quad \quad \quad 1 \leq j \leq d,
\]
for $1 \leq j \leq d$.  Note that that $\mathfrak{L}_j^{-1}$ are well defined as
the $\lambda_1, \ldots, \lambda_d$ are non-resonant.  Defining 
\[
C_j = \max_{|m| \geq 2} | m_1 \lambda_1 + \ldots + m_d \lambda_d - \lambda_j |^{-1},
\] 
we see that the solution operators are bounded on $\mathcal{H}$ as
\begin{eqnarray*}
\|\mathfrak{L}_j^{-1}[E_u](\theta)\|_1 &=& \sum_{|m| = 2}^\infty 
\frac{1}{|m_1 \lambda_1 + \ldots + m_d \lambda_d - \lambda_j|} 
 \| \pi_j(e_m) \| \\
&\leq& C_j \sum_{|m| = 2}^\infty\| \pi_j\|  \| e_m \| \\
&\leq& C_j \|\pi_j\| \| E\|_1,
\end{eqnarray*}
which is bounded due to the fact that  
the spectral projections are bounded linear operators and $E \in \mathcal{H}$.
Now let $H_j := \mathfrak{L}_j^{-1}[E]$ for $1 \leq j \leq d$, 
and $H_u = H_1 + \ldots + H_d$.  
Then $DH_j \in \mathcal{H}$ for each $1 \leq j \leq d$, and 
working the argument backwards shows that $H_u$ 
is indeed a  solution of Equation  \eqref{eq:L1_def}.

In order to solve Equation \eqref{eq:L2_def}, 
consider the change of variables
\[
\theta \to e^{\lambda t} \theta,
\]
with $\theta \in B_1$ fixed, 
and define 
\[
x(t) = H_s(e^{\lambda t} \theta),
\quad \quad \quad \mbox{and} \quad \quad \quad 
p(t) = E_s(e^{\lambda t}  \theta).
\]
Suppose that $x(t)$ is a solution of the differential equation 
\begin{equation}\label{eq:auxODE}
x' - A_s x = p,
\end{equation}
for all $t \leq 0$. Then $x(0)$ is a solution Equation 
\eqref{eq:L2_def}.  
But solutions of Equation \eqref{eq:auxODE} are
given by Duhamel's formula.
More precisely, we begin by multiplying both 
sides by $e^{-A_s t}$ and integrating form $t_0$ to $t_1$ to obtain 
\begin{equation}\label{eq:intForm1}
e^{-A_s t_1} x(t_1) - e^{-A_s t_0} x(t_0) = 
\int_{t_0}^{t_1} e^{-A_s t} E_s(e^{\lambda t} \theta) \, dt.
\end{equation}
Assuming that $H_s \in \mathcal{H}$ (so that $H$ is zero to second order)
we have that 
\begin{eqnarray*}
\lim_{t_0 \to -\infty} e^{-A_s t_0} x(t)
&=& \lim_{t_0 \to \infty} e^{A_s t_0} x(-t_0) \\
&=& \lim_{t_0 \to \infty} e^{A_s t_0} H_s(e^{-\lambda t_0} \theta) \\
&=& 0
\end{eqnarray*}
as
\begin{eqnarray*}
\|e^{A_s t_0} H_s(e^{-\lambda t_0} \theta) \| &\leq&
\| e^{A_s t_0} \| \| H_s(e^{-\lambda t_0} \theta \| \\ 
&\leq& M e^{-\mu_*t_0} \left(e^{-\mu^* t_0}\right)^2 \| H_s \| \\
&\leq& M e^{-(\mu_* + 2 \mu^*) t_0} \| H_s \|
\end{eqnarray*}
and $\| H_s \| < \infty$.
Then taking $t_0 \to -\infty$ in Equation \eqref{eq:intForm1}
and $t_1 = 0$ gives 
\[
x(0) = \int_{-\infty}^{0} e^{-A_s t} E_s(e^{\lambda t} \theta) \, dt = 
 \int_{0}^{\infty} e^{A_s t} E_s(e^{-\lambda t} \theta) \, dt,
\]
after switching the limits of integration and changing 
$t \to -t$.

Motivated by this discussion we define the linear solution operator 
\[
\mathfrak{L}^{-1}_s[E_s](\theta) := \int_0^\infty e^{A_s t} \pi_s\left[E(e^{-\lambda t} \theta)\right] \, dt.
\]
Let $H_s := \mathfrak{L}^{-1}_s[E_s]$, and 
note that
\[
\| H_s \| \leq \frac{M}{2 \mu^* + \mu_*} \| E_s \|,
\]
as the integrand satisfies
\[
\| e^{A_s t} E_s(e^{-\lambda t} \theta)\| 
\leq M e^{-\mu_* t} e^{-2 \mu^* t} \|E_s\|,
\]
i.e. the operator is well defined and bounded.  
Moreover we see that $H_s$ is 
analytic by Morera's Theorem.  To see that $H_s$ 
is zero to second order we 
differentiate under the integral 
and note that $E_s(0) = 0$ and 
$\partial E_s/\partial \theta_j(0) = 0$ for $1 \leq j \leq d$.
To see that $\|H_s \|_1 < \infty$ we expand $E_s$ 
as a power series inside the formula for
$\mathfrak{L}^{-1}_s[E_s]$ and,
after exchanging the sum and the integral, bound  
$\| H_s\|_1$ in terms of $\| E_s \|_1$.  We then check 
by differentiating that $H_s$ so defined solves the desired equation. 
\end{proof}

\begin{proof}[Proof of Theorem \ref{thm:smt}]
Let
\[
\max_{1 \leq j \leq d} |s_j| = s.
\]
Then for any $H \in \mathcal{H}$ with $\|H\| \leq r$ we have
that 
\[
\sup_{\theta \in B_1} \| P_1(\theta) + H(\theta) \| \leq s + r. 
\]
Suppose now that $\lambda_1, \ldots, \lambda_d$ are
non-resonant and choose $s, r > 0$ so that 
\begin{equation}\label{eq:s_plus_r_cond}
s + r < R,
\end{equation}
where $R$ is the radius of convergence of the series
expansion of $N$.
Then the nonlinear operator $\Phi \colon \mathcal{H} \to \mathcal{H}$ 
given by 
\[
\Phi[H](\theta) = \mathfrak{L}^{-1}\left[
N(P_1(\theta) + H(\theta))
\right],
\]
is well defined for any $s, r$ satisfying the Equation 
\eqref{eq:s_plus_r_cond}.  Note also that $N(P_1 + H(\theta))$,
is a composition of analytic functions, hence is analytic on $B_1$.
Then $N \circ (P_1 + H) \in \mathcal{H}$ as is seen by evaluating
$N(P_1(\theta) + H(\theta))$ and its first partials at $\theta = 0$.

The rest of the argument hinges on the fact that 
$H$ is a fixed point of $\Phi$ if and only if
$H$ is a solution of Equation \eqref{eq:prePhiEqn}, 
if and only if $P = P_1 + H$ is a solution of Equation 
\eqref{eq:func_eq}.
In order to establish that $\Phi$ has a fixed point we employ
the contraction mapping theorem.
For the remainder of the argument we suppose that $s, r > 0$
satisfy Equation \eqref{eq:s_plus_r_cond}.

First, note that since $N$ is zero to second order at the origin 
there are $M_1, M_2 > 0$ so that 
\[
\| N(x) \| \leq M_1 \| x\|^2,
\] 
and
\[
\| D N(x) \|_{B(X)} \leq M_2 \| x \|, 
\]
for all $x \in X$ with $\| x \| < R$.  
(Explicit constants can be obtained for example 
by adapting the argument of Lemma $2.5$ of 
\cite{MR3282485}).

Then for any $H \in \mathcal{H}$ with $\|H\| \leq r$ we have 
\begin{eqnarray*}
\|\Phi[H]\| &\leq& \| \mathfrak{L}^{-1} N[P_1 + H]\| \\
&\leq& \|\mathfrak{L}^{-1}\| M_1 (s+r)^2.
\end{eqnarray*}
Also note that if $\|H\| \leq r$ then we 
have that $\Phi$ is Fr\'{e}chet differentiable at $H$
with 
\[
D \Phi[H] v = \mathfrak{L}^{-1} DN(P_1 + H)v,
\]
for $v \in \mathcal{H}$, and the bound
\[
\|D \Phi[H] \| \leq \|\mathfrak{L}^{-1}\| M_2(s+r).
\]
Now choose $H_1, H_2 \in \mathcal{H}$ with
$\|H_1\|, \|H_2\| \leq r$.  We have that  
\begin{eqnarray*}
\| \Phi[H_1] - \Phi[H_2] \| &\leq& \sup_{H \leq r} \| D\Phi[H]\| \|H_1 - H_2\| \\
\| \mathfrak{L}^{-1}\| M_2 (s+r) \|H_1 - H_2 \|.
\end{eqnarray*}  
Suppose that $\delta > 0$ is small enough that 
\[
2 \delta < R,
\]
\[
 \|\mathfrak{L}^{-1}\| M_1 4 \delta^2 \leq \frac{\delta}{2},
\]
and
\[
2 \| \mathfrak{L}^{-1}\| M_2 \delta  < 1.
\]
Then for any choice of $s_1, \ldots, s_d, r > 0$ so that 
\[
s, r \leq \frac{\delta}{2},
\]
we have that $\Phi$ is a contraction on the complete
metric space
\[
U_r = \left\{ H \in \mathcal{H} \, | \, \| H \| \leq r \right\}.
\]
Now the contraction mapping theorem implies that
there exists a unique $\tilde H \in U_r$ so that $\Phi[\tilde H] = \tilde H$.
It follows that 
\[
P(\theta) = P_1(\theta) + \tilde H(\theta),
\]
satisfies Equation \eqref{eq:func_eq}.  Since $P_1$ satisfies 
the constraints of Equation \eqref{eq:linear_constraints} and $\tilde H$ is 
zero to second order at zero, we have that $P$ satisfies 
the first order constraints as well.  Then by Lemma \ref{lem:flowInv},
the image of $P$ is the desired local unstable manifold.
\end{proof}

\begin{remark}[Uniqueness]
{\em
The solution of Equation \eqref{eq:func_eq} obtained in the proof of 
Theorem \ref{thm:smt} is up to the choice of the eigenvectors and their
scalings. In other words we obtain parameterizations of larger or smaller 
local unstable manifolds by choosing larger or smaller scalings 
$s_1, \ldots, s_d$. The non-uniquness is exploited in  numerical computations,
and in theoretical considerations of the decay rates of the Taylor 
coefficients of $H$.
}
\end{remark}

\subsection{Non-uniqueness and Taylor coefficient decay: 
rescaling the unstable eigenvectors} \label{sec:decayRates}

Suppose that $s_j = 1$ for $j = 1,\ldots,d$ in \eqref{eq:lin_BA} are the scalings for the eigenvectors
and that $P \colon B_1 \subset \mathbb{R}^d \to X$ is a corresponding 
analytic solution of Equation \eqref{eq:func_eqBA}.
Now let $s_1, \ldots, s_d > 0$, $s_j\neq 1$, $j = 1,\ldots,d$ and consider the function 
\[
\hat P(\theta_1, \ldots, \theta_d) = P(s_1 \theta_1, \ldots, s_d \theta_d),
\] 
defined for $\theta_j s_j \leq 1$.  Note that 
\[
\hat P(0) = \tilde x,
\]
and that 
\[
\frac{\partial \hat P}{\partial \theta_j} (0) = s_j \frac{\partial  P}{\partial \theta_j}(0) = s_j \xi_j,
\]
so that $\hat P$ satisfies the first order constraints of Equation \eqref{eq:lin_BA}, a scaled version \eqref{eq:linear_constraints}.  Moreover we have that 
\begin{eqnarray*}
F[\hat P(\theta_1, \ldots, \theta_d)] &=& F[P(s_1 \theta_1, \ldots, s_d \theta_d)] \\
&=& \lambda_1 (s_1 \theta_1) 
\frac{\partial}{\partial \theta_1} P(s_1 \theta_1, \ldots, s_d \theta_d)
+ \ldots + 
\lambda_d (s_d \theta_d) 
\frac{\partial}{\partial \theta_d} P(s_1 \theta_1, \ldots, s_d \theta_d) \\
&=& 
 \lambda_1  \theta_1 
\frac{\partial}{\partial \theta_1} \hat P(s_1 \theta_1, \ldots, s_d \theta_d)
+ \ldots + 
\lambda_d \theta_d 
\frac{\partial}{\partial \theta_d} \hat P(s_1 \theta_1, \ldots, s_d \theta_d).
\end{eqnarray*}
In other words $\hat P$ is a solution of Equation \eqref{eq:func_eqBA} corresponding
the the rescaled choice of eigenvectors.  Since the solution of 
Equation \eqref{eq:func_eqBA} is unique up to this choice of scalings we 
see that all solutions of Equation \eqref{eq:func_eqBA} are obtained in 
this manor.

\begin{remark}[Rescaling and Taylor coefficient decay rates]\label{rem:decayRates}
{\em
Consider the power series expansion 
\[
P(\theta) = \sum_{|m| = 0}^\infty p_m \theta^m,
\] 
and a choice of scalings $s_1, \ldots, s_d \neq 0$.  Then 
the rescaled solution $\hat P$ has power series given by  
\begin{eqnarray*}
\hat P(\theta) &=& P(s_1 \theta_1, \ldots, s_d \theta_d) \\
&=& \sum_{|m| = 0}^\infty p_m s_1^{m_1} \ldots s_d^{m_d} \theta^m,
\end{eqnarray*}
i.e. given the Taylor coefficient sequence $\{p_m \}_{|m| = 0}^\infty$ of one solution 
of Equation \eqref{eq:func_eqBA}, the power series coefficients 
$\{\hat p_{m}\}_{|m| = 0}^\infty$ of all other solutions
of Equation \eqref{eq:func_eqBA} are obtained by the transformation
\begin{equation} \label{eq:rescaledCoefficients}
\hat p_{m} = s_1^{m_1} \ldots s_d^{m_d} p_{m}.
\end{equation}
This is a useful observation.  For example
given the Taylor coefficients of one solution $P$, Equation \eqref{eq:rescaledCoefficients}
can be used to obtain a solution with more desirable decay rates (faster or slower decay).
}
\end{remark}

\subsection{Formalism and homological equations} \label{sec:homological}
We now return to Equation \eqref{eq:abstractODE}
under the assumptions given in the beginning of Section \ref{sec:aPrioriExist}.
Using the assumption that $(X, *)$ is a Banach 
algebra leads to an elegant formalism for Equation \eqref{eq:func_eq}. 
We begin by developing a few formulas.  

First consider
\begin{equation}\label{eq:powerSeriesAnstaz}
P(\theta) = \sum_{|m| = 0}^\infty p_m \theta^m,
\end{equation}
the power series of some analytic function $P \colon B_1 \to X$, 
and let $Q_2 \colon X \to X$ be the quadratic 
function defined by 
\[
Q_2(a) =  c * a * a,
\] 
where $c \in X$ is fixed.

A power series for the composition $Q_2 \circ P \colon B_1 \to X$ 
is given by 
\begin{eqnarray*}
Q_2(P(\theta)) &=& c* P(\theta) * P(\theta)  \\
&=& c* \left(   \sum_{|m| = 0}^\infty p_m \theta^m\right) *
\left(  \sum_{|m| = 0}^\infty p_m \theta^m \right) \\
&=& \sum_{|m| = 0}^\infty 
\sum_{m_1 + m_2 = m} c* p_{m_1} * p_{m_2} \theta^m \\
\end{eqnarray*}
Let $(Q_2 \circ P)_{m} \in X$ denote the power series
coefficients of the analytic function $Q_2 \circ P$. 
Matching like powers of $\theta$ leads to 
\[
(Q_2 \circ P)_m =  \sum_{m_1 + m_2 = m}  c* p_{m_1} * p_{m_2} = 2 c* p_0 * p_{m}  + 
\sum_{m_1 + m_2 = m}  \delta_{m_1, m_2}^{m} c* p_{m_1} * p_{m_2},
\]
where 
\[
\delta_{m_1, m_2}^{m} :=
\begin{cases}
0 & \mbox{if} \quad m_1 = m \quad \mbox{or} \quad m_2 = m   \\
1 & \mbox{otherwise}
\end{cases}.
\]
We write 
\[
p_m \diamond p_m :=
\sum_{m_1 + m_2 = m}  \delta_{m_1, m_2}^{m} c* p_{m_1} * p_{m_2},
\]
to denote the sum over terms with no $p_{m}$ dependance.
Noting that 
\[
2 c * p_0 * p_m = D Q_2(p_0) p_{m},
\]
we define the operator
\[
\tilde Q_2(P)_{m} : = c* p_m \diamond p_m,
\]
and have the formula
\begin{equation} \label{eq:quadraticAutoDiff}
(Q_2 \circ P)_{m} =  D Q_2(p_0) \, p_{m} + \tilde Q_2(P)_{m},
\end{equation}
where $ \tilde Q_2(P)_{m}$ depends on neither $p_m$ nor $p_0$.

More generally let $Q_n \colon X \to X$
be the monomial defined by 
\[
Q_n(a) = c* a^{\ast n},
\]
and consider the analytic function $Q_n \circ P \colon B_1 \to X$.
A nearly identical computation to the one given above shows
 that the $m$-th coefficient of the 
power series expansion of $Q_n \circ P$ is given by 
\begin{equation}\label{eq:monomialAutoDiff}
(Q_n \circ P)_{m} = D Q_n(p_0) p_m  + \tilde Q_n(P),
\end{equation}
where
\[
DQ_n(p_0) p_{m} = n p_0^{\ast n-1}  * p_m,
\] 
and we define 
\begin{equation}\label{eq:def_Qtilde}
\tilde Q_n(P)_{m} = \sum_{m_1 + \ldots + m_n = m} 
\delta_{m_1, \ldots, m_n}^{m} p_{m_1} * \ldots * p_{m_n},
\end{equation}
with
\[
\delta_{m_1, \ldots, m_n}^{m} = 
\begin{cases}
0 & \mbox{if } m_j = m \mbox{ for some } 1 \leq j \leq n \\
1 & \mbox{otherwise}
\end{cases}.
\]
We extend this formalism to the special case of $n=1$ 
by letting
$Q_1 \colon X \to X$ be 
\[
Q_1(a) := c*a,
\]
with $c \in X$ fixed.  The formula
\[
Q_1(P(\theta))_m = D Q_1(p_0) p_m + \tilde Q_1(P)_m,
\]
holds in this case as well once we note that 
\[
D Q_1(p_0) p_m = c * p_m,
\]
and define $\tilde{Q}_1(P)_m = 0$ for all $m \in \mathbb{N}^{d}$.

\bigskip

Assume in \eqref{eq:abstractODE} is given by a polynomial. This is also the case we study in the example described in \eqref{eq:gen_ODEsys} in the introduction.  
That is we can write the vector field as 
\[
F(x) = Ax + \sum_{n = 1}^s c_n * x^{*_n},
\]
where for $1 \leq n \leq s$ the  $c_n \in X$ are fixed.
We seek a solution of Equation \eqref{eq:func_eqBA} 
under these conditions.  

Since, under the hypotheses of Theorem \ref{thm:smt}, there exists an analytic 
solution of Equation \eqref{eq:func_eqBA}, we look for  
$P \colon B_1 \to X$ a satisfying the \textit{ansatz} of Equation \eqref{eq:powerSeriesAnstaz}. 
Imposing the first order constraints
given in Equation \eqref{eq:lin_BA} gives 
that the first order coefficients of the power series solution are given by 
\[
p_0 = \tilde x \quad \text{and} \quad p_{e_j} =  s_j \xi_j \quad \quad  \mbox{for } 1 \leq j \leq d,
\]
where $e_j$, $1 \leq j \leq d$ is the standard basis for the 
multi-indices $m \in \mathbb{N}^d$ with $|m| = 1$. Considering the left hand side of Equation \eqref{eq:func_eqBA} subject
to the power series ansatz leads to
\begin{eqnarray*}
F(P(\theta)) &=&  A P(\theta) + \sum_{n=1}^s c_n * (P(\theta))^{\ast n} \\
&=& AP(\theta) + \sum_{n=1}^s (Q_n \circ P)(\theta), 
\end{eqnarray*}
and after matching like powers of $\theta$ we see that 
the $m$-th power series coefficient of $F \circ P$ is 
given by 
\begin{equation}\label{eq:g_circ_P_alpha}
(F \circ P)_{m} = A p_{m} + \sum_{n=1}^s DQ_n(p_0) p_m
+ \sum_{n=1}^s (\tilde Q_n \circ P)_{m} = Dg(p_0)p_m + \sum_{n=1}^s  \tilde Q_n (P)_{m}.
\end{equation}

Similarly the right hand side of Equation \eqref{eq:func_eqBA} is, in light of 
the power series ansatz,  given by  
\begin{equation} \label{eq:DPLambdaTheta_alpha}
\lambda_1 \theta_1 \frac{\partial}{\partial \theta_1} P(\theta) + \ldots + 
\lambda_d \theta_d \frac{\partial}{\partial \theta_1} P(\theta)  =
\sum_{|m|=0}^\infty (m_1 \lambda_1 +\ldots + m_d \lambda_d) p_{m} \theta^{m}.
\end{equation}
Equating like powers of $\theta$ in 
Equation \eqref{eq:g_circ_P_alpha} and Equation \eqref{eq:DPLambdaTheta_alpha}
gives that for each $m \in \mathbb{N}^d$ with $|m| \geq 2$ the 
coefficient $p_m \in X$ is a solution of the equation 
\begin{equation} \label{eq:homologicalEqnBanachAlg}
[DG(p_0) - (m_1 \lambda_1 +\ldots + m_d \lambda_d) \mbox{Id}] p_m 
= -  \sum_{n=1}^s  \tilde Q_n (P)_{m},
\end{equation}
where the right hand side is in $X$ and depends on neither $p_m$ nor $p_0$.

Equation \eqref{eq:homologicalEqnBanachAlg} gives one equation for each $m$,
and we refer to these as the \textit{homological equations} for $P$.  We note that 
since $p_0 = \tilde x$, the linear operator $A_m$
defined by  
\[
A_{m} = DF(p_0) - (\lambda_1 +\ldots + \lambda_d) \mbox{Id},
\]
is a boundedly invertible linear operator on $X$ assuming that 
\[
m_1\lambda_1 +\ldots + m_d \lambda_d \notin \mbox{spec}(D F(\tilde x)).
\]
But $m_1, \ldots, m_d \geq 0$ and the $\lambda_1, \ldots, \lambda_d$
are positive real numbers, so that 
$m_1\lambda_1 +\ldots + m_d \lambda_d \in (0, \infty)$ for all $|m| \geq 2$.   
Since $\lambda_1, \ldots, \lambda_d$ are the only elements of the unstable spectrum 
of $DF(\tilde x)$ we have that $A_m$ is an isomorphism as long as  
\[
m_1\lambda_1 +\ldots + m_d \lambda_d \neq \lambda_j
\]
for $1 \leq j \leq d$, i.e. as long as the unstable eigenvalues are non-resonant in 
the sense of Definition \ref{def:nonRes}.  Then the homological equations
are uniquely and recursively solvable to all orders, and $P$ is formally 
well defined and unique in the sense of power series (once the (scaled) eigenvectors
$\xi_1, \ldots, \xi_d$ are fixed).

\subsection{Zero finding problem and the Newton-like operator for the unstable manifold}\label{sec:zero_fin}
In this section we interpret \eqref{eq:homologicalEqnBanachAlg} as a zero finding problem for the Taylor coefficients of the unstable manifold parameterization.
That is, we define a map $f$ such that $P$ given by \eqref{eq:powerSeriesAnstaz} solves \eqref{eq:func_eqBA} together with \eqref{eq:lin_BA} if and only of $f(p) = 0$, where $p$ is the sequence of coefficient sequences of $P$.

To begin consider
the set of all infinite sequences $p = \{ p_m \}_{m \in \mathbb{N}^d}$ 
with $p_m \in X$.  The sequence space is endowed with the norm
\[
\| p \|_1 := \sum_{|m| = 0}^\infty \|p_m\|,
\]
where the norm inside the sum is the $X$ norm.  Note that 
if $\{p_m\}_{m \in \mathbb{N}^d}$ are the Taylor coefficients of an 
analytic function on $\mathbb{B}_1$ then this is the $\mathcal{H}$
norm defined in Section \ref{sec:aPrioriExist}.  Let
\[
\ell^1_d(X) := \left\{ \{p_m\}_{m \in \mathbb{N}^d} \,,\hspace{3pt} p_m\in X: \, 
\| p \|_1 < \infty 
\right\},
\]
and note that this is a Banach space.

\paragraph{Zero finding map}
Returning to the discussion of formal series in Section 
\ref{sec:homological}, and in particular by considering again 
the formulas developed in Equations \eqref{eq:g_circ_P_alpha} 
and \eqref{eq:DPLambdaTheta_alpha}, we define the map
$b \colon \ell^1_d(X) \to \ell^1_d(X)$ component wise by  
\[
b_m(p) := Dg(p_0)p_m + \sum_{n=1}^s \tilde Q_n (p)_m,
\]
where $\tilde Q_n$ is as defined in Equation \eqref{eq:def_Qtilde}.
We now have the following.

\begin{definition}\label{def:zero_findingmap}
Let an equilibrium $\tilde{x}$ of \eqref{eq:abstractODE} together with eigenvalues 
$\lambda_i$ and eigenvectors $\xi_i$ for $i = 1,\ldots,d$ be given. 
Define the map on $\ell_d^1(X)$ by 
\begin{equation}\label{eq:f}
f_{m}(p) = \begin{cases}
p_{0}-\tilde{x} & m = 0\\
p_{e_i}-\xi_{i} & m = e_i\\
(\lambda\cdot m )p_{m}-b_{m}(p)& |m|\geq 2
\end{cases}
\end{equation} 
where $\lambda\cdot m \bydef \sum_{i = 1}^{d}\lambda_i m_i$.
\end{definition}
The following Lemma is an immediate consequence of the above definitions.

\begin{lemma}
Suppose that $p \in \ell_d^1(X)$ has $f(p) = 0$, where $f$ is given by \eqref{eq:f}.  
Then $P:\mathbb{B}_1\to \ell^{1}_{\nu}$ given by \eqref{eq:powerSeriesAnstaz} solves \eqref{eq:func_eqBA} together with \eqref{eq:lin_BA}.
\end{lemma}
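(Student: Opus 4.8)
The proof is essentially a matter of unwinding Definition \ref{def:zero_findingmap} and invoking uniqueness of Taylor coefficients, so the plan is to: (i) check that $p \in \ell^1_d(X)$ makes the series \eqref{eq:powerSeriesAnstaz} define a bona fide analytic map; (ii) read the linear constraints \eqref{eq:lin_BA} directly off the low-order components of $f(p)=0$; (iii) produce convergent power series for both sides of \eqref{eq:func_eqBA}; and (iv) observe that $f(p)=0$ says precisely that these two power series have the same coefficients.

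First I would note that $p\in\ell^1_d(X)$ means $\|p\|_1 = \sum_{|m|=0}^\infty \|p_m\| < \infty$, so the series \eqref{eq:powerSeriesAnstaz} converges absolutely and uniformly on the closed unit polydisk and defines an analytic map $P\colon \mathbb{B}_1 \to X$ with $\sup_{\theta\in\mathbb{B}_1}\|P(\theta)\| \le \|p\|_1$. Evaluating the series and its first partials at $\theta = 0$ picks out $p_0$ and $p_{e_i}$; the components $m=0$ and $m=e_i$ of the equation $f(p)=0$ (see \eqref{eq:f}) give $p_0 = \tilde x$ and $p_{e_i} = \xi_i$, which are exactly the first-order constraints \eqref{eq:lin_BA}.

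Next I would expand both sides of \eqref{eq:func_eqBA}. Since the vector field is the polynomial $F(x) = Ax + \sum_{n=1}^s c_n * x^{*_n}$ and $X$ is a Banach algebra, $F\circ P$ has a power series whose coefficients are the convolution/combinatorial expressions worked out in Section \ref{sec:homological}; absolute convergence of $\{p_m\}$ together with the submultiplicativity of the $\ast_{TF}$ product (Lemma \ref{lem:convlemma}) justifies the rearrangements there and shows the coefficient sequence of $F\circ P$ again lies in $\ell^1_d(X)$. In particular, by \eqref{eq:g_circ_P_alpha},
\[
(F\circ P)_m = Dg(p_0) p_m + \sum_{n=1}^s \tilde Q_n(p)_m = b_m(p)
\]
for all $m$. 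On the other hand, term-by-term differentiation of a power series is valid on the open polydisk, and the computation \eqref{eq:DPLambdaTheta_alpha} shows that $\theta \mapsto DP(\theta)A_u\theta$ is analytic on $\mathbb{B}_1$ with $m$-th Taylor coefficient $(\lambda\cdot m)\,p_m$.

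Finally I would match coefficients. For $|m|=0$ the coefficient of $F\circ P$ is $F(p_0)=F(\tilde x)=0=(\lambda\cdot 0)p_0$ since $\tilde x$ is an equilibrium. For $|m|=1$ one has $\tilde Q_n(p)_{e_i}=0$ for every $n$ (if $n=1$ then $\tilde Q_1\equiv 0$; if $n\ge 2$ then every decomposition of $e_i$ forces one summand to equal $e_i$, which is excluded by the indicator $\delta$), so the coefficient of $F\circ P$ is $Dg(p_0)p_{e_i} = Dg(\tilde x)\xi_i = \lambda_i\xi_i = (\lambda\cdot e_i)p_{e_i}$ because $\xi_i$ is an eigenvector. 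For $|m|\ge 2$ the component equation $f_m(p)=0$ reads $(\lambda\cdot m)p_m = b_m(p) = (F\circ P)_m$, i.e. the coefficients agree. Thus $F\circ P$ and $\theta\mapsto DP(\theta)A_u\theta$ are analytic $X$-valued functions on $\mathbb{B}_1$ with identical Taylor coefficients, hence coincide on $\mathbb{B}_1$; that is, $P$ solves \eqref{eq:func_eqBA} together with \eqref{eq:lin_BA}. The only point demanding genuine care — and the one I would flag as the ``main obstacle,'' though it is routine within the Banach-algebra setting — is confirming that the formal series manipulations of Section \ref{sec:homological} (rearranging convolution sums, matching powers, differentiating term by term) are legitimate here; once convergence is in hand everything else is bookkeeping.
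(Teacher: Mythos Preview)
Your proof is correct and follows exactly the route the paper intends: the paper states only that the lemma ``is an immediate consequence of the above definitions,'' and your argument is precisely the careful unwinding of those definitions (reading off the linear constraints from $m=0,\,e_i$, and matching Taylor coefficients via the homological formalism of Section~\ref{sec:homological}) that makes this immediacy explicit.
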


\begin{remark}\label{rem:scaling_remark}{\em
The scalings of the eigenvectors are free in the definition of the map.  
In practice we choose the scalings as discussed in Section \ref{sec:appl}.
}
\end{remark}

\subsection{Fixed point operator in the Fourier-Taylor basis} \label{sec:ZeroFindingFT}
We now specify the operators $A$ and $A^{\dag}$ as promised in Section 
\ref{sec:nonlinearAnalysis}. Here the map $f$ corresponds to the invariance equation \eqref{eq:func_eq}, where $g$ is given by the infinite dimensional ODE system for the eigenbasis coefficients as specified in \eqref{eq:gen_ODEsys}. Therefore we  specialize to the 
case of a Fourier-Taylor base so that, combining the notation of Section  
\ref{sec:nands} and \ref{sec:zero_fin}, we let
 of $X = \ell_\nu^1$ so that $\ell_d^1(X) = X^\nu$.

We define the finite dimensional truncation of $f$, which we denote by $f^{MK}$. 
Let us set  for $M\in\mathbb{N}^d$ and $K>0$ the set 
\[
\mathcal{I}_{MK} = \left\{(m,k): m\preceq M\quad k\leq K\right\}
\] 
and the projection $\Pi_{MK}:X^{\nu}\to\mathbb{R}^{(|M|+1)(K+1)}$ by $\Pi_{MK}p = (p_{mk})_{(m,k)\in\mathcal{I}_{MK}}\bydef p^{MK}$. We identify $\mathbb{R}^{(|M|+1)(K+1)}$ as a subspace of $X^{\nu}$ by seeing an element  as a sequence of sequences, where each sequence entry $p_{mk}$ vanishes for either $k>K$ or $m\succ M$. We denote this operation formally by the immersion $\tau: \mathbb{R}^{(|M|+1)(K+1)} \to X^{\nu}$. Here $m\succ M$ means $m_i> M_i$ for at least one $i = 1,\ldots,d$. Then we assume a splitting of the map $f$ in the form 
\begin{equation}\label{eq:splitting_f}
f(p) = \tau(f^{MK}(p^{MK})) + f^{\infty}(p), 
\end{equation}
where $f^{MK}: \mathbb{R}^{(|M|+1)(K+1)}\to \mathbb{R}^{(|M|+1)(K+1)}$ is the map we implement numerically on the computer. In the following we will drop the immersion $\tau$ whenever it is simplifying the notation. Assume we compute an approximate zero $\bar{p}\in X^{\nu}$ of $f$, that is $f^{MK}(\bar{p}^{MK})\approx 0\in\mathbb{R}^{(|M|+1)(K+1)}$. To define the Newton-like fixed point operator we need an approximate inverse of $Df(\bar{p})$.
\begin{definition}
\begin{enumerate}
\item The following operator is an approximate inverse of $Df(\bar{p})$. Let $A^{MK}\approx Df^{MK}(\bar{p})^{-1}$ and set
\begin{equation}\label{eq:A}
(Ap)_{mk} = \begin{cases}
(A^{MK}p^{MK}))_{mk} & (m,k)\in\mathcal{I}_{MK}\\
p_{mk} & |m|\leq 1 \text{ and }k>K\\
\frac{1}{(\tilde{\lambda}\cdot m)-\mu_k} p_{mk} & (m,k)\notin \mathcal{I}_{MK}\text{ and }|m|> 1
\end{cases}
\end{equation}
\item If $A$ is injective, then fixed points of 
\begin{equation}\label{eq:T}
T: X^{\nu}\to X^{\nu},\quad Tp = p- A f(p) 
\end{equation}
correspond to zeros of $f$.
\end{enumerate}
\end{definition}
We also specify the operator $A^{\dag}\approx A^{-1}$:
\begin{equation}\label{eq:Adag}
(A^{\dag}p)_{mk} = \begin{cases}
(Df^{MK}(\bar{p})p^{MK})_{mk} & (m,k)\in\mathcal{I}_{MK}\\
p_{mk} & |m|\leq 1 \text{ and }k>K\\
(\tilde{\lambda}\cdot m - \mu_k)p_{mk} & (m,k)\notin \mathcal{I}_{MK}\text{ and }|m|> 1
\end{cases}.
\end{equation}


\section{Applications}\label{sec:appl}

Consider Fisher's equation with Neumann boundary conditions as specified in \eqref{eq:Fishers}. Because we impose Neumann boundary conditions we expand $u(x,t)$ in a Fourier cosine series and obtain the infinite system of ODEs for the real Fourier coefficients $a = (a_k)_{k\geq 0}$
\begin{equation}\label{eq:Fishers_seq}
a_{k}'(t) = (\alpha-k^2)a_{k}(t) + \displaystyle\sum_{\stackrel{k_1 + k_2 + k_3 = k}{k_i\in\mathbb{Z}}}c_{|k_1|}a_{|k_2|}a_{|k_3|}\bydef g_{k}(a)\end{equation}
where $c = (c_k)_{k\geq 0}$ is the sequence of real Fourier coefficients of $c(x)$.\\

\subsection{Validated computation of the first order data}\label{sec:firstorderdata}
In order to build a high order approximation of the unstable manifold of an equilibrium $\tilde{a}$ of \eqref{eq:Fishers_seq} we need a 
validated representation of $\tilde a$ and also its eigendata. In the following paragraphs we discuss how this 
is achieved. 
All of the computer programs discussed in this Section are available for download at \cite{parmPDEcode}.

\paragraph{Equilibrium solution:}
We look for an equilibrium  solution $\tilde{a}$ of Equation \eqref{eq:Fishers_seq}, that is we demand $g(\tilde{a}) = 0$.
The map is well defined as long as $1 < \bar \nu < \nu$, 
but unbounded for $\bar \nu = \nu$.  
In fact $g$ is Frechet differentiable with differential $Dg(a) \colon \ell^{1}_{\nu} \to \ell^1_{\bar{\nu}}$
given by 
\[
[Dg(a)h]_k =  (\alpha - k^2) h_k - 2 \alpha (c * a * h)_k, \quad \quad \quad k \geq 0,
\]
for $a, h \in \ell^{1}_{\nu}$.
Let us specify the operator $A$ and $A^{\dag}$ in this specific context. We choose $K >0$,
and define the projection $g^K \colon \mathbb{R}^{K+1} \to \mathbb{R}^{K+1}$ by 
\[
g^K(a^K) := (\alpha - k^2) a_k - \alpha (c^K * a^K * a^K)_k^K,
\]
where 
\[
(c^K * a^K * a^K)_k^K \bydef 
\sum_{\stackrel{k_1 + k_2 + k_3 = k}{-K \leq k_1, k_2, k_3 \leq K}} c_{|k_1|} a_{|k_2|} a_{|k_3|},
\]
is the truncated cubic discrete convolution. 

\begin{remark}
{\em
Note that even though the nonlinearity is only quadratic (as a function of $a \in \ell^1_\nu$) from a numerical 
point of view the nonlinearity requires computation of the full cubic discrete convolution.  For this 
we use the fast Fourier transform built into the IntLab library.  The reader interested in the 
details can find the map implemented in the program 
\begin{center}
\verb|fisherMapII_cos_intval.m|.
\end{center} 
Similarly the Jacobian matrix is computed using the FFT and standard shift operations.  Our 
implementation is in the program 
\begin{center}
\verb|fisherMapII_Differential_cos_intval.m|
\end{center}
}
\end{remark}

\bigskip

Now, if $\bar a^K$ is an approximate solution of $g^K = 0$ then we let $A^K$ be a 
numerical approximate inverse of the matrix $Dg^K(\bar a^K)$, i.e. suppose that 
$A^K$ is an invertible matrix with 
\[
\| \text{Id} - A^K Dg^K(\bar a^K) \| \ll 1.
\]
Define the linear operators $A$ and $A^\dagger$ by 
\begin{equation}\label{eq:defA_dag}
(A^\dagger h)_n = \begin{cases}
(Dg^K(\bar a^K) h^K)_k    & \text{if } 0 \leq k \leq K \\
(\alpha - k^2) h_k & \text{if } k \geq K+1
\end{cases}
\end{equation}
and 
\begin{equation} \label{eq:defA}
(A h)_k = \begin{cases}
(A^K h^K)_k    & \text{if } 0 \leq k \leq K \\
\frac{1}{\alpha - k^2} h_k & \text{if } k \geq K+1
\end{cases}.
\end{equation}

Let $\bar a \in \ell^{1}_{\nu}$ denote the inclusion of $\bar a^K$ into $\ell^{1}_{\nu}$. For the sake of completeness we include the following 
Lemma providing the $Y$- and $Z$- bounds fulfilling \eqref{eq:Y} and \eqref{eq:Z}. The proof 
is a computation similar to those in Section $5$ of \cite{AlJPJay}, and is discussed in detail in \cite{ck_analytic}.
The MatLab program
\begin{center}
\verb|fisherEquilibriumAnalyticProof.m|
\end{center}
implements the computations which check that the hypotheses of
Lemma \ref{thm:fisherEquilibRadPolyBounds} are satisfied.

\begin{lemma} \label{thm:fisherEquilibRadPolyBounds}
Suppose that $\sqrt{\alpha} < K+1$ and that  $c = c^K + c^\infty \in \ell^{1}_{\nu}$. Let 
\[
Y_0 := | A^K g^K (\bar a) |_\nu + \alpha | A^K |_{\ell^{1}_{\nu}} | \bar a |_\nu^2 | c^\infty |_\nu 
+ \alpha \frac{| c^\infty |_\nu | \bar a |_\nu^2}{(K+1)^2 - \alpha} + 
2 \sum_{k = K+1}^{3K} \alpha \frac{|(c^K * \bar a^K * \bar a^K)_k|}{k^2 - \alpha} \nu^k,
\]
\[
Z_0 := | \text{Id} - A^K Dg^K(\bar a^K) |_{\ell^{1}_{\nu}}, 
\]
\[
Z_1 := 2 \alpha \sum_{k=0}^K |A_{0k}^K| \beta_k + 4 \alpha \sum_{n=1}^K\left( \sum_{k=0}^K |A_{nk}^K| \beta_k\right) \nu^n
+ \frac{2 \alpha}{(K+1)^2 - \alpha} | c|_\nu | \bar a |_\nu,
\]
where 
\[
\beta_k := \max_{K+1 \leq j \leq 2K - k} \frac{| (c^K * \bar{a}^K)_{j+k} |}{2 \nu^j} 
+ \max_{K+1 \leq j \leq 2K + k} \frac{|(c^K * \bar a^K)_{j-k}|}{2 \nu^j},
\]
and 
\[
Z_2 = 2 \alpha  \max \left(   | A^K |_{B(\ell^{1}_{\nu})}, 1 \right)  \max \left( |c|_\nu , 1 \right).
\]
Then the constants 
\[
Y = Y_0, 
\]
and 
\[
Z(r) = Z_2 r - (1 - Z_0 - Z_1),
\]
satisfy \eqref{eq:Y} and \eqref{eq:Z}.  In particular, if $r > 0$ is a positive constant
with 
\[
Z(r) r + Y_0 < 0,
\]
then there is a unique $\tilde a \in B_r(\bar a) \subset \ell^{1}_{\nu}$ so that 
$g(\tilde a) = 0$.
\end{lemma}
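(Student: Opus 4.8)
The statement is an instance of the radii-polynomial scheme of Definition \ref{def:YZp}: I would apply Lemma \ref{eq:radpol_lem} to the zero-finding map $f=g$ with Newton-like operator $T(a)=a-Ag(a)$ and the operator $A$ of \eqref{eq:defA}. First record that $A$ is injective --- it is block diagonal with the invertible matrix $A^K$ on the coordinates $0\le k\le K$ and the nonzero scalars $(\alpha-k^2)^{-1}$ (nonzero since $\sqrt\alpha<K+1$) on the tail --- so fixed points of $T$ coincide with zeros of $g$. The hypothesis $\sqrt\alpha<K+1$ also gives $k^2-\alpha\ge(K+1)^2-\alpha>0$ for every $k\ge K+1$, which is what makes $A$ bounded and the operator $ADg(a)$ bounded on $\ell^{1}_{\nu}$ even though $Dg(a)$ is not, and what keeps the resolvent-type denominators below positive; the hypothesis $c=c^K+c^\infty\in\ell^{1}_{\nu}$ keeps $|c|_\nu,|c^\infty|_\nu$ finite and lets me use the Banach-algebra bound $|c\ast a\ast b|_\nu\le|c|_\nu|a|_\nu|b|_\nu$. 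It then suffices to check that $Y_0$ bounds $\|T(\bar a)-\bar a\|_\nu$ and that $\sup_{u,v\in\mathbb{B}_1}\|DT(\bar a+ru)(rv)\|_\nu\le(Z_0+Z_1+Z_2r)r$, since then the radii polynomial is $\beta(r)=Y_0+(Z_0+Z_1+Z_2r)r-r=Y_0+Z(r)r$ with $Z(r)=Z_2r-(1-Z_0-Z_1)$, and $\beta(r)<0$ yields the claimed unique zero by Lemma \ref{eq:radpol_lem}. Every quantity produced along the way is a finite sum of, or a norm of, finitely many numbers together with an explicitly bounded tail, so the final inequality is checkable in interval arithmetic.

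\textbf{The $Y$-bound.} Write $T(\bar a)-\bar a=-Ag(\bar a)$ and split by the two blocks of $A$. Because $\bar a$ is the inclusion of $\bar a^K$, one has $\bar a_k=0$ for $k>K$; hence on $0\le k\le K$, $(g(\bar a))_k=g^K(\bar a^K)_k-\alpha\,(c^\infty\ast\bar a\ast\bar a)_k$, so this block of $-Ag(\bar a)$ contributes at most $|A^Kg^K(\bar a)|_\nu+\alpha|A^K|_{\ell^{1}_{\nu}}|c^\infty|_\nu|\bar a|_\nu^2$. On the tail $k\ge K+1$ the linear part $(\alpha-k^2)\bar a_k$ vanishes, so $(g(\bar a))_k=-\alpha(c\ast\bar a\ast\bar a)_k$ and $(Ag(\bar a))_k=-\alpha(\alpha-k^2)^{-1}(c\ast\bar a\ast\bar a)_k$; splitting $c=c^K+c^\infty$, the cubic convolution $c^K\ast\bar a^K\ast\bar a^K$ is supported in $[0,3K]$ and produces the explicit finite sum $2\sum_{k=K+1}^{3K}\alpha|(c^K\ast\bar a^K\ast\bar a^K)_k|\nu^k/(k^2-\alpha)$, while the remaining $c^\infty$-piece is controlled by $\alpha\,((K+1)^2-\alpha)^{-1}|c^\infty|_\nu|\bar a|_\nu^2$ using $(k^2-\alpha)^{-1}\le((K+1)^2-\alpha)^{-1}$ and the Banach-algebra estimate. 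Adding the four contributions gives $\|T(\bar a)-\bar a\|_\nu\le Y_0$.

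\textbf{The $Z$-bound.} Decompose $DT(\bar a+ru)(rv)=r\big(\mathrm{Id}-ADg(\bar a)\big)v-rA\big(Dg(\bar a+ru)-Dg(\bar a)\big)v$. Since $Dg(a)h=(\alpha-k^2)h_k-2\alpha(c\ast a\ast h)_k$ is affine in $a$, the difference operator is $h\mapsto-2\alpha r(c\ast u\ast h)$, so the second term equals $2\alpha r^2A(c\ast u\ast v)$; using $\|A\|_{B(\ell^{1}_{\nu})}\le\max\big(|A^K|_{B(\ell^{1}_{\nu})},((K+1)^2-\alpha)^{-1}\big)$, the Banach-algebra inequality, and $|u|_\nu,|v|_\nu\le1$, this is at most $Z_2r^2$. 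For the first term: on the tail $k\ge K+1$, $(\mathrm{Id}-ADg(\bar a))v$ has $k$-th coordinate $\tfrac{2\alpha}{k^2-\alpha}(c\ast\bar a\ast v)_k$, bounded in $\|\cdot\|_\nu$ by $\tfrac{2\alpha}{(K+1)^2-\alpha}|c|_\nu|\bar a|_\nu$ --- the last term of $Z_1$. On $0\le k\le K$, split $v=v^K+v^\infty$: the $v^K$-part reproduces $(\mathrm{Id}-A^KDg^K(\bar a^K))v^K$, of norm at most $Z_0$; the $v^\infty$-part enters $(c\ast\bar a\ast v)_l$ for $0\le l\le K$ only through the finitely many modes $(c^K\ast\bar a^K)_{j+l}$ with $K+1\le j\le 2K-l$ and $(c^K\ast\bar a^K)_{j-l}$ with $K+1\le j\le 2K+l$ (the support of $c^K\ast\bar a^K$ is $[0,2K]$, and its $c^\infty$-part is already absorbed into the tail estimate). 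Bounding $|v_j|\le(2\nu^j)^{-1}$ for $j\ge K+1$ turns each inner sum into a $\max$ over the relevant $j$, and weighting the rows of $A^K$ by the $1$ and $2\nu^n$ coming from $|\cdot|_\nu$ reproduces exactly the $\beta_k$-sums in $Z_1$. Collecting the three pieces gives $\sup_{u,v\in\mathbb{B}_1}\|DT(\bar a+ru)(rv)\|_\nu\le(Z_0+Z_1+Z_2r)r$, and combining with the $Y$-bound finishes the proof as in the first paragraph.

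\textbf{Main obstacle.} The only genuinely delicate bookkeeping is the $Z_1$ term: one must pin down exactly which finitely many Fourier modes of $c^K\ast\bar a^K$ feed into the first $K+1$ coordinates of $ADg(\bar a)v$ when $v$ is supported in the tail --- this requires careful index arithmetic with the cosine convolution, where the coefficient of $v_j$ in coordinate $l$ is $(c\ast\bar a)_{|l-j|}+(c\ast\bar a)_{l+j}$ --- and then check that every one of the infinitely many remaining contributions (those involving $c^\infty$, or tail-to-tail coupling) is subsumed, without double counting, by the single $((K+1)^2-\alpha)^{-1}$-times-Banach-algebra estimate. Keeping the factors of two from the $|\cdot|_\nu$ norm straight throughout is the other easy place to slip.
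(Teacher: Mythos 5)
Your overall strategy is the intended one: the paper does not actually write out this proof (it defers to Section 5 of \cite{AlJPJay} and to \cite{ck_analytic}), so the standard radii-polynomial verification you sketch --- injectivity of $A$, the $Y$-bound from $-Ag(\bar a)$ split over the two blocks of $A$, and the $Z$-bound from the decomposition $r(\mathrm{Id}-ADg(\bar a))v - rA\bigl(Dg(\bar a+ru)-Dg(\bar a)\bigr)v$ --- is the right skeleton. Your accounting of $Y_0$, $Z_0$, $Z_2$, the tail term of $Z_1$, and the derivation of the $\beta_k$ terms from the support of $c^K\ast\bar a^K$ and the bound $|v_j|\le(2\nu^j)^{-1}$ is correct.

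There is, however, a genuine gap at the one step you wave through: the parenthetical claim that the $c^\infty$-part of $(c\ast\bar a\ast v)_l$ for $0\le l\le K$ ``is already absorbed into the tail estimate.'' It is not. The term $\frac{2\alpha}{(K+1)^2-\alpha}|c|_\nu|\bar a|_\nu$ bounds only the coordinates $k\ge K+1$ of $(\mathrm{Id}-ADg(\bar a))v$, where $A$ acts by the scalars $(\alpha-k^2)^{-1}$. The contribution $-2\alpha(c^\infty\ast\bar a\ast v)_l$ with $0\le l\le K$ is generically nonzero (for instance $c^\infty_{K+1}\bar a_0 v_{K+1-l}$ survives), lands in the finite Galerkin block, and is therefore multiplied by $A^K$ rather than by $(k^2-\alpha)^{-1}$; it needs its own bound of the form $2\alpha|A^K|_{B(\ell^{1}_{\nu})}|c^\infty|_\nu|\bar a|_\nu$, exactly parallel to the term $\alpha|A^K|_{\ell^{1}_{\nu}}|\bar a|_\nu^2|c^\infty|_\nu$ you correctly retained in $Y_0$, and to the $|c^\infty|_\nu|\bar a|_\nu$ terms that do appear in the analogous bound $\hat Z_1^2$ of Lemma \ref{lem:stabilityCheck}. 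No such term appears in the stated $Z_1$, so your argument cannot close as written: either $Z_1$ must be augmented by this term (meaning the lemma as printed is slightly too optimistic whenever $c^\infty\neq 0$, as in the Poisson-kernel example), or one must restrict to $c^\infty=0$. The same oversight hides in your claim that the $v^K$-part ``reproduces'' $(\mathrm{Id}-A^KDg^K(\bar a^K))v^K$, since the residual $-2\alpha(c^\infty\ast\bar a^K\ast v^K)_l$ is left over there as well. Everything else in the proposal checks out.
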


\paragraph{Validated computation of eigenvalue/eigenvector pairs:}
Suppose now that $\tilde{u}$ is any equilibrium solution of Fisher's equation
with Neumann boundary conditions.
Linearizing about $\tilde{u}$ leads to the eigenvalue problem
\[
\frac{d^2 }{d x^2} \xi + \alpha \xi - 2 \alpha c \tilde{u} \xi = \lambda \xi, 
\quad \quad \quad \quad \xi'(0) = \xi'(\pi) = 0.
\]
Letting $\tilde{a}$ denote the sequence of cosine series coefficients for $\tilde{u}$ leads
to the Fourier space formulation
\[
(\alpha - k^2) \xi_k - 2 \alpha (c * \tilde{a} * \xi)_k = \lambda \xi_k, \,\,\,\,\,\,\,\,\,\,\,\, \mbox{for} \,\, k \geq 0, 
\]
for the cosine series coefficients of $\xi$. We note that this
is the precisely the eigenvalue problem
\[
Dg(\tilde{a}) \xi =  \lambda \xi,
\]
in the sequence space, in direct analogy with the case of a finite dimensional vector field.

As per the philosophy of the present work, we solve the eigenvalue problem via a zero finding argument.
Since a scalar multiple of an eigenvector is again an eigenvector 
it is necessary to append some scalar constraint in order to isolate a unique 
solution of the eigenvalue/eigenvector problem.  We choose $s \in \mathbb{R}$ and look for 
a solution $\xi \in \ell^{1}_{\nu}$ having $\xi_0 = s$. (The choice of phase condition is a convenience. 
Other phase conditions such as $| \xi |_{\nu} = 1$ or $\xi(0) = s$ would work as well, and can be
incorporated by making only minor modifications to the mappings defined below).

Define the mappings $h \colon \ell^{1}_{\nu} \to \mathbb{R}$ by 
\[
\tau(\xi) := \xi_0 - s,
\]
and $h \colon \mathbb{R} \times \ell^{1}_{\nu} \to \ell^{1}_{\nu}$ by 
\[
h(\lambda, \xi)_k := 
(\mu - k^2) \xi_k - 2 \mu (c* \tilde{a}  * \xi)_k - \lambda \xi_k,
\,\,\,\,\,\,\,\,\,\,\,\, k \geq 0.     
\]
We then define the mapping $H \colon \mathbb{R} \times \ell^{1}_{\nu} \to 
\mathbb{R} \times \ell^{1}_{\nu}$ by 
\[
H(\lambda, \xi) := 
\left(
\begin{array}{c}
  \tau(\xi)  \\
  h(\lambda, \xi)  \\   
\end{array}
\right)
\]
A zero of $H$ is an eigenvalue/eigenvector pair for the operator $Dg(\tilde{a})$.
In turn $\lambda$ is an eigenvalue for the PDE with eigenfunction given by 
\[
\xi(x) = \xi_0 + 2 \sum_{k=1}^\infty \xi_k \cos(kx).
\]
 
We note that the mapping 
$H$ is nonlinear due to the coupling term $\lambda \xi_k$
(i.e. we consider $\lambda$ and $\xi$ as simultaneous unknowns).

We will now construct a Newton-like operator in order to study the equation $H(\lambda,\xi) = 0$.
First we note that 
\[
D_\lambda \tau(\xi) = 0, \,\,\,\,\,\,\,\,\,\,  D_\xi h(\xi) = e_0,
\]
where $(e_0)_k = 1$ if $k = 0$ and is zero otherwise, that
\[
D_\xi h(\lambda, \xi)v 
 = \left[Dg(\tilde{a})  - \lambda \mbox{Id}\right] v,
\]
and that 
\[
D_\lambda H(\lambda, \xi) = -\xi.
\]
In block form we write
\[
DH(\lambda, \xi) (w, v) = 
\left(
\begin{array}{ccc}
  0 &  e_0  \\
 -\xi    &   Dg(\tilde{a})  - \lambda \mbox{Id}
\end{array}
\right)
\left[
\begin{array}{ccc}
  w   \\
  v   
\end{array}
\right],
\]
where $w \in \mathbb{R}$ and $v \in \ell_\nu^1$.
Consider the projected map
$h^K \colon \mathbb{R}^{K+2} \to \mathbb{R}^{K+1}$ defined by 
\begin{equation}
h^K(\lambda, \xi^K)_k :=
 (\alpha - k^2)\xi^K_k - 2 \alpha(c^K * \bar{a}^{K} * \xi^K )^K_k - \lambda \xi^K  
 \,\,\,\,\,\,\,\,\,\,\,\, 0 \leq  n \leq N  
\end{equation}
and define the total projection map $H^K: \mathbb{R}^{K+2} \to \mathbb{R}^{K+2}$
by 
\[
H^K(\lambda, \xi^K) = 
\left(
\begin{array}{ccc}
  \xi_0 - s   \\
  h^K(\lambda, \xi^K)  
\end{array}
\right).
\]
Suppose now that $(\bar \lambda, \tilde{\xi}^K)$ is an approximate solution of $G^K = 0$.

\begin{remark}{\em
In applications we choose a numerical eigenvalue/eigenvector pair
$(\bar \lambda, \bar{\xi}^K)$ for the matrix $Dg^K(\bar{a}^K)$.
We are free to solve the finite dimensional eigenvalue/eigenvector problem 
using any convenient linear algebra package. For an illustation of the type of validation result we obtain is given in Figure \ref{fig:eigenvalues}.
}

\begin{figure}[h!] 
\begin{center}
\includegraphics[width=8cm]{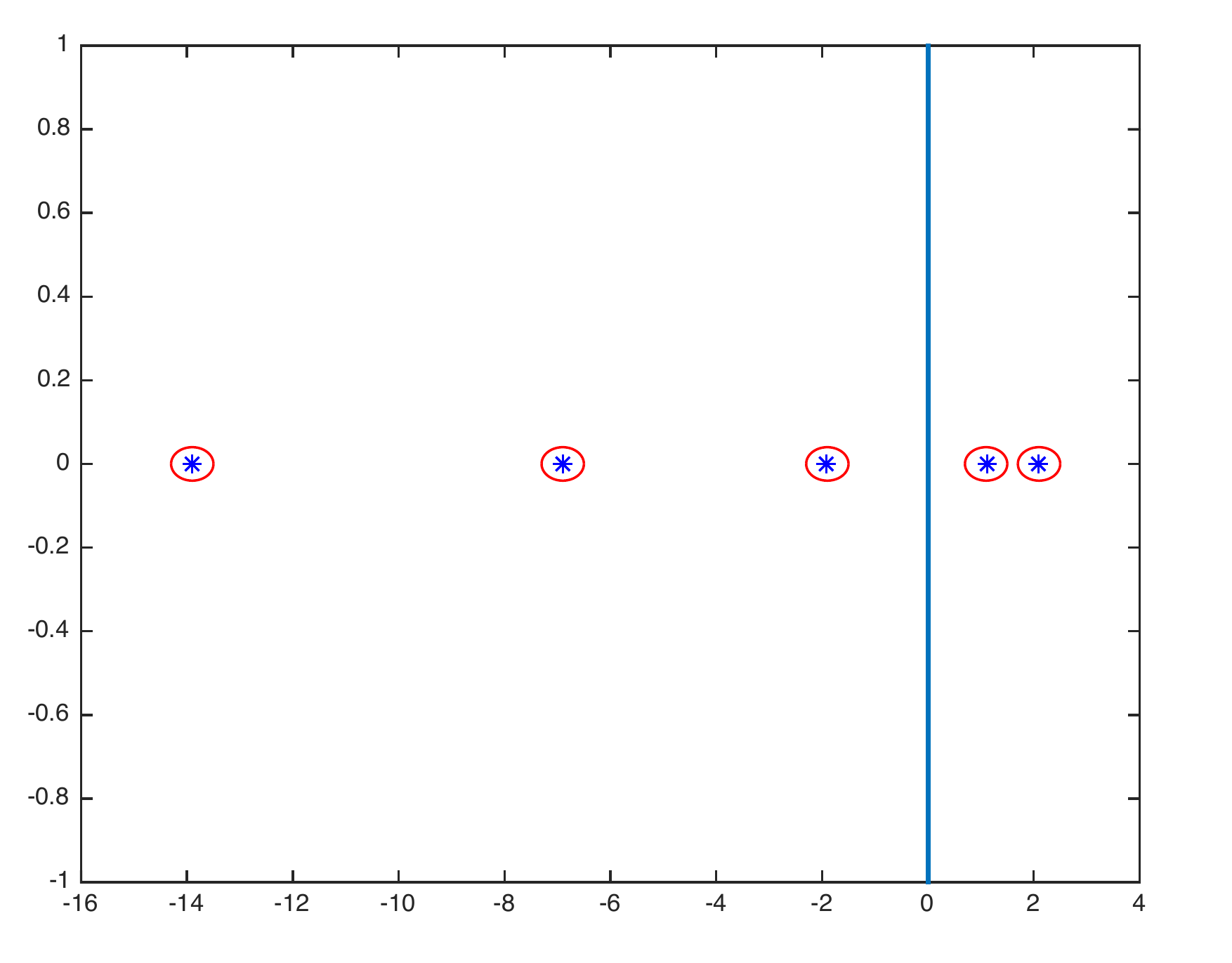}
\end{center}
\caption{
This is an illustration of the type of result our eigenvalue validation method yields. The blue crosses indicate the numerical eigenvalues $\bar{\lambda}$ of the finite dimensional matrix $Dg^{K}(\bar{a}^K)$. The red circles centered at the crosses indicate where true eigenvalues of $Dg(\tilde{a})$, that is the linearization of the \textit{infinite dimensional} map $g$ at the \textit{precise} equilibrium  $\tilde{a}$, can be found. It remains to be checked that the number of positive eigenvalues of $Dg(\tilde{a})$ is the same as the one of $A^{\dag}$ in \eqref{eq:defA_dag}, or $A$ in \eqref{eq:defA} equivalently. This is subject of Lemma \ref{lem:eigCountFisher}.
} \label{fig:eigenvalues}
\end{figure}

\end{remark}

\medskip

Let $B^K$ be a $K+2 \times K+2$ matrix which is obtained as a numerical 
inverse of $DH^K(\bar \lambda, \tilde{\xi}^K)$.
We partition $B^K$ as 
\[
B^K =
\left(
\begin{array}{cc}
   B_{11}^K    &  B_{12}^K   \\
    B_{21}^K   &  B_{22}^K        
\end{array}
\right),
\]
where $B_{11}^K \in \mathbb{R}$ is the first entry of $B^K$
, $B_{12}^K \in \left(\mathbb{R}^{K+1}\right)^*$ is the remainder of the first row of $B^K$,
$B_{21}^K \in \mathbb{R}^{K+1}$ is the remainder of the first column of $B^K$
and $B_{22}^K$ is the remaining $K+1 \times K+1$ 
matrix block.  The linear operators $B, B^\dagger$ are defined respectively by 
\[
B^\dagger :=
\left(
\begin{array}{cc}
   B_{11}^\dagger    &  B_{12}^\dagger   \\
    B_{21}^\dagger   &  B_{22}^\dagger        
\end{array}
\right),
\]
where the sub-operators are $B_{11}^\dagger \colon \mathbb{R} \to \mathbb{R}$
defined by 
\[
B_{11}^\dagger := 0,
\]
$B_{12}^\dagger \colon \ell^{1}_{\nu} \to \mathbb{R}$ defined by 
\[
B_{12}^\dagger (v)_k = v_k
\]
$B_{21}^\dagger \colon \mathbb{R} \to \ell^{1}_{\nu}$ defined by 
\[
B_{21}^\dagger(w) := 
\left\{
\begin{array}{ccc}
  -\tilde{\xi}_k w & & 0 \leq k \leq K \\
  0 & & k \geq K+1 
\end{array}
\right. 
\]
and $B_{22}^\dagger \colon Y^{\nu'} \to \ell^{1}_{\nu}$ defined by 
\[
B_{22}^\dagger(v)_k :=
\left\{
\begin{array}{ccc}
  \left[D h^K(\bar \lambda, \tilde \xi^K) v^K \right]_k & & 0 \leq k \leq K \\
  (\alpha - k^2) v_k & & k \geq K+1 
\end{array}
\right. ,
\]
and
\[
B :=
\left(
\begin{array}{cc}
   B_{11}    &  B_{12}   \\
    B_{21}   &  B_{22}        
\end{array}
\right),
\]
and $B_{11} \colon \mathbb{R} \to \mathbb{R}$
defined by 
\[
B_{11} := B_{11}^K,
\]
$B_{12} \colon \ell^{1}_{\nu} \to \mathbb{R}$ defined by 
\[
B_{12} (v) :=
\sum_{k=0}^K (B_{12}^K)_k v_k,
\]
$B_{21} \colon \mathbb{R} \to \ell^{1}_{\nu}$ defined by 
\[
B_{21} (w) := 
\left\{
\begin{array}{ccc}
  \left(B_{21}^K w\right)_k  & & 0 \leq k \leq K \\
  0 & & k \geq K+1 
\end{array}
\right. 
\]
and $B_{22} \colon \ell^{1}_{\nu} \to \ell^{1}_{\nu}$ defined by 
\[
B_{22} (v)_k :=
\left\{
\begin{array}{ccc}
  \left[ B_{22}^K v^K \right]_k & & 0 \leq k \leq K \\
  (\alpha - k^2)^{-1} v_k & & k \geq K+1 
\end{array}
\right. .
\]

Define the space 
\[
\mathcal{X}_\nu := \mathbb{R} \times \ell_\nu^1.  
\]
We write $x = (\lambda, \xi)$ for an element of $\mathcal{X}_\nu$.
We employ the product space norm on $\mathcal{X}_\nu$ so that 
\[
\| x\| = \| (\lambda, \xi)\| := \max \left( | \lambda|, | \xi |_\nu \right).
\]
Then we write 
\[
H(x) = H(\lambda, \xi),
\]
and for $y = (w, v) \in \mathcal{X}_\nu$ we have for example that 
\[
B \, y := 
\left(
\begin{array}{c}
  B_{11} w + B_{12} v  \\
  B_{21} w + B_{22} v  
\end{array}
\right),
\] 
(and similarly for $Dh(\tilde x) y$ and $B^\dagger y$).
Define the Newton-like operator
$\hat{T} \colon \mathcal{X}_\nu \to \mathcal{X}_\nu$
by 
\begin{equation}
T(x) = x - B H(x).
\end{equation}
The following lemma gives sufficient conditions that $T$ is a
contraction in a neighborhood of the approximate solution. 
The standard proof is a computation similar (for example) to that
carried out explicitly in Section $5$ of \cite{AlJPJay}.

\begin{lemma}\label{lem:stabilityCheck}
Suppose that $K+1 > \sqrt{\alpha}$, and 
for each $0 \leq k \leq K$ define the quantities 
\[
\hat{\alpha}_k := \sup_{K+1 \leq n \leq 2K - k} \frac{ \left| ( c^K * a^K )_{k+n} \right|}{2 \nu^n},
\]
and
\[
\hat{\beta}_k := \sup_{K+1 \leq n \leq 2K + k} \frac{ \left| ( c^K * a^K )_{n-k} \right|}{2 \nu^n}.
\]
Let $b_{ij}$ denote the entries of the $K+1 \times K+1$ matrix $B_{22}^K$.
Then the constants
\[
 \hat{Y}_0^1 :=
\left| B_{11}^K \right| \left|\tilde{\xi}_0^K - s\right| + 
\sum_{k=0}^K \left| (B_{12}^K)_k\right| \left|h^K(\bar \lambda, \tilde{\xi}^K)_k  \right| +
2 \alpha | B_{12}^K |_{(\ell_{\nu})^*} | \tilde \xi |_\nu 
\left(  | c^K |_\nu | a^\infty |_\nu + | c^\infty |_\nu | \tilde{a} |_\nu    \right),
\]
\begin{eqnarray*}
 \hat{Y}_0^2 &:=&
 \left| B_{21}^K \right|_\nu \left| \tilde{\xi}^K_0 - s \right|+
| B_{22}^K h^K(\bar \lambda, \tilde \xi) |_\nu + 
2 \alpha | B_{22}^N |_{B(\ell_\nu)} |\tilde \xi |_\nu
\left(  | c^K |_\nu | a^\infty |_\nu + | c^\infty |_\nu | \tilde{a}|_\nu    \right) \\
&+& 
2 \sum_{k = K+1}^{3K} 2\alpha  \frac{| \left( c^K * \bar{a}^{K} * \tilde{\xi}^K \right)_k |}{k^2 - \alpha} \nu^k +
2 \alpha \frac{| \tilde \xi |_\nu
\left(  | c^K |_\nu | a^\infty |_\nu + | c^\infty |_\nu | \bar{a} |_\nu    \right)}{(K+1)^2 - \alpha},
\end{eqnarray*}
\begin{eqnarray*}
\hat{Z}_1^2 &:=& 
2 \alpha \max_{0 \leq k \leq K} |b_{0k}|(| c^K |_\nu | a^\infty |_\nu + | c^\infty |_\nu | \bar{a} |_\nu) +
2\alpha \sum_{k=0}^K | b_{0k}| (\hat{\alpha}_k + \hat{\beta}_k) \\
&+& 2 \alpha \sum_{k=1}^K \left(
 \max_{0 \leq n \leq K}|b_{kn}| (| c^K |_\nu | a^\infty |_\nu + | c^\infty|_\nu |\bar{a}|_\nu)
+2 \sum_{n=0}^K  |b_{kn}| (\hat{\alpha}_k + \hat{\beta}_k)
\right) \nu^k \\
&+& \frac{2 \alpha}{(K+1)^2 - \alpha} \left( |c|_\nu | \tilde{a} |_\nu  + |\bar \lambda| \right),
\end{eqnarray*}
\[
\hat{Z}_0^1 := \left|\left(\mbox{Id}_{\mathbb{R}^{K+2}} - B^K D H^K(\tilde x^K)\right)_{11} \right|
+ \left|
\left(\mbox{Id}_{\mathbb{R}^{K+2}} - B^K DH(\tilde x^K)\right)_{12} \right|_{(\ell_\nu)^*},
\]
\[
\hat{Z}_0^2 := \left|\left(\mbox{Id}_{\mathbb{R}^{K+2}} - B^K D H^K(\tilde x^K)\right)_{21}\right|_\nu
+ \left|
\left(\mbox{Id}_{\mathbb{R}^{K+2}} - B^K D H^K(\tilde x^K)\right)_{22}
\right|_{B(\ell_\nu)},
\]
\[
\hat{Z}_1^1 := 0, 
\]
\[
\hat{Z}_2^1 := 0, 
\,\,\,\,\,\,\,\,\,\, \mbox{and} \,\,\,\,\,\,\,\,\,\,
\hat{Z}_2^2 := \|B\|,
\]
satisfy \eqref{eq:Y} and \eqref{eq:Z}, i.e. 
the polynomials 
\[
p_1(r) := Z_2^1 r^2 - (1 - Z_1^1 - Z_0^1) + Y_0^1, 
\]
and 
\[
p_2(r) := Z_2^2 r^2 - (1 - Z_1^2 - Z_0^2) + Y_0^2,
\]
are radii-polynomials for the eigenvalue/eigenvector problem. 
In particular, if $r$ is a 
positive constant having $p_1(r), p_2(r) > 0$ then there exists 
a unique pair $(\hat \xi, \hat \lambda)$ so that $\hat \xi \in B_r(\tilde \xi) \subset \ell^{1}_{\nu}$
and $|\hat \lambda - \bar \lambda| \leq r$ having that the pair 
solve the equation $\hat G = 0$, i.e. they are an eigenvalue/eigenvector
pair for Fisher's equation.
\end{lemma}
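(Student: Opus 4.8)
The plan is to verify, working componentwise with respect to the product norm $\|(\lambda,\xi)\| = \max(|\lambda|,|\xi|_\nu)$ on $\mathcal{X}_\nu = \mathbb{R}\times\ell^1_\nu$, that the listed constants satisfy the abstract bounds \eqref{eq:Y} and \eqref{eq:Z} for the Newton-like operator $T(x) = x - BH(x)$; then Lemma \ref{eq:radpol_lem}, applied separately to the scalar component and to the $\ell^1_\nu$ component, produces a unique zero of $H$ in a ball of radius $r$, which by construction is an eigenvalue/eigenvector pair for $Dg(\tilde a)$. The two identities driving every estimate are
\[
T(\bar x) - \bar x = -B\,H(\bar x), \qquad DT(\bar x + ru)\,rv = \bigl[\mathrm{Id} - B\,DH(\bar x)\bigr]rv + B\bigl[DH(\bar x) - DH(\bar x + ru)\bigr]rv .
\]
Throughout one splits every sequence and operator into a finite block supported on indices $0\le k\le K$, on which $B$ acts through the numerical inverse $B^K$ and all quantities are evaluated rigorously with interval arithmetic, and a tail on $k>K$, on which $B$ acts diagonally by $(\alpha-k^2)^{-1}$ and the parabolic smoothing bound $(\alpha-k^2)^{-1}\le((K+1)^2-\alpha)^{-1}$ — valid because $K+1>\sqrt{\alpha}$ — closes the estimates analytically. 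This is the same scheme executed in detail for a closely related map in Section 5 of \cite{AlJPJay}, and for the equilibrium problem in Lemma \ref{thm:fisherEquilibRadPolyBounds}; see also \cite{ck_analytic}.

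For the $Y$-bound one evaluates $H(\bar x) = (\tilde\xi_0^K - s,\ h(\bar\lambda,\tilde\xi))$. Writing $c = c^K + c^\infty$ and $\tilde a = \bar a^K + a^\infty$, for $0\le k\le K$ the component $h(\bar\lambda,\tilde\xi)_k$ equals $h^K(\bar\lambda,\tilde\xi^K)_k$ plus the finitely many convolution terms involving $c^\infty$ or $a^\infty$, and these are controlled by $(|c^K|_\nu|a^\infty|_\nu + |c^\infty|_\nu|\tilde a|_\nu)\,|\tilde\xi|_\nu$ using the Banach-algebra property of $\ell^1_\nu$; for $K+1\le k\le 3K$ the truncated convolution $c^K*\bar a^K*\tilde\xi^K$ still has nonzero entries, producing the explicit finite sum over $k$ that appears in $\hat Y_0^2$; and for $k>3K$ only the $c^\infty$/$a^\infty$ tails contribute, summed against $\nu^k$ and controlled by the geometric factor $((K+1)^2-\alpha)^{-1}$. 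Applying the block decomposition of $B$ and separating the $\mathbb{R}$-valued output of $BH(\bar x)$ from its $\ell^1_\nu$-valued output then yields $\hat Y_0^1$ and $\hat Y_0^2$.

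For the $Z$-bound, the second term of the identity is the simple one: the only nonlinearity in $H$ is the bilinear coupling $\lambda\xi$ in $h$, so $DH$ is affine in $x$ and $DH(\bar x) - DH(\bar x + ru) = -D^2H[ru,\cdot]$ with $D^2H$ a fixed bilinear map whose scalar component vanishes identically. Hence this term is genuinely quadratic in $r$, supported in the $\ell^1_\nu$ component, and bounded by a constant multiple of $\|B\|\,r^2$ since $\|u\|,\|v\|\le1$; this accounts for $\hat Z_2^1=0$ and the quadratic coefficient $\hat Z_2^2=\|B\|$. The first term $[\mathrm{Id} - B\,DH(\bar x)]rv$ is written as $(\mathrm{Id} - BB^\dagger)rv - B\bigl(DH(\bar x) - B^\dagger\bigr)rv$; on the finite block $\mathrm{Id}-BB^\dagger$ collapses to $\mathrm{Id}_{\mathbb{R}^{K+2}} - B^K DH^K(\tilde x^K)$, which is computed directly and supplies $\hat Z_0^1$ and $\hat Z_0^2$ (the row blocks being measured in the $\mathbb{R}$, $(\ell_\nu)^*$ and $B(\ell_\nu)$ norms respectively), while on the tail $B$ and $B^\dagger$ agree so this part vanishes. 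The remaining piece is $B$ applied to the lower-order part $-2\alpha(c*\tilde a*\cdot) - \bar\lambda(\cdot)$ of $DH(\bar x)$: on the tail it is dominated by $((K+1)^2-\alpha)^{-1}\bigl(2\alpha|c|_\nu|\tilde a|_\nu + |\bar\lambda|\bigr)$, and on the finite block one tracks how the truncated convolution $c^K*\bar a^K*v$ spills into modes $n>K$ of the direction $v$ and is folded back by the cosine index reflection into an output mode $\le K$. Using $|v_n|\le|v|_\nu/(2\nu^n)$, the two reflection patterns $k_3=\mp n$ hit $c^K*\bar a^K$ at indices $k+n$ and $n-k$ over the ranges $K+1\le n\le 2K\mp k$ dictated by the support of $c^K*\bar a^K$, and taking suprema there gives exactly $\hat\alpha_k$ and $\hat\beta_k$; summing these against the rows $b_{kn}$ of $B_{22}^K$ with the $\nu^k$ weights, together with the finite-rank corrections involving $|c^\infty|_\nu$ and $|a^\infty|_\nu$, produces $\hat Z_1^2$, the phase-condition row being handled the same way.

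Collecting the four constants for each component assembles the radii polynomials $p_1$ and $p_2$, and the statement follows from Lemma \ref{eq:radpol_lem}. I expect the only genuinely delicate point to be the folding computation entering $\hat Z_1^2$: one must enumerate precisely which entries of $c^K*\bar a^K$ are reflected back into the computational window $\{0,\dots,K\}$, and with which $\nu$-weights, because an off-by-one there would corrupt the bound. Everything else is a lengthy but entirely routine bookkeeping exercise of exactly the kind carried out in \cite{AlJPJay} and \cite{ck_analytic}.
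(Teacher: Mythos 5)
Your proposal is correct and follows exactly the route the paper intends: the paper itself omits the computation, deferring to the analogous argument in Section 5 of \cite{AlJPJay}, and your outline (residual bound via the block decomposition of $B$, the splitting $\mathrm{Id}-BB^\dagger$ plus $B(B^\dagger-DH)$ for the $Z$-bounds, tail control by $((K+1)^2-\alpha)^{-1}$, and the cosine-reflection folding that produces $\hat\alpha_k,\hat\beta_k$) is precisely that standard computation. You also correctly flag the index bookkeeping in $\hat Z_1^2$ as the one delicate spot.
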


\paragraph{Correct eigenvalue count for the equilibrium:}
Now suppose that $\tilde{a} \in \ell_\nu^1$ is as in the previous sections, so that $g(\bar a) = 0$.  
Let $A \colon \ell_\nu^1 \to \ell_\nu^1$ be the linear operator defined by 
Equation \eqref{eq:defA}.  Moreover suppose that the $K+1 \times K+1$ matrix 
$A^K$ is diagonalizable, with eigenvalues
$\lambda_0, \ldots, \lambda_K \in \mathbb{C}$, and eigenvectors $\xi_0, \ldots, \xi_K \in \mathbb{C}^{K+1}$.
Letting $Q^K = [\xi_0, \ldots, \xi_K]$ and $\Sigma^K$ be the diagonal matrix 
of eigenvalues we have that 
\[
A^K = Q^K \Sigma^K Q^{-K},
\]
where $Q^{-K} := (Q^K)^{-1}$.  

Suppose that all of the eigenvalues have non-zero real part,
that exactly $m > 0$ are unstable, and that $\sqrt{\alpha} < K+1$.
Define the operators $Q, Q^{-1}, \Sigma \colon \ell_\nu^1 \to \ell_\nu^1$
by 
\[
(Qh)_k := \begin{cases}
[Q^K h^K]_k & 0 \leq k \leq K \\
h_k  & k \geq K+1
\end{cases},
\]
\[
(Q^{-1} h)_k := \begin{cases}
[Q^{-K} h^K]_k & 0 \leq k \leq K \\
h_k  & k \geq K+1
\end{cases},
\]
and 
\[
(\Sigma h)_k := \begin{cases}
[\Sigma^K h^K]_k & 0 \leq k \leq K \\
\frac{h_k}{\alpha - k^2}  & k \geq K+1
\end{cases}.
\]
Then note that 
\begin{itemize}
\item $\Sigma$ is well defined, 
\item $A$ and $\Sigma$ have the same spectrum,
\item the spectrum of $\Sigma$ and hence of $A$ is 
\[
\mbox{spec}(A) = \{ \lambda_0, \ldots, \lambda_K \} \cup \bigcup_{k = K+1}^\infty \frac{1}{\alpha - k^2} \cup \{0\},
\]
\item $\Sigma$ is a compact, 
\item $A = Q \Sigma Q^{-1}$.
\item The operator $B = Q \Sigma^{-1} Q^{-1}$ is unbounded but densely defined, due to the algebraic growth 
of the eigenvalues $\alpha - k^2$.  Since the eigenvalues approach $-\infty$, we have that $B$ 
generates a compact semi-group.  
\end{itemize}
The following lemma provides sufficient conditions that the matrix $A^K$ gives the correct 
unstable eigenvalue count for the infinite dimensional linearized problem.  
The MatLab program 
\begin{center}
\verb|fisher_validateEigCount.m|
\end{center}
performs the computations which check the hypotheses of Lemma \ref{lem:eigCountFisher}.

\begin{lemma} \label{lem:eigCountFisher}
Let $Y_0, Z_0, Z_1$ and $Z_2$ be the positive constants defined in Lemma
\ref{thm:fisherEquilibRadPolyBounds}, and suppose that $A, Q, Q$ and $\{\lambda_0, \ldots, \lambda_K\}$
are as discussed above.
Define 
\[
\mu_0 := \max_{0 \leq j \leq K} \sqrt{1 + \left(\frac{\mbox{{\em imag}}(\lambda_j)}{\mbox{{\em real}}(\lambda_j)} \right)^2},
\]
and suppose that $r > 0$ has that 
\[
Z_2 r^2 - (1-Z_0 - Z_1)r  + Y_0 < 0.
\]
Define 
\[
\epsilon := Z_2 r + Z_1 + Z_0, 
\]
and assume that 
\[
\| Q^K \| \| Q^{-K}\| \mu_0 \epsilon < 1.
\]
Then $Dg(\tilde a)$ has exactly $m$ unstable eigenvalues.
\end{lemma}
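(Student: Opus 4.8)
The plan is to verify the hypotheses of the abstract perturbation Lemma \ref{lem:eigCount}, applied with the bounded operator $A$ of \eqref{eq:defA} in the role of ``$A$'' and with $M = Dg(\tilde a)$ in the role of ``$M$''. The assumption $Z_2 r^2 - (1-Z_0-Z_1)r + Y_0 < 0$ is precisely the radii--polynomial condition of Lemma \ref{thm:fisherEquilibRadPolyBounds}, so that lemma supplies the true equilibrium $\tilde a$ together with the enclosure $\tilde a \in B_r(\bar a) \subset \ell^{1}_{\nu}$, i.e. $|\tilde a - \bar a|_\nu \leq r$. The structural requirements on $A$ are already in the bullet list preceding the statement: $A = Q\Sigma Q^{-1}$ with $Q,Q^{-1}$ bounded on $\ell^{1}_{\nu}$, $\Sigma$ (hence $A$) compact, $\mathrm{spec}(A) = \{\lambda_0,\dots,\lambda_K\}\cup\{(\alpha-k^2)^{-1}:k\geq K+1\}\cup\{0\}$, and $B := Q\Sigma^{-1}Q^{-1}$ densely defined since the tail eigenvalues $\alpha-k^2$ of $\Sigma^{-1}$ grow like $-k^2$. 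Because $\sqrt{\alpha}<K+1$, the tail eigenvalues $(\alpha-k^2)^{-1}$ are negative, so the unstable eigenvalues of $A$ are exactly the $m$ unstable eigenvalues of $A^K$; moreover these tail eigenvalues are real, contributing the value $1$ to $\sup_{j\geq 0}\sqrt{1+(\mathrm{imag}(\lambda_j)/\mathrm{real}(\lambda_j))^2}$, so that supremum equals the $\mu_0$ of the statement.

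Two genuine points remain. First, that $Dg(\tilde a)$ is a \emph{bounded} perturbation of $B$. The unbounded diagonal parts of $Dg(\tilde a)$ and of $B$ coincide on indices $k\geq K+1$ and cancel, so $H := Dg(\tilde a) - B$ splits as a bounded finite--block operator (there $Dg(\tilde a)$ acts boundedly and $B$ acts as $(A^K)^{-1}$) plus the convolution operator $h\mapsto -2\alpha(c * \tilde a * h)$, which is bounded on $\ell^{1}_{\nu}$ by the Banach algebra property; hence $H\in B(\ell^{1}_{\nu})$ and $M := B+H = Dg(\tilde a)$ has the required form. Second, the perturbation estimate $\|\mathrm{I} - A\,Dg(\tilde a)\|_{B(\ell^{1}_{\nu})} \leq \epsilon$: here I would invoke the estimate underlying Lemma \ref{thm:fisherEquilibRadPolyBounds}, namely that its $Z$--bounds yield $\|\mathrm{I} - A\,Dg(c)\|_{B(\ell^{1}_{\nu})} \leq Z_0 + Z_1 + Z_2\rho$ for every $c\in B_\rho(\bar a)$; evaluating at $c = \tilde a$ with $\rho = r$ gives exactly $\|\mathrm{I} - A\,Dg(\tilde a)\|_{B(\ell^{1}_{\nu})} \leq Z_0 + Z_1 + Z_2 r = \epsilon$.

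Finally I would feed this into Lemma \ref{lem:eigCount}. Its last hypothesis is $\|Q\|_{B(\ell^{1}_{\nu})}\|Q^{-1}\|_{B(\ell^{1}_{\nu})}\mu_0\epsilon < 1$. Since $Q,Q^{-1}$ are block operators acting as $Q^K,Q^{-K}$ on the first $K+1$ coordinates and as the identity on the tail, and the $\ell^{1}_{\nu}$ norm splits across this decomposition, one has $\|Q\|_{B(\ell^{1}_{\nu})} = \max(\|Q^K\|,1)$ and $\|Q^{-1}\|_{B(\ell^{1}_{\nu})} = \max(\|Q^{-K}\|,1)$; after rescaling the columns of $Q^K$ (which leaves the product $\|Q^K\|\|Q^{-K}\|$ unchanged) we may assume both factors are at least $1$, whereupon this reduces to the stated $\|Q^K\|\|Q^{-K}\|\mu_0\epsilon < 1$. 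Lemma \ref{lem:eigCount} then gives that $M = Dg(\tilde a)$ has the same number of unstable eigenvalues as $A$, namely $m$, which is the claim.

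The main obstacle I anticipate is the perturbation estimate of the second paragraph: one must check carefully that the constants $Z_0,Z_1,Z_2$ of Lemma \ref{thm:fisherEquilibRadPolyBounds} genuinely bound the operator norm $\|\mathrm{I} - A\,Dg(c)\|_{B(\ell^{1}_{\nu})}$ \emph{uniformly} over the validated ball, rather than only the $r$--scaled defect appearing in Definition \ref{def:YZp}, and to reconcile the infinite--dimensional operator norms of $Q,Q^{-1}$ with the finite matrix quantities $\|Q^K\|,\|Q^{-K}\|$ used in the hypothesis. Everything else is assembling facts already in place.
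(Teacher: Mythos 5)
Your proposal is correct and follows essentially the same route as the paper: the paper's proof likewise observes that the $Z$-bounds of Lemma \ref{thm:fisherEquilibRadPolyBounds} give $\| \mbox{I} - A\,Dg(x)\| \leq Z_0 + Z_1 + Z_2 r = \epsilon$ for all $x \in B_r(\bar a)$, in particular at $x = \tilde a$, and then applies Lemma \ref{lem:eigCount} with $B = Q\Sigma^{-1}Q^{-1}$ and $H = Dg(\tilde a) - B$. The paper states this in three lines and leaves implicit the points you verify explicitly (boundedness of $H$, and the passage from $\|Q\|,\|Q^{-1}\|$ to $\|Q^K\|,\|Q^{-K}\|$), so your additional checks are refinements of, not departures from, the published argument.
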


\begin{proof}
Note that 
\[
DT(x) = \mbox{I} - ADg(x),
\]
and inspection of Equation \eqref{eq:Z} implies that 
\[
\| \mbox{I} - ADg(x)\| \leq Z_2 r + Z_1 + Z_0 \leq \epsilon,
\]
for all $x \in B_r(\tilde x)$, in particular this inequality holds at $x = \tilde x$.
Then the operators $A$, $B = Q \Sigma^{-1} Q^{-1}$, 
and $M = B + H$ (with $H = Dg(\tilde x) - B$) satisfy the hypothesis of 
Lemma \ref{lem:eigCount} with 
\[
M = Dg(\tilde x),
\]
so that we have that correct eigenvalue count as claimed.
\end{proof}

\paragraph{Example numerical computation of the linear data with a non-constant spatial inhomogeneity:}\label{ex:equi_examp}
Consider Fisher's equation with $\alpha = 2.1$ and the spatial inhomogeneity 
given by a Poission kernel 
\begin{equation}\label{eq:poisson_kernel}
c(x) = 1 + 2 \sum_{k=1}^\infty r^k \cos(kx),
\end{equation}
with $r = 1/5$. As a matter of fact the Fourier coefficients $c_k = r^k$ are in $\ell^{1}_{\nu}$ whenever $r\nu<1$. We also have explicit control over the norm $|c|_{\nu} = \frac{2}{1-rv}-1$. To deal with the inhomogeneity $c(x)$ we split its sequence of Fourier coefficients into the form $c = \bar{c}+c^{\infty}$, where we have precise control over $|c^{\infty}|_\nu$. The Matlab script \verb|a_validateLinearData_paperVersion| carries out all the necessary computations to validate the equilibrium together with its stability data and is available at \cite{parmPDEcode}.

The numerically computed equilibrium solution $\bar{a}^2$
and its approximate unstable eigenfunction $\bar{\xi}$ are illustrated in Figure \ref{fig:fisherLinearData}. We refer to this solution as nontrivial and give it the number $2$ as there are two explicit equilibria given by the constant zero solution and the function $\frac{1}{c(x)}$.

\begin{figure}[h!] 
\begin{center}
\includegraphics[width=8cm]{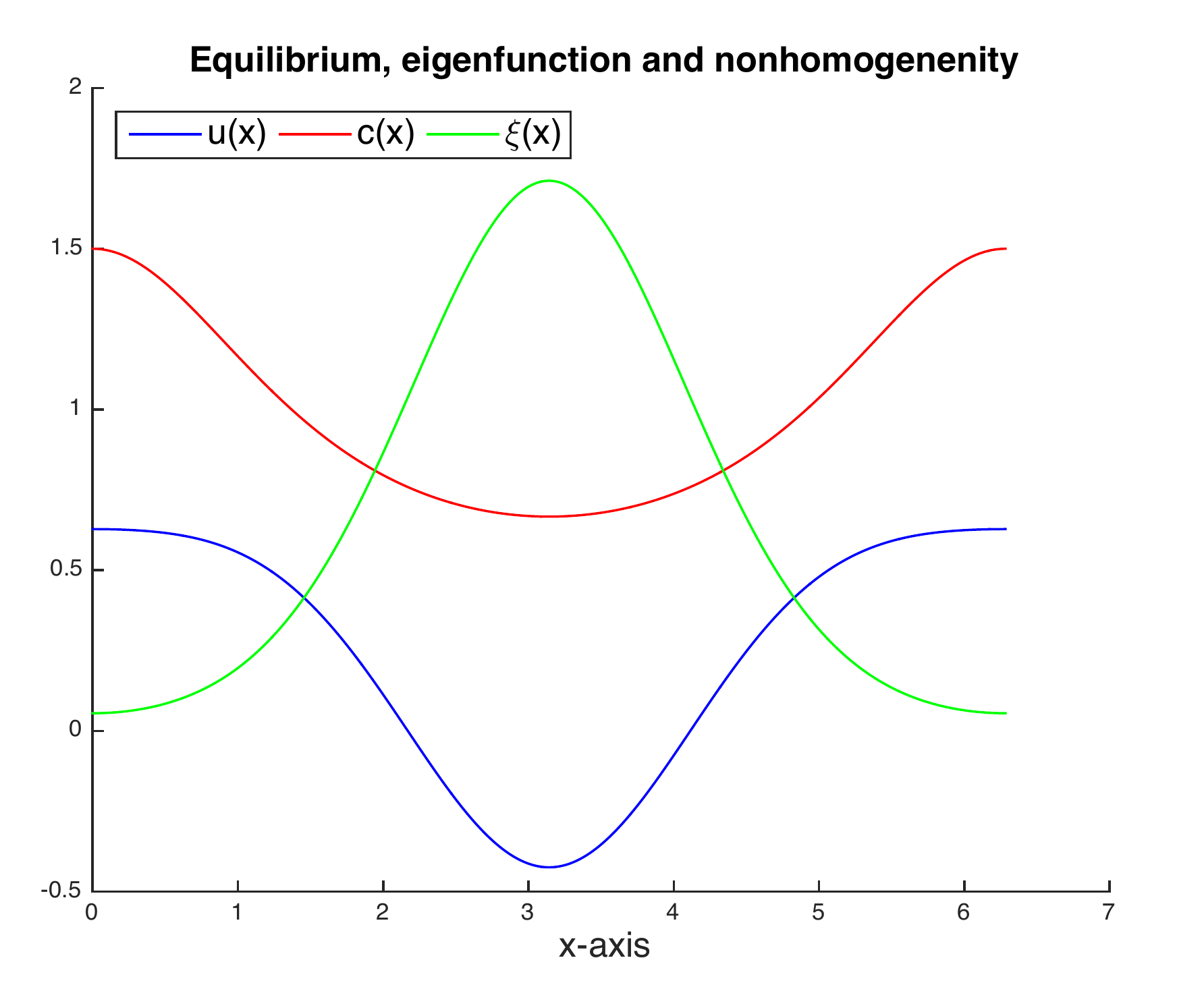}
\end{center}
\caption{
Linear data for the Fisher equation with $\alpha = 2.1$.
The red curve illustrates the spatial inhomogeneity with $c(x)$
a Poisson kernel with parameter $r = 1/5$.  The blue curve illustrates 
the numerically computed non-trivial equilibrium solution $\bar{a}^2$.  
The greed curve illustrates the numerically computed unstable 
eigenfunction $\bar{\xi}$.   The data is validated in $\ell^{1}_{\nu}$ with $\nu = 1.001$ and   
$C^0$ errors less than $5 \times 10^{-13}$.
} \label{fig:fisherLinearData}
\end{figure}

We approximate the system using $K = 20$ cosine modes, i.e. the numerical computations 
are carried out in $\mathbb{R}^{21}$.
We choose $\nu = 1.1$ and use the MatLab programs discussed in the preceding 
paragraphs to validate the results.  We obtain that there exists a true analytic equilibrium 
solution for the problem whose $C^0$ distance from the numerical approximation is 
less than $r_0 =  2.1 \times 10^{-14}$.  Similarly, we obtain that the equilibrium 
has exactly one unstable eigenvalue 
\[
\lambda_u = 2.194489888429804 \pm 3.5 \times 10^{-13},
\]
and obtain validated error bounds on the eigenfunction of the same order.

\subsection{Validated parameterization of the unstable manifold}\label{subsec:cmanifold}

First let us give a concrete formula for $b: X^{\nu,d}\to X^{\nu,d}$ as specified in \eqref{eq:f}:
\begin{equation}\label{eq:Fisher_dm}
b_{mk} = (-k^2+\alpha)p_{mk}-\alpha (c\ast (p^{\ast_{TF}2})_{m})_k , \quad |m|\geq 0, k\geq 0,
\end{equation}
where $d = 1$ or $d = 2$ in the following. We start by considering the one-dimensional unstable manifold at the nontrivial equilibrium considered in the example of the last paragraph of \ref{ex:equi_examp}. In this case we highlight how our analysis works if the fixed point together with the eigenvalues and eigenvectors are only known as numerical values together with bounds on the truncation error obtained by methods described in Section \ref{sec:firstorderdata}. This in particular includes checking the Morse index. Then we describe the computation of a two-dimensional manifold at the origin in order to show that our analysis carries over to higher dimensional manifolds.\\

In the following we choose the parameter $\alpha = 2.1$.  The parameter $\alpha$ is an eigenvalue parameter for the zero solution, in the sense that for $(l-1)^2<\alpha<l^2$ the linearization $Dg(\tilde{a}^{0})$ has exactly l unstable eigenvalues. Hence by fixing $\alpha = 2.1$ we obtain a 2D unstable manifold at the origin.

\paragraph{1D unstable manifold at a nontrivial equilibrium}

From Section \ref{sec:firstorderdata} we are given the exact linear data $\tilde{a}$, $\tilde{\lambda}$ and $\tilde{\xi}$ in the form:

\begin{subequations}\label{eq:linear_data}
\label{eq:a0a1}
\begin{alignat}{1}
\tilde{a}^2 &= \bar{a}^2 + a^{\infty}\quad \text{with}\quad |a^{\infty}|_{\nu}\leq r_{\bar{a}^2} \label{eq:equilibrium}\\
\tilde{\xi} &= \bar{\xi} + \xi^{\infty}\quad \text{with}\quad |\xi^{\infty} |_{\nu}<r_{\tilde{\xi}}\quad \text{for a given} \quad\nu>1 \label{eq:eigenvectors}\\
\tilde{\lambda} &= \bar{\lambda} + \lambda^{\infty},\quad \text{with} \quad|\lambda^{\infty}|<r_{\tilde{\lambda}}\label{eq:eigenvalues},
\end{alignat}
\end{subequations}

where we recall that $Dg(\tilde{a})\tilde{\xi} = \tilde{\lambda}\tilde{\xi}$. In addition by checking the conditions in Lemma \eqref{lem:eigCountFisher} we ensure that the $\tilde{\lambda}$ indeed is the only positive eigenvalue of $Dg(\tilde{a})$. \\

In order to derive the bounds from Definition \ref{def:YZp} we first need to make precise how we split the map $f$ from \ref{def:zero_findingmap} into the form \eqref{eq:splitting_f}.

\begin{definition}\label{def:fsplitting1D}
Assume we are given truncation dimensions $M>0$ and $K >0$. Split $p = \Pi_{MK}p + p^{\infty}$, where $p^{\infty} = \left(Id-\Pi_{MK}\right)p$. We set 
\begin{equation}
f^{MK}(p^{MK})_{mk} = \begin{cases}
\bar{a}_k-(p^{MK})_{0k} & m = 0 \\
\bar{\xi}_k - (p^{MK})_{1k} & m = 1\\
(\bar{\lambda}\cdot m + k^2-\alpha) p^{MK}_{mk} + \alpha (\bar{c}\ast\left(p^{MK}\ast_{TF}p^{MK}\right)_{m}^{MK})_k^{K} & 2\leq m\leq M, \\&0\leq k \leq K
\end{cases}
\end{equation}
and 
\begin{equation}\label{eq:f_infty}
(f^{\infty}(p))_{m} = \begin{cases}
a^{\infty}_k - p^{\infty}_{0k} & m = 0, k\geq 0\\
\xi^{\infty}_k - p^{\infty}_{1k}& m = 1, k\geq 0\\
(\lambda^{\infty}\cdot m ) (\Pi_{MK}p)_{mk} + (\tilde{\lambda}\cdot m+k^{2}-\alpha)p^{\infty}_{mk}+\\
\alpha (Id-\Pi_{MK})\left(\bar{c}\ast(p^{MK}\ast_{TF}p^{MK}\right)_{m})_k+\\
\alpha\left[ c\ast((2p^{MK}\ast_{TF}p^{\infty})_{mk} + (p^{\infty}\ast_{TF}p^{\infty})_{m})_k\right]& m\geq 2 , k\geq 0
\end{cases}.
\end{equation}
Then $f(p) = f^{MK}(p^{MK}) + f^{\infty}(p)$.
\end{definition}

The following Theorems summarize the $Y$-bounds and $Z$-bounds from Definition \ref{def:YZp}. Note that Theorem \ref{th:Y1D_c1} rigorously controls the numerical residual and Theorem \ref{th:Z1D_c1} controls the contraction rate. We will split the derivative in the following way:
\begin{equation}
DT(\bar{p}+ru)rv = (Id-AA^{\dag})rv - A\left[(A^{\dag}-Df(\bar{p}+ru))rv\right].
\end{equation}
Note that we (for convenience) use a refined definition for $A^{\dag}$, where we replace $\mu_k = -k^2$ from \eqref{eq:Adag} by $\mu_k - \alpha$. The motivation for this splitting is that the first term is expected to be small and the second one is convenient to keep under explicit control. To bound the norm we use 
\[
\|DT(\bar{p}+ru)rv\|_{\nu}\leq \|(Id-AA^{\dag})rv\|_{\nu} + \|A\left[(A^{\dag}-Df(\bar{p}+ru))rv\right]\|_{\nu}.
\]
To structure later estimates let us define $\Delta\in X^{\nu,1}$
\begin{equation}
\Delta_{mk}(u,v) = \left[(A^{\dag}-Df(\bar{p}+ru))rv\right]_{mk},
\end{equation}
which will be of the form
\begin{equation}\label{eq:Delta}
\Delta = r\Delta^{(1)} + r^2 \Delta^{(2)}.
\end{equation}
 
\begin{theorem}\label{th:Y1D_c1} Y-bounds - 1D\\
Assume truncation dimensions $M>0$ and $K>0$ and an approximate zero $\bar{p}$, with $\Pi_{MK}\bar{p} = \bar{p}$ to be given. Define  
\begin{equation}\label{eq:Y_MK}
Y_m^{MK} = 
|(Df^{MK}(\bar{p})\bar{p})_{m0}| + 2\displaystyle\sum_{k = 1}^{K}|(Df^{MK}(\bar{p})\bar{p})_{mk}|\nu^{k}, \quad m = 0,\ldots, M, 
\end{equation}
and 
\begin{equation}\label{eq:Y_infty}
Y_{m}^{\infty} = \begin{cases}
r_{\bar{a}} & m = 0\\
r_{\bar{\xi}} & m = 1\\
|(|Df^{MK}(\bar{p})|\delta^{\infty})_{m0}| + 2\displaystyle\sum_{k = 1}^{K}|(|Df^{MK}(\bar{p})|\delta^{\infty})_{mk}|\nu^{k} +\\
\displaystyle2\sum_{k = K+1}^{3K}\frac{\alpha|(\bar{c}\ast(\bar{p}\ast_{TF}\bar{p})_m)_{k}|}{\tilde{\lambda}\cdot m+k^{2}-\alpha}\nu^{k} & m = 2,\ldots M\\
\frac{\alpha |(\bar{c}\ast\bar{p}\ast_{TF}\bar{p})_{m0}|}{\tilde{\lambda}\cdot m} + 2 \displaystyle\sum_{k = 1}^{3K}\frac{\alpha|(\bar{c}\ast(\bar{p}\ast_{TF}\bar{p})_m)_{k}|}{\tilde{\lambda}\cdot m+k^{2}-\alpha}\nu^k & M+1 \leq m\leq 2M,
\end{cases}
\end{equation}
where $\delta^{\infty}$ with $\Pi_{MK}\delta^{\infty} = \delta^{\infty}$ is given by 
\begin{equation}\label{eq:delta_infty}
\delta^{\infty}_{mk} = \begin{cases}
|\bar{p}_{mk}|\lambda^{\infty}  & 2\leq m\leq M, 0\leq k\leq K.
\end{cases}
\end{equation}
Then 
\begin{equation}\label{eq:YboundFisher1D} 
Y = \displaystyle\sum_{m = 0}^{M}Y^{MK}_m +\displaystyle\sum_{m = 0}^{M}Y^{\infty}_m + \displaystyle\sum_{m = M+1}^{2M}Y^{\infty}_m
\end{equation}  fulfills \eqref{eq:Y}.
\end{theorem}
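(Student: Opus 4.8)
The plan is to unpack the $Y$-bound as a bound on the fixed-point residual. Since $T(p)=p-Af(p)$, proving \eqref{eq:Y} amounts to bounding $\|T(\bar p)-\bar p\|_\nu=\|Af(\bar p)\|_\nu$. The first step is to evaluate $f(\bar p)$ via the splitting $f(\bar p)=f^{MK}(\bar p^{MK})+f^{\infty}(\bar p)$ of Definition \ref{def:fsplitting1D} at a \emph{finitely supported} $\bar p$, i.e. with $\Pi_{MK}\bar p=\bar p$, so that $p^{\infty}=0$. This collapses most of \eqref{eq:f_infty}: every bilinear term containing $p^{\infty}$ drops out --- in particular all contributions of $c^{\infty}$ vanish --- leaving in $f^{\infty}(\bar p)$ only the data defects $a^{\infty}$ at $m=0$ and $\xi^{\infty}$ at $m=1$, the eigenvalue-defect term $(\lambda^{\infty}\cdot m)\,\bar p_{mk}$ supported on $\mathcal{I}_{MK}$ (which $\delta^{\infty}$ encodes), and the out-of-range block $\alpha(\mathrm{Id}-\Pi_{MK})(\bar c\ast(\bar p\ast_{TF}\bar p)_m)_k$. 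The decisive structural observation is that because $\bar p$ has Taylor degree $\le M$, Fourier degree $\le K$, and $\bar c$ has Fourier degree $\le K$, the quadratic term $\bar c\ast(\bar p\ast_{TF}\bar p)$ is a \emph{finite} sequence supported in Taylor orders $0\le m\le 2M$ and Fourier modes $0\le k\le 3K$; this is what makes $Y$ a finite closed-form expression and fixes the summation ranges in \eqref{eq:Y_MK}--\eqref{eq:YboundFisher1D}.

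Next I would compute $Af(\bar p)$ one Taylor order $m$ at a time, matching each component of $f(\bar p)$ to the correct branch of the piecewise operator $A$ from \eqref{eq:A} (with the refined divisors $\tilde\lambda\cdot m+k^2-\alpha$). For $0\le m\le M$ and $0\le k\le K$ the operator $A$ acts through the dense block $A^{MK}$, which mixes all low-order modes; here the input is the numerical defect $f^{MK}(\bar p^{MK})$ together with the $\mathcal{I}_{MK}$-part of $f^{\infty}(\bar p)$, and the weighted $\ell^{1}_{\nu}$ norms of the resulting components are controlled through the nonnegative majorant $|Df^{MK}(\bar p)|$ applied to $\bar p$ and to $\delta^{\infty}$ (for $m=0,1$ the only surviving piece is the equilibrium/eigenvector defect, bounded by $r_{\bar a}$ and $r_{\bar\xi}$) --- these are the $Y^{MK}_m$ and the $|Df^{MK}(\bar p)|\delta^{\infty}$ contributions in \eqref{eq:Y_MK}, \eqref{eq:Y_infty}. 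For $0\le m\le M$ with $k>K$, and for $M+1\le m\le 2M$ with any $k$, the matrix block no longer sees the coefficient and $A$ reduces to the explicit diagonal factor $(\tilde\lambda\cdot m+k^2-\alpha)^{-1}$ (and to the identity on $|m|\le 1,\ k>K$); applying this to the explicit high Fourier coefficients of $\alpha\,\bar c\ast(\bar p\ast_{TF}\bar p)_m$ yields exactly the tail sums $2\sum_{k=K+1}^{3K}$ for $2\le m\le M$ and the $k=0$ term plus $2\sum_{k=1}^{3K}$ for $M+1\le m\le 2M$ appearing in \eqref{eq:Y_infty}. Summing these weighted $\ell^{1}_{\nu}$ norms over $0\le m\le 2M$ yields \eqref{eq:YboundFisher1D}, and since at each step the chosen majorant dominates the corresponding component of $Af(\bar p)$, the resulting $Y$ satisfies \eqref{eq:Y}.

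Two ingredients make the tail estimates go through. Positivity and growth of the divisors $\tilde\lambda\cdot m+k^2-\alpha$ --- which keep the diagonal tail of $A$ a strict contraction and the tail sums convergent --- follow from the hypothesis $\sqrt{\alpha}<K+1$ together with $\tilde\lambda,m\ge 0$; for $m\ge M+1$ and $k=0$ one instead uses $\tilde\lambda\cdot m>0$ directly, which is why that term is singled out with denominator $\tilde\lambda\cdot m$ in \eqref{eq:Y_infty}. The convolution sequence $\bar c\ast(\bar p\ast_{TF}\bar p)$, its entries, and the matrices $Df^{MK}(\bar p)$, $A^{MK}$ are all evaluated in interval arithmetic (using the FFT-based routines already built for the equilibrium problem), so that \eqref{eq:YboundFisher1D} is a computer-checkable finite sum of nonnegative numbers; where convenient, norm estimates for the convolution pieces may be replaced by the Banach algebra bound of Lemma \ref{lem:convlemma}.

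I do not expect any single estimate to be hard; the main obstacle is the bookkeeping. One must verify that every component of $f(\bar p)$ --- across the split $f^{MK}/f^{\infty}$, across Taylor orders $0\le m\le 2M$, and across Fourier modes above and below $K$ --- is bounded exactly once and is paired with the correct branch of $A$, while respecting the facts that $A^{MK}$ is a full matrix (so its action is controlled only through a majorant, not componentwise) and that the cross-convolution $\bar p\ast_{TF}\bar p$ populates Taylor orders all the way to $2M$ even though $\bar p$ itself stops at order $M$. Getting this combinatorics exactly right --- and packaging it so the implemented program can check the inequality --- is where essentially all the work lies.
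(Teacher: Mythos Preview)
Your proposal is correct and follows essentially the same route as the paper's proof: reduce $f^{\infty}(\bar p)$ using $p^{\infty}=0$, observe that $\bar c\ast(\bar p\ast_{TF}\bar p)$ has finite support in $0\le m\le 2M$, $0\le k\le 3K$, and then bound $|(Af(\bar p))_m|_\nu$ by matching each block of $f(\bar p)$ to the appropriate branch of $A$. The paper's own argument is considerably terser but makes exactly these moves; your write-up simply spells out the bookkeeping that the paper leaves implicit.
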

\begin{proof}
To derive \eqref{eq:Y_MK} and \eqref{eq:Y_infty} we notice that $p^{\infty} = 0$ for $p = \bar{p}$. Hence formula \eqref{eq:f_infty} reduces to 
\begin{equation}\label{eq:f_infty}
(f^{\infty}(\bar{p}))_{m} = \begin{cases}
a^{\infty}_k & m = 0, k\geq 0\\
\xi^{\infty}_k& m = 1, k\geq 0\\
(\lambda^{\infty}\cdot m ) (\Pi_{MK}\bar{p})_{mk} + \\
\alpha (Id-\Pi_{MK})\left(\bar{c}\ast(\bar{p}^{MK}\ast_{TF}\bar{p}^{MK}\right)_{m})_k
& m\geq 2 , k\geq 0
\end{cases}.
\end{equation}
We see that $\delta^{\infty}_{mk}$ in \eqref{eq:delta_infty} is a component-wise bound for the terms in $|(f^{\infty}(\bar{p}))_{mk}|$ with $m\geq 2$ known to use only via error bounds. Next recall that we demand $Y$  to fulfill $\|Af(\bar{p})\|_{\nu}\leq Y$. Note that even though $\bar{p}$ only has finitely many non-zero components, $f(\bar{p})$ does not, due to the linear data being infinite dimensional. Now $Y^{MK}_m + Y_{m}^{\infty}$ bounds $|(Af(\bar{p}))_m|_\nu$ by definition of $A$ in \eqref{eq:A} for $m = 0, \ldots, M$ and $Y_{m}^{\infty}$ for $m = M+1,\ldots, 2M$. Then \eqref{eq:YboundFisher1D} follows from the definition of $\|\cdot\|_{\nu}$.
\end{proof}
As a preparation for the $Z$-bounds we need the following Lemma, see also \cite{kotParm}. We state the more general version with $l$ being a multi-index as this will be needed later on for higher dimensional manifolds. 

\begin{lemma}\label{lem:boundybound1D}
Let $v\in \mathbb{B}_{1}(0)\subset \ell^1_\nu$, truncation dimensions $M\in\mathbb{N}^d$ and $K>0$ and $\bar{p} = \Pi_{MK}\bar{p}\in X^{\nu}$ be given. Then for every $l\preceq M$ and for each $0\leq k\leq K$ the following  estimate is valid:
\begin{equation}
\left|(\bar{c}\ast\bar{p}_{l}\ast v^{\infty})_{k} \right|\leq  h_{lk}^1(\bar{p}) + h_{lk}^2(\bar{p}) ,
\end{equation}
with 
\[
 \max_{j = K+1,\ldots,2K-k}\frac{|(\bar{c}\ast\bar{p}_{l})_{j+k}|}{2\nu^{j}}\bydef h_{lk}^1(\bar{p})\quad\text{and}\quad \max_{j = K+1,\ldots,2K+k}\frac{|(\bar{c}\ast\bar{p}_{l})_{j+k}|}{2\nu^{j}}\bydef h_{lk}^2(\bar{p}).
\]
We set the convention $h(l,0) = 0$ and 
\begin{equation}
v^{\infty}_{k} = \begin{cases}
0 & 0\leq k\leq K\\
v_{k} & k \geq K+1
\end{cases}.
\end{equation} 
\end{lemma}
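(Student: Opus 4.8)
The plan is to compute the Fourier coefficient $(\bar{c}\ast\bar{p}_l\ast v^{\infty})_k$ in closed form directly from the definition of the bi-infinite discrete convolution, discard the terms annihilated by the projection $v^{\infty}$, and bound the handful of surviving terms against the maxima $h^1_{lk}(\bar p)$, $h^2_{lk}(\bar p)$ using only the normalization of the $\ell^1_\nu$ norm. Concretely, I would first abbreviate $q:=\bar{c}\ast\bar{p}_l\in\ell^1_\nu$ and record that, since $\bar{c}$ is supported on the Fourier modes $0,\dots,K$ and $\bar{p}=\Pi_{MK}\bar{p}$, the sequence $q$ is supported on $0,\dots,2K$, i.e. $q_m=0$ for $m>2K$. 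By associativity of $\ast$ and the definition of the convolution,
\[
(\bar{c}\ast\bar{p}_l\ast v^{\infty})_k=(q\ast v^{\infty})_k=\sum_{\stackrel{k_1+k_2=k}{k_i\in\mathbb{Z}}}q_{|k_1|}\,(v^{\infty})_{|k_2|},
\]
and since $(v^{\infty})_{|k_2|}$ vanishes unless $|k_2|\ge K+1$ while $0\le k\le K$, a term contributes in exactly one of two ways: with $k_2=j$, $j\ge K+1$, forcing $k_1=k-j\le-1$ and hence $|k_1|=j-k$; or with $k_2=-j$, $j\ge K+1$, forcing $|k_1|=j+k$. In both cases $(v^{\infty})_{|k_2|}=v_j$.

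Using $q_m=0$ for $m>2K$ to truncate the range of $j$, the expression collapses to the closed form
\[
(\bar{c}\ast\bar{p}_l\ast v^{\infty})_k=\sum_{j=K+1}^{2K+k}q_{j-k}\,v_j\;+\;\sum_{j=K+1}^{2K-k}q_{j+k}\,v_j,
\]
where a sum is interpreted as empty (hence zero) when its upper limit drops below $K+1$; this degenerate case is the role of the stated conventions. It then remains to estimate the two finite sums. I would factor $q_{j\mp k}\,v_j=(q_{j\mp k}/\nu^j)(v_j\nu^j)$ and observe that, by the very definitions of $h^2_{lk}(\bar p)$ and $h^1_{lk}(\bar p)$ as the maxima of $|q_{j-k}|/(2\nu^j)$ over $K+1\le j\le 2K+k$ and of $|q_{j+k}|/(2\nu^j)$ over $K+1\le j\le 2K-k$ respectively, one has $|q_{j-k}|/\nu^j\le 2h^2_{lk}(\bar p)$ and $|q_{j+k}|/\nu^j\le 2h^1_{lk}(\bar p)$ on those ranges. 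Pulling the constants out of the sums and invoking $\sum_{j\ge 1}|v_j|\nu^j\le\tfrac12$, which is immediate from \eqref{eq:fouriercoeffspace} and $v\in\mathbb{B}_1(0)$, yields
\[
\bigl|(\bar{c}\ast\bar{p}_l\ast v^{\infty})_k\bigr|\le 2\bigl(h^1_{lk}(\bar p)+h^2_{lk}(\bar p)\bigr)\sum_{j\ge K+1}|v_j|\nu^j\le h^1_{lk}(\bar p)+h^2_{lk}(\bar p),
\]
as claimed.

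The computation is elementary, so I do not expect a genuine obstacle. The one point that needs care is the bookkeeping of the bi-infinite convolution indices and, in particular, keeping track of the factor $2$ coming from the term $2\sum_{k\ge1}(\cdot)\nu^k$ in the definition of $|\cdot|_\nu$ — this is exactly the factor that has been pre-absorbed into the $\tfrac{1}{2\nu^j}$ built into $h^1_{lk}$ and $h^2_{lk}$, so that the splitting into the two boundary sums matches the two maxima on the nose. Getting the index ranges of those two maxima to agree with the ranges emerging from the convolution is the only thing to verify, and it is mild.
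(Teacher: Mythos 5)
Your argument is correct and complete; note that the paper itself offers no proof of this lemma beyond the citation ``See \cite{kotParm}'', and your elementary index bookkeeping for the symmetric (cosine) convolution, followed by the observation that $\sum_{j\ge 1}|v_j|\nu^j\le \tfrac12$ for $v\in\mathbb{B}_1(0)$, is exactly the standard argument behind such tail estimates (compare the analogous quantities $\beta_k$, $\hat\alpha_k$, $\hat\beta_k$ in Lemmas \ref{thm:fisherEquilibRadPolyBounds} and \ref{lem:stabilityCheck}). One point worth making explicit: as printed, both $h^1_{lk}$ and $h^2_{lk}$ in the lemma carry the numerator $|(\bar c\ast\bar p_l)_{j+k}|$, which is inconsistent with the range $j=K+1,\dots,2K+k$ of $h^2_{lk}$ (there $(\bar c\ast\bar p_l)_{j+k}$ would vanish identically since $\bar c\ast\bar p_l$ is supported on $0,\dots,2K$); you have correctly read $h^2_{lk}$ as the maximum of $|(\bar c\ast\bar p_l)_{j-k}|/(2\nu^j)$, which is the only interpretation under which the stated ranges, the empty-range convention, and the final inequality all fit together.
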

\begin{proof}
See \cite{kotParm}.
\end{proof}
\begin{theorem}\label{th:Z1D_c1} Z-bounds - 1D\\
Assume truncation dimensions $M>0$ and $K>0$ and an approximate zero $\bar{p}$, with $\Pi_{MK}\bar{p} = \bar{p}$ to be given. Define 
\begin{subequations}\label{eq:linear_data}
\label{eq:a0a1}
\begin{alignat}{1}
|\Delta^{(1)}|_{mk} &= \begin{cases}
0 & m = 0,1, k\geq 0\\
\frac{r_{\lambda}m}{\nu^{k}} + \displaystyle\sum_{l = 0}^{m}h_{lk}(\bar{p}) +m\|p\|_\nu|c^{\infty}|_{\nu}& 2\leq m\leq M, 0\leq k\leq K\\
0 & (m\geq M+1,k\geq 0) \vee (0\leq m\leq M, k\geq K+1)
\end{cases}\\
|\Delta^{(2)}|_{mk}&=\begin{cases}
0 & m = 0,1, k\geq 0\\
\frac{2\alpha|c|_{\nu}}{\nu^{k}} & 2\leq m\leq M, 0\leq k\leq K\\
0 & (m\geq M+1,k\geq 0) \vee (0\leq m\leq M, k\geq K+1)
\end{cases}\end{alignat}
\end{subequations},
\begin{equation}
\Sigma^{(j)}_{MK} = \displaystyle\sum_{m = 0}^{M}\left(|(|A||\Delta^{(j)}|)_0| + \displaystyle\sum_{k = 1}^{K}|(|A||\Delta^{(j)}|)_k|
\nu^k\right).
\end{equation}
and $\epsilon$ such that $\sup_{v\in B_{1}(0)}\|(Id-AA^{\dag})rv\|_{\nu}\leq \epsilon r$. Then 
\begin{equation}\label{eq:Z_boundFisher1D}
\begin{aligned}
Z(r) = &r\left(\epsilon + \Sigma_{MK}^{(1)}+ 2\alpha|c|_\nu\|\bar{p}\|_{\nu}\left(\frac{1}{K^{2}-\alpha}+\frac{1}{|(\bar{\lambda}+r_{\lambda})M-\alpha|}\right)\right)\\
&+r^{2}\left(\Sigma_{MK}^{(2)}+ 2\alpha|c|_\nu\left(\frac{1}{K^{2}-\alpha}+\frac{1}{|(\bar{\lambda}+r_{\lambda})M-\alpha|}\right)\right)
\end{aligned}
\end{equation}
fulfills \eqref{eq:Z}.
\end{theorem}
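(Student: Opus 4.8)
The plan is to bound $\sup_{u,v\in\mathbb{B}_1}\|DT(\bar p+ru)rv\|_\nu$ using the splitting recorded just above the statement,
\[
DT(\bar p+ru)rv=(\mathrm{Id}-AA^\dagger)rv-A\bigl[(A^\dagger-Df(\bar p+ru))rv\bigr]=(\mathrm{Id}-AA^\dagger)rv-A\Delta(u,v),
\]
together with the triangle inequality $\|DT(\bar p+ru)rv\|_\nu\le\|(\mathrm{Id}-AA^\dagger)rv\|_\nu+\|A\Delta(u,v)\|_\nu$. The first summand is $\le\epsilon r$ by the defining property of $\epsilon$; here one notes that $A$ and $A^\dagger$ are both block diagonal with respect to the decomposition of $X^{\nu,1}$ into its $\mathcal I_{MK}$-part and its complement, and are exact inverses of one another on the complement, so $\mathrm{Id}-AA^\dagger=\mathrm{Id}-A^{MK}Df^{MK}(\bar p)$ is supported on the finite block and $\epsilon$ is a genuine matrix quantity, checkable by interval arithmetic. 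All remaining work is to produce the componentwise bound $|\Delta(u,v)|\preceq r|\Delta^{(1)}|+r^2|\Delta^{(2)}|$ on $\mathcal I_{MK}$ as stated, together with separate estimates for the convolution tails living off $\mathcal I_{MK}$, and then to push everything through $A$.

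First I would differentiate $f$ componentwise. For $|m|\ge2$, Equation \eqref{eq:Fisher_dm} and the product rule for $\ast_{TF}$ from Section \ref{sec:nands} give
\[
[Df(\bar p+ru)w]_{mk}=(\tilde\lambda\cdot m+k^2-\alpha)\,w_{mk}+2\alpha\bigl(c\ast((\bar p+ru)\ast_{TF}w)_m\bigr)_k,
\]
while on the rows $m=0,1$ the map $f$ is affine and $A^\dagger$ reproduces its derivative exactly, so $\Delta_{0k}=\Delta_{1k}=0$. For $(m,k)\in\mathcal I_{MK}$ with $m\ge2$ I would subtract from $Df(\bar p+ru)rv$ the band-limited contribution that $A^\dagger$'s finite block $Df^{MK}(\bar p)$ already carries; the residual decomposes into four pieces: the diagonal discrepancy $(\bar\lambda-\tilde\lambda)\cdot m\,(rv)_{mk}$, bounded by $r\,r_\lambda m\,\nu^{-k}$ using $|\tilde\lambda-\bar\lambda|\le r_\lambda$ and $v\in\mathbb{B}_1$; the Fourier tail $2\alpha\bigl(\bar c\ast(\bar p\ast_{TF}v^\infty)_m\bigr)_k$, which by Lemma \ref{lem:boundybound1D} — crucially exploiting that $\bar c$ and $\bar p$ are band-limited, so only the modes $K<j\le2K$ of $v$ feed a given $k\le K$ — is controlled by $r\sum_{l\preceq m}h_{lk}(\bar p)$; the inhomogeneity tail $2\alpha\bigl(c^\infty\ast(\bar p\ast_{TF}v)_m\bigr)_k$, bounded by $r\,m\|\bar p\|_\nu|c^\infty|_\nu\,\nu^{-k}$ via the Banach-algebra estimates of Section \ref{sec:nands} and Lemma \ref{lem:convlemma}; and the genuinely quadratic-in-$r$ term $2\alpha r^2\bigl(c\ast(u\ast_{TF}v)_m\bigr)_k$, bounded by $r^2\,2\alpha|c|_\nu\,\nu^{-k}$ since $\|u\|_\nu,\|v\|_\nu\le1$. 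After elementary tracking of constants the first three pieces assemble into $|\Delta^{(1)}|_{mk}$ and the fourth into $|\Delta^{(2)}|_{mk}$. For $(m,k)\notin\mathcal I_{MK}$ with $m\ge2$ the diagonals of $A^\dagger$ and of $Df(\bar p+ru)$ coincide (both equal the exact $\tilde\lambda\cdot m+k^2-\alpha$), so $\Delta$ there is only the convolution term $2\alpha\bigl(c\ast((\bar p+ru)\ast_{TF}v)_m\bigr)_k$; this is why $\Delta^{(1)},\Delta^{(2)}$ are set to zero off $\mathcal I_{MK}$.

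Next I would apply $A$ and take the $\ell^1_\nu$ norm, splitting the sum over $\mathcal I_{MK}$ and its complement. On $\mathcal I_{MK}$, $A$ acts as the matrix $A^{MK}$, so bounding $|A^{MK}(\Delta|_{\mathcal I_{MK}})|$ componentwise by $|A^{MK}|\bigl(r|\Delta^{(1)}|+r^2|\Delta^{(2)}|\bigr)$ and summing the weighted norm yields precisely $r\,\Sigma^{(1)}_{MK}+r^2\,\Sigma^{(2)}_{MK}$. Off $\mathcal I_{MK}$, $A$ acts diagonally as $(\tilde\lambda\cdot m+k^2-\alpha)^{-1}$ for $m\ge2$; the key move is to extract this decaying multiplier uniformly — bounded by $(K^2-\alpha)^{-1}$ on the Fourier tail $k\ge K+1$ and by $|(\bar\lambda+r_\lambda)M-\alpha|^{-1}$ on the Taylor tail $m\ge M+1$, using $K^2>\alpha$ and the eigenvalue enclosure — so that the leftover sums $\sum_k\nu^k|\Delta_{mk}|$ and $\sum_m$ collapse, via $\|\bar p\ast_{TF}v\|_\nu\le\|\bar p\|_\nu$ and $\|u\ast_{TF}v\|_\nu\le1$, into the terms $2\alpha|c|_\nu\|\bar p\|_\nu\bigl((K^2-\alpha)^{-1}+|(\bar\lambda+r_\lambda)M-\alpha|^{-1}\bigr)r$ and $2\alpha|c|_\nu\bigl((K^2-\alpha)^{-1}+|(\bar\lambda+r_\lambda)M-\alpha|^{-1}\bigr)r^2$. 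Adding the $\epsilon r$ contribution and re-collecting powers of $r$ produces exactly $Z(r)$ of \eqref{eq:Z_boundFisher1D}, which by construction dominates $\sup_{u,v\in\mathbb{B}_1}\|DT(\bar p+ru)rv\|_\nu$, so \eqref{eq:Z} holds.

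The main obstacle is organizational rather than conceptual: keeping rigorous track of exactly which portion of $Df(\bar p+ru)$ is reproduced by the finite block of $A^\dagger$ and which portion leaks, and in particular invoking Lemma \ref{lem:boundybound1D} with the correct Taylor grading so that the Fourier-tail convolution is seen to involve only finitely many modes of $v$ — this is precisely what makes $\Delta^{(1)}$ a finitely supported, machine-checkable object. A secondary subtlety is that off $\mathcal I_{MK}$ one must resist estimating mode by mode and instead pull the decaying denominators out first; otherwise the residual sums over $k$ and over $m$ diverge, whereas in this order they collapse to the clean factors $1/(K^2-\alpha)$ and $1/|(\bar\lambda+r_\lambda)M-\alpha|$. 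No estimate beyond the Banach-algebra inequalities of Section \ref{sec:nands} and Lemma \ref{lem:convlemma}, the resolvent-type diagonal bound for $A$ off the truncation, and the bookkeeping Lemma \ref{lem:boundybound1D} is needed.
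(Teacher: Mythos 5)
Your proposal is correct and follows essentially the same route as the paper: expand $\Delta=(A^{\dag}-Df(\bar p+ru))rv$, observe that the refined $A^{\dag}$ cancels all linear/diagonal terms so that only the eigenvalue-enclosure discrepancy, the Fourier tail handled by Lemma \ref{lem:boundybound1D}, the $c^{\infty}$ contribution and the quadratic-in-$r$ convolution survive, then apply $A$ and pull out the uniform tail denominators $1/(K^{2}-\alpha)$ and $1/|(\bar\lambda+r_{\lambda})M-\alpha|$. The only difference is that you spell out the bookkeeping (which pieces live on $\mathcal{I}_{MK}$ versus its complement, and why $\mathrm{Id}-AA^{\dag}$ is finitely supported) more explicitly than the paper's terse proof does.
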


\begin{proof}
We start by expanding the difference $\Delta = Df(\bar{p}+ru)rv-A^{\dag}rv$ from equation \eqref{eq:Delta}.  Using the formula 
\[
(Df(p)q)_{mk} = \begin{cases}
q_0 & m = 0, k\geq 0\\
q_1 & m = 1, k\geq 0\\
(\tilde{\lambda}m + k^2-\alpha)q_{mk} + 2\alpha(c\ast (p\ast_{TF} q)_m)_k & m\geq 2 , k\geq 0
\end{cases} ,
\]
we obtain 
\[
Df(\bar{p}+ru)rv = \begin{cases}
rv_0& m = 0, k\geq 0\\
rv_1& m = 1, k\geq 0\\
(\tilde{\lambda}m + k^2-\alpha)rv_{mk} + 2\alpha r(c\ast (\bar{p}\ast_{TF} v)_m)_k& m\geq 2 , k\geq 0\\
2\alpha r^2(c\ast (u\ast_{TF} v)_m)_k
\end{cases}.
\]
Using the refined definition  of $A^{\dag}$ alluded to above (all linear terms cancel) we obtain for $\Delta$
\[
\Delta_{mk} = \begin{cases}
0 & m = 0,1\\
(\lambda^{\infty}m)p_{mk} + 2\alpha r(\bar{c}\ast (\bar{p}\ast_{TF} v^{\infty})_m)_k & 2\leq m\leq M, 0\leq k\leq K\\
2\alpha r(c^{\infty}\ast (\bar{p}\ast_{TF} v)_m)_k + 2\alpha r^2(c^{\infty}\ast (u\ast_{TF} v)_m)_k \\
2\alpha r(c\ast (\bar{p}\ast_{TF} v)_m)_k + 2\alpha r^2(c^{\infty}\ast (u\ast_{TF} v)_m)_k & m>M \vee k>K,
\end{cases}
\]
where we use the notation $v^{\infty}$ from Lemma \ref{lem:boundybound1D}. Now using this very Lemma and the decay information on $u,v$ we obtain \eqref{eq:a0a1}. The proof follows from applying $A$ to $\Delta$, taking the norm $\|\cdot\|_{\nu}$ and using 
\[
\frac{1}{|\tilde{\lambda}| m + k^2 -\alpha} \leq\begin{cases}
\frac{1}{|\tilde{\lambda}(M+1)| -\alpha} & m>M, k\geq 0\\
\frac{1}{(K+1)^2 -\alpha} & m\geq 0, k>K
\end{cases}.
\] 
\end{proof}
This provides us with the ingredients to define the radii polynomials by 
\[
p(r) = Y+Z(r)-r,
\]
with $Y$ given by \eqref{eq:YboundFisher1D} and $Z(r)$ given by \eqref{eq:Z_boundFisher1D}. The Matlab script
\begin{center}
 \verb|validateparametrization_c_intval.m| 
 \end{center} 
 implements these bounds. 
\begin{remark}
Using a radius $r_P$ such that $Y+Z(r_p)-r_p<0$ we can now rigorously enclose the image $P(\theta)\in\ell^{1}_\nu$ of every $\theta\in B_{1}(0)$. More precisely if $\|\tilde{p}-\bar{p}\|_{\nu}<r_P$ then $|P(\theta)-P^{MK}(\theta)|_{\nu}<r_P$. This allows is to bound the distance of $P(\theta)$ to any point using the triangle inequality. This will be used in our connecting orbit proof in Section \ref{sec:connorbit}.
\end{remark}

\paragraph{Results}
We take $\nu = 1.1$ and choose truncation dimensions in Fourier direction to be $K = 20$ and in Taylor direction $M = 60$. In Figure \ref{fig:decayplots} we illustrate the decay in these two directions. In Figure \ref{fig:flowbackward} we show an illustration of the manifold in function space and demonstrate the conjugation property of the parametrization map $P$. The corresponding computations are to befound in \verb|unstable1D_c_intval.m|

\begin{figure}[h!] \label{fig:decayplots}
\begin{center}
\includegraphics[width=8cm]{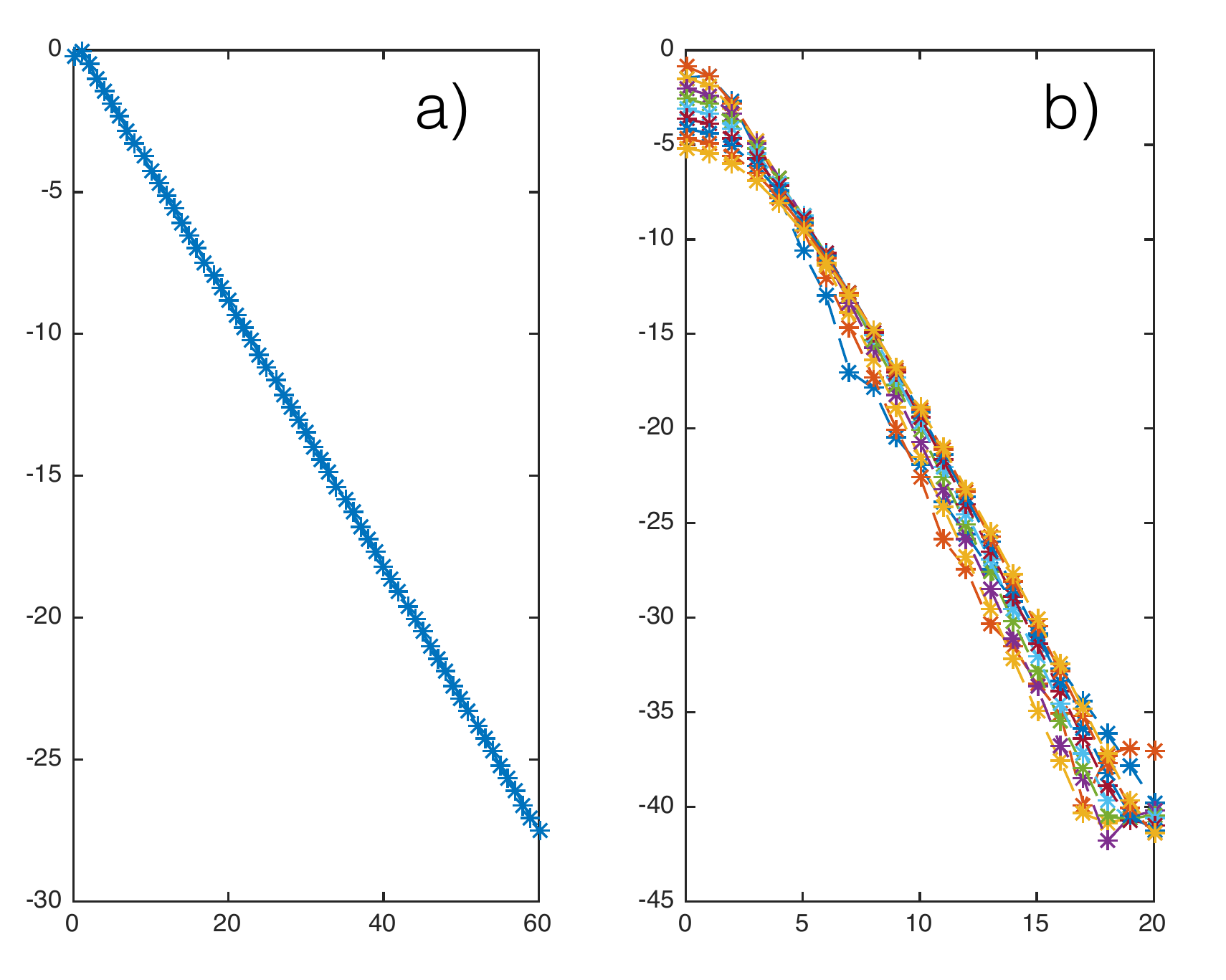}
\end{center}
\caption{a) We show the decay of $|p_m|_\nu$ with $m$. b) This illustrates the decay of the individual coefficient sequences $p_m\in\ell^1_\nu$ for $m = 0,\ldots,60$. Our approach quantifies the truncation error in both directions. } \label{fig:fisherLinearData}
\end{figure}

\begin{figure}[h!] \label{fig:flowbackward}
\begin{center}
\includegraphics[width=8cm]{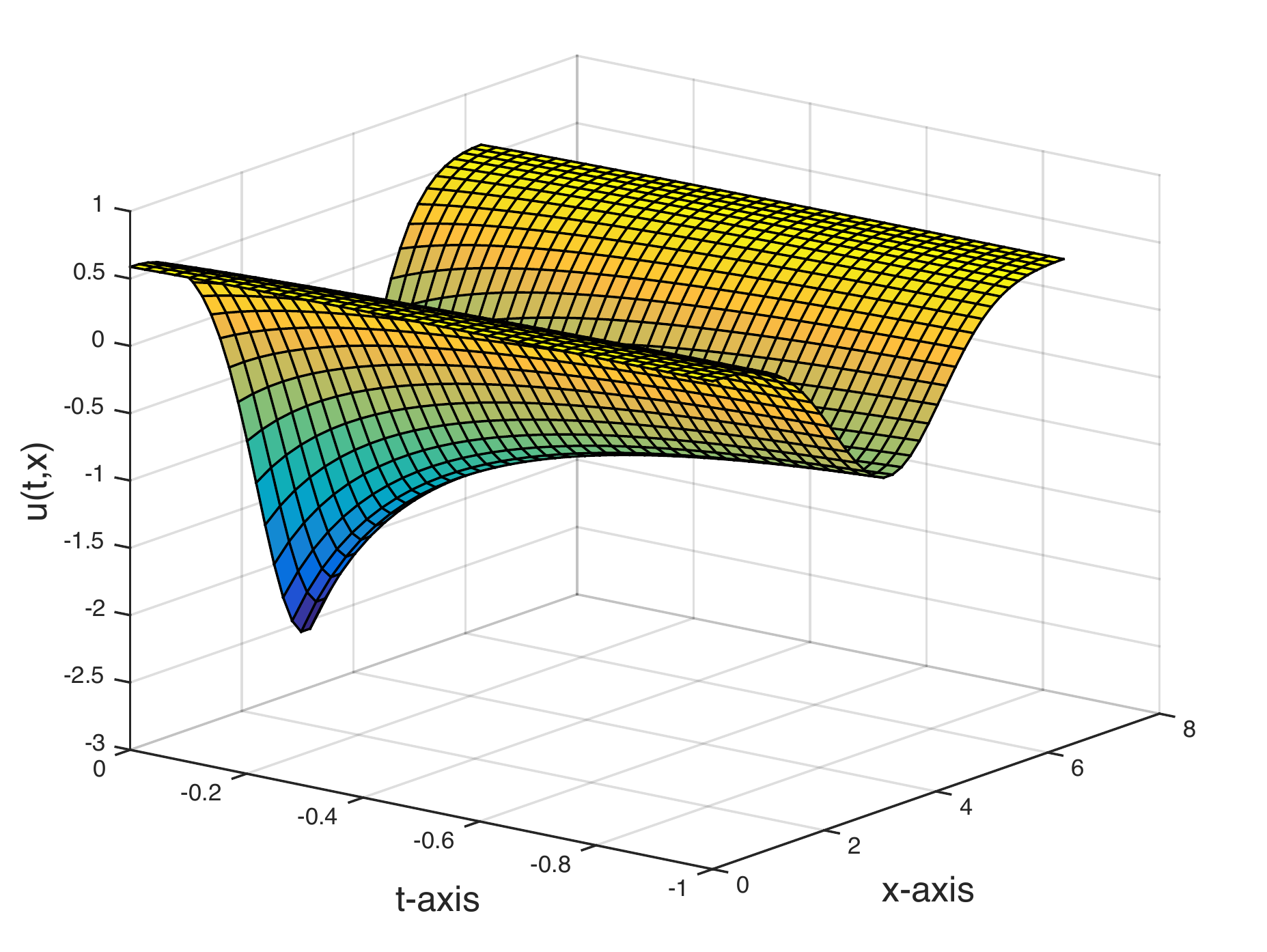}
\end{center}
\caption{3D image of the unstable manifold in function space. We use the fact that $a(t) = P(\exp(\tilde{\lambda}t))$ solves \eqref{eq:Fishers_seq} backward in time. We integrate the initial condition $P(0.9)$ backward for one time unit. Observe the convergence towards $\tilde{a}^2$.} \label{fig:fisherLinearData}
\end{figure}

\paragraph{Two unstable eigenvalues (at the origin):} 
 As alluded to above the Morse index of the origin is 2 which is clearly seen from the fact that for $b\in \ell^{1}_{\nu}$ we have that $(Dg(0)b)_k = (\alpha-k^2)b_k$ for $k\geq 0$. This implies that the unstable eigenvalues are $\tilde{\lambda}_1 = \alpha$ and $\tilde{\lambda}_2 = \alpha-1>0$ together with the eigenvectors $\tilde{\xi}_1 = (1,0,0\ldots)$ and $\tilde{\xi}_2 = (0,1,0,\ldots)$. Note that we still use tildas even though the quantities are not computed by computer-assisted means. Assume an approximate solution $\bar{p}$ corresponding to truncation dimensions $M\in\mathbb{N}^2$ with $M_i> 1$ $(i = 1,2)$ and $K>0$ to be given. In Figure \ref{fig:2Dunstable} we show an illustration of a 2D unstable manifold with truncation dimensions $M = (5,20)$ and $K = 20$. \\
 
 To explain how the estimates can be adapted to this setting let us again specify the splitting of the according to \eqref{eq:splitting_f}. Note that we deal with two-dimensional multi-indices for $m$ now but have exact linear data $\tilde{\lambda}, \tilde{\xi}$ for the equlibrium $\tilde{a}$ at the origin  at our disposal.
 
 \begin{definition}\label{def:fsplitting2D}
Assume we are given truncation dimensions $M\in\mathbb{N}^2$ and $K >0$. Split $p = \Pi_{MK}p + p^{\infty}$, where $p^{\infty} = \left(Id-\Pi_{MK}\right)p$. We set 
\begin{equation}
f^{MK}(p^{MK})_{mk} = \begin{cases}
-(p^{MK})_{0k} & m = 0 \\
(\tilde{\xi_i})_k - (p^{MK})_{e_ik} & m = e_i \quad (i = 1,2)\\
(\tilde{\lambda}\cdot m+ k^2-\alpha) p^{MK}_{mk} + \alpha (\left(p^{MK}\ast_{TF}p^{MK}\right)_{m}^{MK})_k^{K} & |m|\geq 2, m\preceq M \\&0\leq k \leq K
\end{cases}
\end{equation}
and 
\begin{equation}\label{eq:f_infty}
(f^{\infty}(p))_{m} = \begin{cases}
 - p^{\infty}_{0k} & m = 0, k\geq 0\\
- p^{\infty}_{1k}& m =  e_i \quad (i = 1,2), k\geq 0\\
(\tilde{\lambda}\cdot m+k^{2}-\alpha)p^{\infty}_{mk}+\\
\alpha (Id-\Pi_{MK})\left((p^{MK}\ast_{TF}p^{MK}\right)_{m})_k+\\
\alpha\left[((2p^{MK}\ast_{TF}p^{\infty})_{mk} + (p^{\infty}\ast_{TF}p^{\infty})_{m})_k\right]& |m|\geq 2 , k\geq 0
\end{cases}.
\end{equation}
Recall $\tilde{\lambda}\cdot m = \tilde{\lambda}_1m_1 + \tilde{\lambda}_2m_2 $. Note that in this case we can explicitly check the non-resonance condition from Definition \ref{def:nonRes} by checking that $\frac{\tilde{\lambda}_i}{\tilde{\lambda}_j}\notin \mathbb{N}$ for $i,j = 1,2$. Then $f(p) = f^{MK}(p^{MK}) + f^{\infty}(p)$.
\end{definition}

Following the same steps as we did in the previous Section we can derive $Y$- and $Z$- bounds. We can formulate the corresponding Theorems to \ref{th:Y1D_c1} and \ref{th:Z1D_c1}. For the implementation we refer to \verb|validateparametrizationorigin2D.m| to be found at \cite{parmPDEcode}. For a specific example calculation see \verb|manifoldorigin2D.m|, where we compute and validate the unstable manifold for the specific truncation dimension $M = [5,20]$. We choose  $\nu = 1.01$ and obtain a validation radius of $r = 5.978461\times 10^{-10}$. The results are illustrated in Figure \ref{fig:2Dunstable}.

\begin{theorem}\label{th:Y2D_c1} Y-bounds - 2D\\
Assume truncation dimensions $M\in\mathbb{N}^2$ and $K>0$ and an approximate zero $\bar{p}$, with $\Pi_{MK}\bar{p} = \bar{p}$ to be given. Define  
\begin{equation}\label{eq:Y_MK}
Y_m^{MK} = 
|(Df^{MK}(\bar{p})\bar{p})_{m0}| + 2\displaystyle\sum_{k = 1}^{K}|(Df^{MK}(\bar{p})\bar{p})_{mk}|\nu^{k}, \quad m\preceq M, 
\end{equation}
and 
\begin{equation}\label{eq:Y_infty}
Y_{m}^{\infty} = \begin{cases}
\displaystyle2\sum_{k = K+1}^{2K}\frac{\alpha|((\bar{p}\ast_{TF}\bar{p})_m)_{k}|}{\tilde{\lambda}\cdot m+k^{2}-\alpha}\nu^{k} & |m|\geq 2, m\preceq M\\
\frac{\alpha |(\bar{p}\ast_{TF}\bar{p})_{m0}|}{\tilde{\lambda}\cdot m} + 2 \displaystyle\sum_{k = 1}^{3K}\frac{\alpha|((\bar{p}\ast_{TF}\bar{p})_m)_{k}|}{\tilde{\lambda}\cdot m+k^{2}-\alpha}\nu^k & M+1\preceq  m\preceq 2M,
\end{cases}.
\end{equation}
Then 
\begin{equation}\label{eq:YboundFisher1D} 
Y = \displaystyle\sum_{m_1 = 0}^{M_1} \displaystyle\sum_{m_2 = 0}^{M_2}Y^{MK}_m +\displaystyle\sum_{\stackrel{|m|\geq 2}{m\preceq M}}Y^{\infty}_m + \displaystyle\sum_{\stackrel{m_1>M_1\vee m_2>M_2}{m\preceq 2M}}Y^{\infty}_m
\end{equation}  fulfills \eqref{eq:Y}.
\end{theorem}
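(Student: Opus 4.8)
The plan is to follow the proof of Theorem~\ref{th:Y1D_c1} essentially verbatim, with two simplifications and one complication. The simplifications are that at the origin the equilibrium $\tilde{a}$, the unstable eigenvalues $\tilde{\lambda}_1=\alpha$, $\tilde{\lambda}_2=\alpha-1$ and the eigenvectors $\tilde{\xi}_i=e_i$ are all known exactly, so none of the ``error'' terms $r_{\bar a}$, $r_{\bar\xi}$, $\delta^\infty$ appearing in the one-dimensional statement are present, and the inhomogeneity is $c\equiv1$, so the Fourier convolutions involved are quadratic rather than cubic in the Fourier index. The complication is purely combinatorial: the Taylor multi-indices are now two-dimensional, so the support of the Fourier--Taylor convolution $\bar{p}\ast_{TF}\bar{p}$ must be tracked in both the Taylor and the Fourier direction.

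First I would record the form of $f^\infty$ at the approximate zero. Since $\Pi_{MK}\bar{p}=\bar{p}$ we have $p^\infty=0$ when $p=\bar{p}$, and the splitting of Definition~\ref{def:fsplitting2D} collapses: the components $(f^\infty(\bar{p}))_{0k}$ and $(f^\infty(\bar{p}))_{e_ik}$ vanish identically, while for $|m|\geq2$
\[
(f^\infty(\bar{p}))_{mk}=\alpha\,\big(\mathrm{Id}-\Pi_{MK}\big)\big((\bar{p}\ast_{TF}\bar{p})_m\big)_k .
\]
Because $\bar{p}$ is supported on $\{m\preceq M,\ k\leq K\}$, the convolution $\bar{p}\ast_{TF}\bar{p}$ is supported on $\{m\preceq2M,\ k\leq2K\}$; hence $f^\infty(\bar{p})$ has only finitely many nonzero entries, each lying either in the Fourier tail $K+1\leq k\leq2K$ with $m\preceq M$ (and then necessarily $|m|\geq2$), or in the Taylor tail $M\prec m\preceq2M$.

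Next I would evaluate $(Af(\bar{p}))_m$ componentwise from $f(\bar{p})=f^{MK}(\bar{p}^{MK})+f^\infty(\bar{p})$ using the three branches of $A$ in~\eqref{eq:A} (with the refined diagonal symbol $\tilde{\lambda}\cdot m+k^2-\alpha$, as in the $1$D case, the $k=0$ entries treated separately). On $\mathcal{I}_{MK}$ only the finite block $f^{MK}(\bar{p}^{MK})$ contributes, and this part is handled exactly as in the proof of Theorem~\ref{th:Y1D_c1} and contributes at most $Y^{MK}_m$ to $|(Af(\bar{p}))_m|_{\nu}$. For $m\preceq M$ with $k>K$ the only contribution is the Fourier tail of $\alpha(\bar{p}\ast_{TF}\bar{p})_m$; since such an $m$ has $|m|\geq2$, $A$ divides by $\tilde{\lambda}\cdot m+k^2-\alpha$, which is a strictly positive real bounded away from zero (by non-resonance, and in fact because $\tilde{\lambda}\cdot m=\alpha m_1+(\alpha-1)m_2\geq2(\alpha-1)>\alpha$ for $|m|\geq2$); summing over $K+1\leq k\leq2K$ with the weights $\nu^k$ gives the first case of $Y^\infty_m$. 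For $M\prec m\preceq2M$ the analogous computation, with the $k=0$ term divided by $\tilde{\lambda}\cdot m$, gives the second case of $Y^\infty_m$. Summing $|(Af(\bar{p}))_m|_{\nu}$ over the finitely many relevant $m$ and invoking the definition of $\|\cdot\|_{\nu}$ reproduces the stated formula for $Y$, which therefore satisfies~\eqref{eq:Y}.

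I do not expect a genuine obstacle here; the whole argument is the multi-index bookkeeping just described, and the only points requiring care are to attribute each nonzero component of $f(\bar{p})$ to the correct branch of $A$, to remember that the Taylor support of $\bar{p}\ast_{TF}\bar{p}$ reaches $2M$ while its Fourier support reaches only $2K$, and to isolate the $k=0$ contributions where the diagonal symbol degenerates. The non-resonance condition (checked via $\tilde{\lambda}_1/\tilde{\lambda}_2,\tilde{\lambda}_2/\tilde{\lambda}_1\notin\mathbb{N}$, which holds for $\alpha=2.1$) together with $M_i\geq2$ guarantees that every denominator appearing in $A$ and in the formula for $Y$ is a well-defined strictly positive real, so that $Y$ is finite.
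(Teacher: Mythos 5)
Your proposal is correct and follows exactly the route the paper intends: the published proof of Theorem~\ref{th:Y2D_c1} is literally ``Analogue to Theorem~\ref{th:Y1D_c1},'' and your write-up is that analogue carried out in full, with the right simplifications (exact linear data at the origin, so no $r_{\bar a}$, $r_{\bar\xi}$, $\delta^\infty$ terms, and $c\equiv 1$ so the convolution is quadratic) and the right bookkeeping for the two-dimensional multi-indices. Your observation that the Fourier support of $\bar p\ast_{TF}\bar p$ reaches only $2K$ is in fact slightly sharper than the stated bound, whose second case harmlessly sums up to $3K$.
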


\begin{proof}
Analogue to Theorem \ref{th:Y1D_c1}.
\end{proof}

 \begin{figure}[h!] \label{fig:2Dunstable}
\begin{center}
\includegraphics[width=8cm]{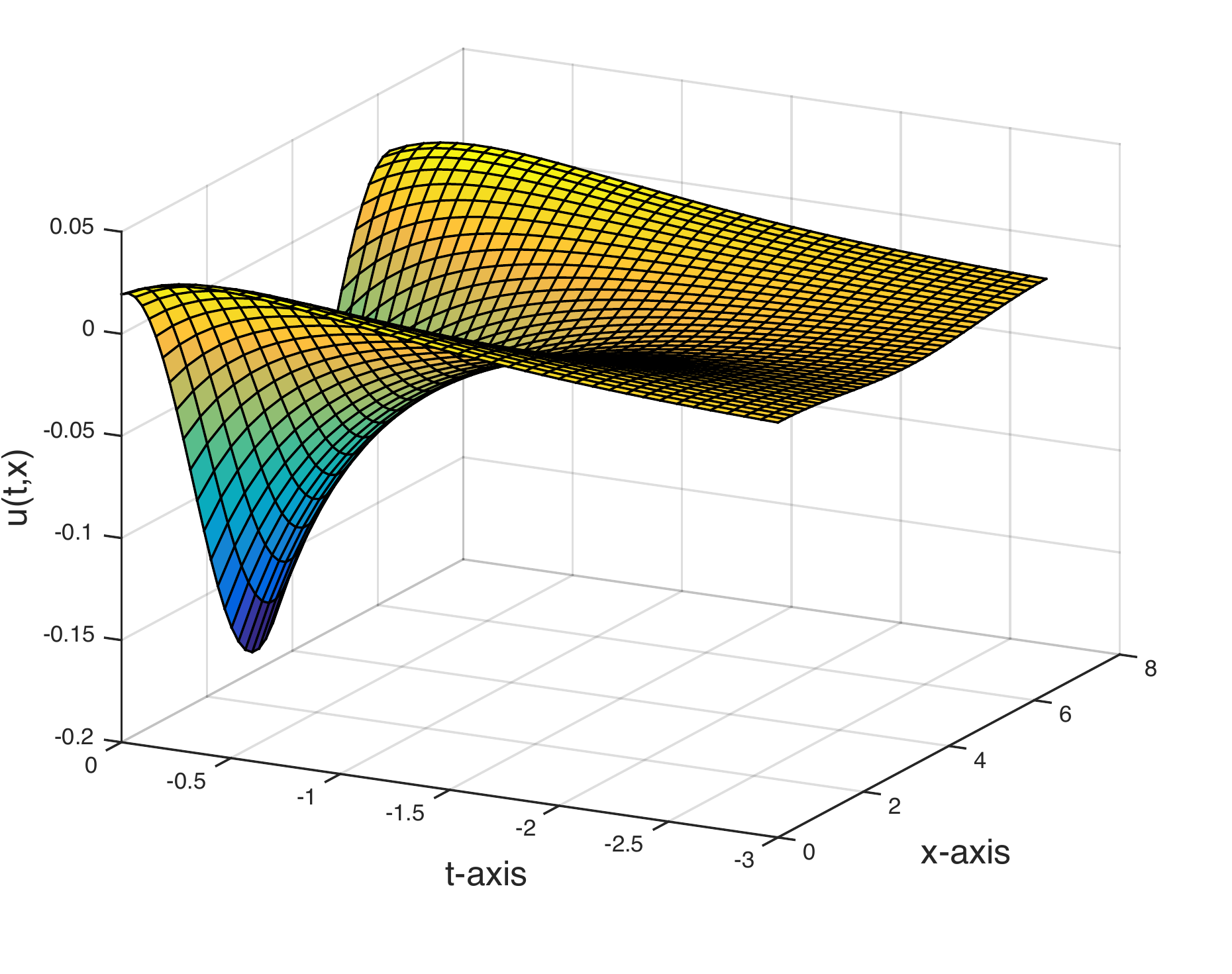}
\end{center}
\caption{We show the local unstable manifold computed by using that $a(t) = P\left(\exp(\tilde{\Lambda}t)\theta\right)$ is a solution for each $\theta\in\mathbb{B}_1$ $t\leq0$. Here $\tilde{\Lambda}\in\mathbb{R}^{2,2}$ the diagonal matrix with the unstable eigenvalues $\tilde{\lambda}_{1,2}$ as entries. We integrate P(0.01,0.95) backward for 3 time units and observe the convergence to the stationary solution $\tilde{u} = 0$. We scale the eigenvectors with $0.01$ and $0.05$ respectively.}  \label{fig:2Dunstable}
\end{figure}

\begin{theorem}\label{th:Z2D_c1} Z-bounds - 2D\\
Assume truncation dimensions $M\in\mathbb{N}^2$ and $K>0$ and an approximate zero $\bar{p}$, with $\Pi_{MK}\bar{p} = \bar{p}$ to be given. Define 
\begin{subequations}\label{eq:linear_data}
\label{eq:a0a1}
\begin{alignat}{1}
|\Delta^{(1)}|_{mk} &= \begin{cases}
0 & m = 0,e_i (i = 1,2)\quad k\geq 0\\
\displaystyle\sum_{l_1 = 0}^{m_1} \displaystyle\sum_{l_2 = 0}^{m_2}h_{lk}(\bar{p}) & |m|\geq 2, m\preceq M, 0\leq k\leq K\\
0 & (m\geq M+1,k\geq 0) \vee (0\leq m\leq M, k\geq K+1)
\end{cases}\\
|\Delta^{(2)}|_{mk}&=\begin{cases}
0 & m = 0,e_i (i = 1,2)\quad k\geq 0\\
\frac{2\alpha}{\nu^{k}} & 2\leq m\leq M, 0\leq k\leq K\\
0 & (m\geq M+1,k\geq 0) \vee (0\leq m\leq M, k\geq K+1)
\end{cases}\end{alignat}
\end{subequations},
\begin{equation}
\Sigma^{(j)}_{MK} = \displaystyle\sum_{m_1 = 0}^{M_1} \displaystyle\sum_{m_2 = 0}^{M_2}\left(|(|A||\Delta^{(j)}|)_0| + \displaystyle\sum_{k = 1}^{K}|(|A||\Delta^{(j)}|)_k|
\nu^k\right).
\end{equation}
and $\epsilon$ such that $\sup_{v\in B_{1}(0)}\|(Id-AA^{\dag})rv\|_{\nu}\leq \epsilon r$. Then 
\begin{equation}\label{eq:Z_boundFisher1D}
\begin{aligned}
Z(r) = &r\left(\epsilon + \Sigma_{MK}^{(1)}+ 2\alpha\|\bar{p}\|_{\nu}\left(\frac{1}{K^{2}-\alpha}+\frac{1}{\min(|\tilde{\lambda}_1|M_1,|\tilde{\lambda}_2|M_2)-\alpha}\right)\right)\\
&+r^{2}\left(\Sigma_{MK}^{(2)}+ 2\left(\frac{1}{K^{2}-\alpha}+\frac{1}{\min(|\tilde{\lambda}_1|M_1,|\tilde{\lambda}_2|M_2)-\alpha}\right)\right)
\end{aligned}
\end{equation}
fulfills \eqref{eq:Z}.
\end{theorem}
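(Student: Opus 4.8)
The plan is to follow the proof of Theorem~\ref{th:Z1D_c1} essentially line by line, with two simplifications and one purely bookkeeping complication. The simplifications are that at the origin the linear data $\tilde\lambda,\tilde\xi$ and the equilibrium $\tilde a=0$ are \emph{exact}, so every term built from $r_\lambda$, $r_{\bar a}$, $r_{\bar\xi}$, $\lambda^\infty$ or $c^\infty$ disappears; and that the inhomogeneity is $c(x)\equiv 1$, so $|c|_\nu=1$ and every $\bar c$-convolution in the 1D argument collapses to an ordinary Taylor--Fourier convolution. The complication is that $m$ is now a two-dimensional multi-index, so each $\sum_{m=0}^{M}$ becomes $\sum_{m_1=0}^{M_1}\sum_{m_2=0}^{M_2}$ and each Taylor tail $\sum_{l=0}^{m}$ becomes $\sum_{l_1=0}^{m_1}\sum_{l_2=0}^{m_2}$, exactly as already reflected in the stated formulas for $|\Delta^{(1)}|$, $|\Delta^{(2)}|$ and $\Sigma^{(j)}_{MK}$.

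Concretely, I would start from the splitting
\[
DT(\bar p+ru)rv=(\text{Id}-AA^\dagger)rv-A\big[(A^\dagger-Df(\bar p+ru))rv\big],
\]
triangulate, and absorb the first summand into $\epsilon r$ by the definition of $\epsilon$. For the second summand I would write out $Df(\bar p+ru)rv$ componentwise from Definition~\ref{def:fsplitting2D}, namely $(Df(p)q)_{mk}=(\tilde\lambda\cdot m+k^2-\alpha)q_{mk}+2\alpha(p\ast_{TF}q)_{mk}$ for $|m|\ge2$ (and $q_{0k}$, $q_{e_ik}$ for the linear indices). Subtracting the refined $A^\dagger$ — the version in which $\mu_k$ of \eqref{eq:Adag} is replaced by $\mu_k-\alpha$, so that all linear terms cancel — the resulting $\Delta=(A^\dagger-Df(\bar p+ru))rv$ vanishes on $m=0,e_1,e_2$; equals $\mp2\alpha r(\bar p\ast_{TF}v^\infty)_{mk}$ on the block $\{|m|\ge2,\ m\preceq M,\ 0\le k\le K\}$, because the finite support of $\bar p$ leaves no Taylor-direction tail so only the Fourier tail $v^\infty$ survives; and equals $\mp2\alpha r(\bar p\ast_{TF}v)_{mk}\mp2\alpha r^2(u\ast_{TF}v)_{mk}$ on the remaining indices $m\succ M$ or $k>K$. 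Only $|\Delta|$ enters the estimate, so the sign is irrelevant; this is the decomposition $\Delta=r\Delta^{(1)}+r^2\Delta^{(2)}$ of \eqref{eq:Delta}.

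To bound the finite block I would invoke Lemma~\ref{lem:boundybound1D} in its stated multi-index form: with $c\equiv1$ it gives $|(\bar p_l\ast v^\infty)_k|\le h_{lk}^1(\bar p)+h_{lk}^2(\bar p)=:h_{lk}(\bar p)$, and summing over $l\preceq m$ produces the componentwise majorant $|\Delta^{(1)}|_{mk}$; the quadratic part $2\alpha(u\ast_{TF}v)$ with $\|u\|_\nu,\|v\|_\nu\le1$ is majorized componentwise by $2\alpha/\nu^k$, giving $|\Delta^{(2)}|_{mk}$. Applying $|A|$ componentwise on this block (i.e.\ $|A^{MK}|$) and taking the $\|\cdot\|_\nu$ norm gives $r\Sigma^{(1)}_{MK}+r^2\Sigma^{(2)}_{MK}$. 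On the tail indices $A$ acts diagonally, $|A_{mk}|=|\tilde\lambda\cdot m+k^2-\alpha|^{-1}$, and I would use $|\tilde\lambda\cdot m+k^2-\alpha|^{-1}\le(K^2-\alpha)^{-1}$ when $k>K$ and $|\tilde\lambda\cdot m+k^2-\alpha|^{-1}\le(\min(\tilde\lambda_1M_1,\tilde\lambda_2M_2)-\alpha)^{-1}$ when $m\succ M$; both denominators are positive since $K^2>\alpha$ by hypothesis and $M_i\ge2$ with $\alpha=2.1$ give $\min(\tilde\lambda_1M_1,\tilde\lambda_2M_2)=\min(2.1M_1,1.1M_2)\ge 2.2>\alpha$. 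Bounding $\|\bar p\ast_{TF}v\|_\nu\le\|\bar p\|_\nu$ and $\|u\ast_{TF}v\|_\nu\le1$ by the Banach algebra estimate of Lemma~\ref{lem:convlemma}, and multiplying by $2\alpha$ (with $|c|_\nu=1$), yields the $2\alpha\|\bar p\|_\nu(\cdots)r+2\alpha(\cdots)r^2$ terms. Collecting the three sources — $\epsilon r$, the $\Sigma^{(j)}_{MK}$ blocks, and the tail estimates — gives precisely the claimed $Z(r)$ of \eqref{eq:Z_boundFisher1D}, hence \eqref{eq:Z} holds.

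The only genuinely delicate point, and the one I would be most careful about, is the region decomposition of $\Delta$: one must cleanly separate the block $\{|m|\ge2,\ m\preceq M,\ 0\le k\le K\}$ — where $\bar p$'s finite support forces the discrepancy between the full convolution and its truncation to be exactly the Fourier tail controlled by Lemma~\ref{lem:boundybound1D} — from the genuinely infinite-dimensional region $\{m\succ M\}\cup\{k>K\}$, on which $A$ acts diagonally with the algebraically growing eigenvalues $\tilde\lambda\cdot m+k^2-\alpha$. Getting that partition, and matching it with the two cases of the $A$-action, is what makes the estimate close; everything else is a routine transcription of the one-dimensional argument with double sums in the Taylor direction and $|c|_\nu=1$.
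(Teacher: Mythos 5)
Your proposal is correct and is essentially the paper's own proof, which is given only as ``similar to the one of Theorem \ref{th:Z1D_c1}'': you transcribe the 1D argument with exact linear data, $|c|_\nu=1$, and double sums in the Taylor direction, exactly as intended. One small slip in your region decomposition: the quadratic term $2\alpha r^2(u\ast_{TF}v)_{mk}$ is present on the finite block $\{|m|\ge 2,\ m\preceq M,\ 0\le k\le K\}$ as well (not only on the tail indices), but you in fact account for it there via $|\Delta^{(2)}|_{mk}=2\alpha/\nu^k$, so the estimate still closes.
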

\begin{proof}
The proof is similar to the one of Theorem \ref{th:Z1D_c1}.
\end{proof}

\paragraph{Lower triangular structure}

To conclude this section let us remark on some implementation related issues. Recalling \eqref{eq:TF_convolution} to analyze the structure of $(X^{\nu,d},\ast_{TF})$ we see that $(p\ast_{TF}q)_m$ does only depend on terms with $\tilde{m}\preceq m$. This entails that the derivative $\frac{\partial}{\partial p_{\tilde{m}}}f_{m}\in\mathcal{L}(\ell^1_\nu,\ell^1_{\tilde{\nu}})$ $(1<\tilde{\nu}<\nu)$ is only non-zero for $\tilde{m}\preceq m$. We refer to this structure as lower triangular structure. We use this structure in our numerical implementation. To explain the idea let us restrict to the case $d = 1$, so there is a canoncial ordering on the indexing set $\mathbb{N}$ for $m$. Given truncation dimensions $M>0$ and $K>0$ the map $f^{MK}$ is a nonlinear map on $\mathbb{R}^{(M+1)(K+1)}$. Its derivative matrix $Df^{MK}(p)\in \mathbb{R}^{(M+1)(K+1),(M+1)(K+1)}$ has the following block structure:
\[
Df^{MK}(p) = \begin{pmatrix}
B_{00} & 0 & 0 \\
B_{10} & B_{11} & 0 & 0\\
\vdots & \vdots & \ddots & 0 \\
B_{M0} & B_{11} & \cdots & B_{MM}
\end{pmatrix},
\]
with $B_{ij} = \frac{\partial}{\partial p_{j}}f^{MK}_i(p)\in\mathbb{R}^{K+1,K+1}.$ Thus to solve a linear system of the form $Df^{MK}(p)h = b$, with $h,b\in\mathbb{R}^{(M+1)(K+1)}$ we can implement a block backsubstitution algorithm, that necessitates the solution of $M+1$ systems of size $(K+1)$ instead of the solution  of one system of size $(M+1)(K+1)$. So the complexity is $O((K+1)^3)$ against $O(((M+1)(K+1))^3)$ In the case that $d>1$ we see that this translates to the solution of $\prod_{i = 1}^d M_i$ systems of size $(K+1)$ making the advantage drawn from this way of implementation even more crucial.

\subsection{Computer assisted proof of a heteroclinic connecting orbit}\label{sec:connorbit}

In this section we demonstrate how our technique can be used to prove the existence of a connecting orbit. We restrict our attention to the constant case $c(x) = 1$.  In this case the fixed point $\frac{1}{c(x)}$ reduces in Fourier space to $\tilde{a}^1 = (1,0,0,\ldots)$. We see directly from the linearization of $g$ around $\tilde{a}^1 = (1,0,0,\ldots)$ that $\tilde{a}^1$ is spectrally stable, as all eigenvalues are negative. In addition we can derive nonlinear stability information in terms of an attracting neighborhood of $\tilde{a}^1$, see Lemma \ref{lem:attr_domain} below.  
\begin{lemma}\label{lem:attr_domain}
The  equilibrium point $\tilde{a}^1$ of  \eqref{eq:Fishers_seq} is an attracting fixed point with attracting neighborhood 
\begin{equation}
\mathcal{A} = \left\{a\in \ell^{1}_{\nu}: |a-\tilde{a}^1|_{\nu}<1\right\}.
\end{equation} 
\end{lemma}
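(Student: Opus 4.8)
The plan is to show that the open ball $\mathcal{A} = \{a \in \ell^1_\nu : |a - \tilde a^1|_\nu < 1\}$ is forward invariant under the semiflow of \eqref{eq:Fishers_seq} and that every trajectory starting in $\mathcal{A}$ converges to $\tilde a^1$. Since $c(x) = 1$ we have $c = e_0$, so the nonlinearity is simply the quadratic convolution $a \mapsto \alpha a - \alpha (a * a)$, and $g_k(a) = (\alpha - k^2) a_k - \alpha (a*a)_k$. The natural substitution is $b = a - \tilde a^1$, i.e.\ $b_0 = a_0 - 1$, $b_k = a_k$ for $k \geq 1$. A direct computation using $g(\tilde a^1) = 0$ and the Banach algebra property of $(\ell^1_\nu, *)$ gives the equation for the perturbation,
\[
b_k' = (\alpha - k^2) b_k - 2\alpha b_k - \alpha (b*b)_k = (-\alpha - k^2) b_k - \alpha (b*b)_k,
\]
because $\tilde a^1 * b = (b_0 + 1, b_1, b_2, \ldots) - $ wait, more carefully $2\alpha(\tilde a^1 * b)_k = 2\alpha b_k$ since $\tilde a^1 = e_0$ is the convolution identity. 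So the linear part of the perturbation equation has eigenvalues $-\alpha - k^2 \leq -\alpha < 0$, and in particular $\mathrm{real}(\mu_k^{\mathrm{lin}}) \leq -\alpha$ for all $k \geq 0$.

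The key step is a differential inequality for $V(t) := |b(t)|_\nu$. First I would justify that $t \mapsto |b(t)|_\nu$ is absolutely continuous along classical solutions and that $\frac{d}{dt}|b|_\nu \leq |b'|_\nu^{(\mathrm{signed})}$ in the appropriate weak sense; concretely, writing $|b|_\nu = |b_0| + 2\sum_{k\geq 1}|b_k|\nu^k$ and using $\frac{d}{dt}|b_k| \leq \mathrm{sign}(b_k)\, b_k'$ (valid where $b_k \neq 0$, and the set where some $b_k = 0$ is handled by the standard Dini-derivative argument), one obtains
\[
\frac{d}{dt}|b|_\nu \leq \sum_{k\geq 0} w_k \big( (-\alpha - k^2)|b_k| + \alpha |(b*b)_k| \big)
\leq -\alpha |b|_\nu + \alpha |b*b|_\nu \leq -\alpha |b|_\nu + \alpha |b|_\nu^2,
\]
where $w_0 = 1$, $w_k = 2\nu^k$ for $k \geq 1$, and the last inequality is exactly Lemma on the Banach algebra bound $|b*b|_\nu \leq |b|_\nu^2$. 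Therefore
\[
\frac{d}{dt}|b|_\nu \leq -\alpha |b|_\nu \big( 1 - |b|_\nu \big).
\]
If $|b(0)|_\nu < 1$ then the right-hand side is strictly negative as long as $|b|_\nu \in (0,1)$, so $|b(t)|_\nu$ is strictly decreasing, never reaches $1$, and in fact $|b(t)|_\nu \to 0$: comparison with the scalar logistic-type ODE $y' = -\alpha y(1-y)$, $y(0) = |b(0)|_\nu < 1$, whose solutions decay to $0$, gives the rate and the limit. This simultaneously proves forward invariance of $\mathcal{A}$ and attraction to $\tilde a^1$ within $\mathcal{A}$.

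The main obstacle is the analytic bookkeeping needed to make $\frac{d}{dt}|b|_\nu$ rigorous: the $\ell^1_\nu$ norm is not differentiable, and \eqref{eq:Fishers_seq} is an infinite system generated by a sectorial operator, so one must argue that classical (mild) solutions starting in $\mathcal{A}$ stay in $\ell^1_\nu$, remain smooth enough in $t$ to run the Dini-derivative computation, and that the termwise manipulations (interchanging the sum with $\frac{d}{dt}$) are justified by the absolute convergence guaranteed by $b \in \ell^1_\nu$ together with the parabolic smoothing that controls $b'(t)$. I expect this to be routine given the semigroup framework set up earlier in the paper (the operator $A$ with eigenvalues $\alpha - k^2$ generates a compact analytic semigroup, and the perturbed generator is bounded), but it is the only place where care is required; everything else reduces to the elementary logistic comparison above. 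A minor point to check separately is that $\mathcal{A}$ contains the origin only if $|\tilde a^1|_\nu = 1 < 1$ fails — it does not, $|\tilde a^1 - 0|_\nu = 1$ is on the boundary — which is consistent with the origin being an unstable equilibrium and not in the open attracting neighborhood.
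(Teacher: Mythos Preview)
Your proof is correct and arrives at the same conclusion, but the route is genuinely different from the paper's.  You run a Lyapunov-type differential inequality: after deriving the same perturbation equation $b_k' = -(\alpha+k^2)b_k - \alpha(b*b)_k$, you bound the upper Dini derivative of $|b(t)|_\nu$ termwise and obtain the closed scalar inequality $\tfrac{d}{dt}|b|_\nu \le -\alpha|b|_\nu(1-|b|_\nu)$, from which invariance of $\mathcal{A}$ and convergence follow by comparison with the logistic ODE.  The paper instead works with the mild (integral) formulation: it writes the variation-of-constants formula, uses the semigroup bound $|e^{Lt}h_0|_\nu \le e^{-\alpha t}|h_0|_\nu$, and then applies Gronwall's inequality on successive time intervals to build a decreasing sequence $|h(t_k)|_\nu$ and argue its limit is zero.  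Your approach is shorter and more transparent once one accepts the Dini-derivative bookkeeping you flag; the paper's approach sidesteps that issue entirely because the integral form only needs continuity of $t\mapsto |h(t)|_\nu$, at the cost of a slightly more indirect endgame (the sequence-of-times argument).  Both rely on exactly the same two ingredients: the spectral gap $-\alpha$ of the linearization and the Banach-algebra estimate $|b*b|_\nu\le |b|_\nu^2$.
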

\begin{proof}
See Appendix \ref{App1}.
\end{proof}
\begin{figure}[h!] 
\begin{center}
\includegraphics[width=10cm]{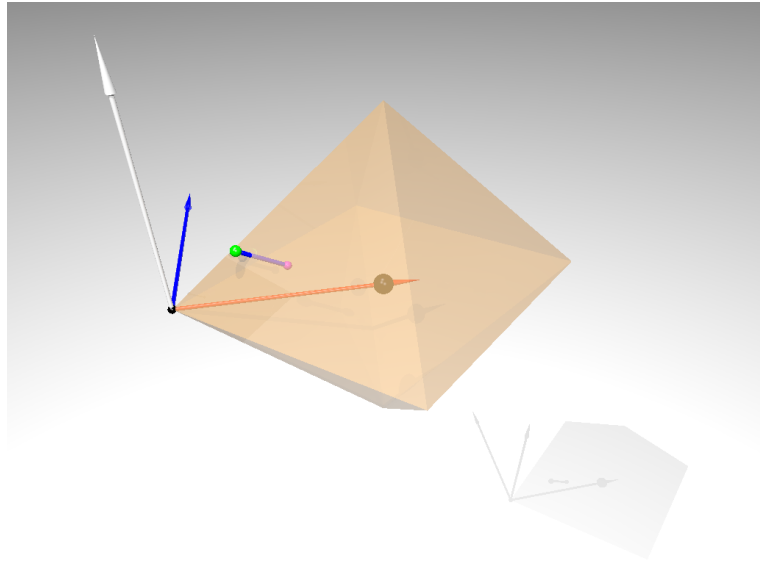}
\end{center}
\caption{
The green dot shows the projection of the non-trivial fixed point $\tilde{a}^2$ to the $(a_0,a_1,a_2)$ coordinate plane. The blue line is the part of the 1D local unstable manifold of $\tilde{a}^2$ that we validate whose endpoint is marked in purple and lies inside the domain of attraction of the sink  
} \label{fig:fisherconnection}
\end{figure}
\noindent This provides us with the ingredient to state the following Theorem proving the existence of a connecting orbit from the equilibrium $\tilde{a}^2$ with Morse index 1 to the sink $(1,0,0,\ldots)$. Note that we can use Lemma \ref{thm:fisherEquilibRadPolyBounds}, \ref{lem:stabilityCheck} and \ref{lem:eigCountFisher} as well as  Theorem \ref{th:Y1D_c1} and \ref{th:Z1D_c1} to compute this equilbirium $\tilde{a}^2$ together with its stability information. This is achieved in the Matlab script \verb|a_validateLinearDatac1_paperVersion.m| at the parameter value $\alpha = 2.1$. 
\begin{theorem}
Let $\nu = 1.1$. Let $P$ be a parametrization of the unstable manifold of $\tilde{a}^2$ with $|DP(0)|_\nu = 0.68194897863182\pm10^{-16}$. For $\theta = -0.505050505050505\pm10^{-16}$ we have that $P(\theta)\in\mathcal{A}$. Hence there is a connecting orbit from $\tilde{a}^2$ to $\tilde{a}^1$.
\end{theorem}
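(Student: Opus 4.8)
The strategy is to exhibit a single orbit of \eqref{eq:Fishers_seq} lying on a rigorously validated local unstable manifold of $\tilde a^2$ whose forward evolution is captured by the absorbing neighbourhood $\mathcal{A}$ of the sink $\tilde a^1 = (1,0,0,\dots)$; gluing the backward half of this orbit (on $W^u(\tilde a^2)$) to the forward half (inside $\mathcal{A}$) at their common point $P(\theta)$ then produces the heteroclinic connection. Throughout we work at $\alpha = 2.1$, $c(x)\equiv 1$, in $\ell^{1}_{\nu}$ with $\nu = 1.1$. The first task is the chain of preliminary validations of Section \ref{sec:firstorderdata}: apply Lemma \ref{thm:fisherEquilibRadPolyBounds} to enclose a true equilibrium $\tilde a^2$ with $g(\tilde a^2) = 0$, Lemma \ref{lem:stabilityCheck} to enclose its unstable eigenpair $(\tilde\lambda, \tilde\xi)$, and Lemma \ref{lem:eigCountFisher} to certify that $Dg(\tilde a^2)$ has exactly one unstable eigenvalue, so that $W^u(\tilde a^2)$ is one-dimensional and the parameterization method applies.

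Next I would construct and validate the parameterization $P$. Fix the scaling $s$ of $\tilde\xi$ so that $|DP(0)|_\nu = |s|\,|\tilde\xi|_\nu = 0.68194897863182 \pm 10^{-16}$; by uniqueness up to the choice of eigenvector scaling this (with the sign absorbed into $\theta$) pins $P$ down. Compute an approximate coefficient array $\bar p = \Pi_{MK}\bar p$ with $f^{MK}(\bar p^{MK})\approx 0$ for $f$ as in Definition \ref{def:fsplitting1D}, and evaluate the $Y$- and $Z$-bounds of Theorems \ref{th:Y1D_c1} and \ref{th:Z1D_c1} in interval arithmetic. If the radii polynomial $p(r) = Y + Z(r) - r$ is negative at some $r_P > 0$, then Lemma \ref{eq:radpol_lem} yields a true solution $\tilde p$ of $f(\tilde p) = 0$ with $\|\tilde p - \bar p\|_\nu \le r_P$; the associated series $P$ of \eqref{eq:powerSeriesAnstaz} solves the invariance equation \eqref{eq:func_eqBA} with the first order constraints, so by Lemma \ref{lem:flowInv} its image is a genuine local unstable manifold and $t \mapsto P(\exp(\tilde\lambda t)\theta)$ is a solution of \eqref{eq:Fishers_seq} on $(-\infty, T(\theta))$ with $P(\exp(\tilde\lambda t)\theta)\to\tilde a^2$ as $t\to-\infty$.

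Now fix $\theta = -0.505050505050505 \pm 10^{-16}$; this lies in $(-1,1)$, hence in $\mathbb{B}_1$. Evaluate the polynomial $P^{MK}(\theta) = \sum_{m\in\mathcal{F}_M}\bar p_m\theta^m$ in interval arithmetic to obtain an enclosure of $P^{MK}(\theta)\in\ell^{1}_{\nu}$. By the remark following Theorem \ref{th:Z1D_c1} one has $|P(\theta) - P^{MK}(\theta)|_\nu \le r_P$, whence
\[
|P(\theta) - \tilde a^1|_\nu \;\le\; |P^{MK}(\theta) - \tilde a^1|_\nu + r_P .
\]
The decisive machine check is that this right-hand side is strictly smaller than $1$; when it holds, $P(\theta)\in\mathcal{A}$. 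By Lemma \ref{lem:attr_domain}, $\mathcal{A}$ is an absorbing neighbourhood of $\tilde a^1$, so the forward trajectory $a(t)$ of \eqref{eq:Fishers_seq} with $a(0) = P(\theta)$ is defined for all $t\ge 0$, stays in $\mathcal{A}$, and tends to $\tilde a^1$ as $t\to\infty$. Gluing this forward orbit to the backward orbit $a(t) = P(\exp(\tilde\lambda t)\theta)$, $t\le 0$, at $t = 0$ gives a single full solution asymptotic to $\tilde a^2$ as $t\to-\infty$ and to $\tilde a^1$ as $t\to+\infty$, i.e. the asserted connecting orbit.

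I expect the main obstacle to be the balancing act implicit in the second and third paragraphs. The eigenvector scaling $s$ must be large enough that the validated local manifold $P[\mathbb{B}_1]$ reaches into $\mathcal{A}$ — that is, that some real $\theta\in(-1,1)$ has $|P^{MK}(\theta) - \tilde a^1|_\nu$ comfortably below $1$ — yet small enough that $p(r)$ still attains a negative value and that the resulting $r_P$ is negligible against the margin $1 - |P^{MK}(\theta) - \tilde a^1|_\nu$. Arranging all of these inequalities at once, together with a Taylor truncation order $M$ large enough for the coefficient-decay control of Section \ref{sec:decayRates} to make the tail bounds effective, is where the real computational effort lies; once the numbers cooperate, the remaining argument (forward trapping plus backward invariance, then concatenation) is soft.
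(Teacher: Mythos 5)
Your proposal is correct and follows essentially the same route as the paper: validate the linear data and the parameterization via the radii polynomials, bound $|P(\theta)-\tilde a^1|_\nu$ by $r_P + |P^{MK}(\theta)-\tilde a^1|_\nu$ via the triangle inequality, check by interval arithmetic that this is less than $1$ so that $P(\theta)\in\mathcal{A}$, and conclude by Lemma \ref{lem:attr_domain} together with the backward orbit on the unstable manifold. The paper's own proof is just a terser version of this, delegating the numerical checks to the cited Matlab scripts.
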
 

\begin{proof}
By using the inequality 
\[
|P(\theta)-\tilde{a}^1|_{\nu}\leq  |P(\theta)-P^{MK}(\theta)|_{\nu} + |P^{MK}(\theta)-\tilde{a}^1|_{\nu}\leq r_{P} + |P^{MK}(\theta)-\tilde{a}^1|_{\nu}
\] 
we can rigorously check for any given $\theta$ if the true image $P(\theta)$ lies in $\mathcal{A}$. This computation is carried out using Matlab and Intlab in the program. In case of success it follows immediately from Lemma \ref{lem:attr_domain} that there is a connecting orbit from $\tilde{a}^2$ to $\tilde{a}^1$. The Matlab script \verb|proofconnection.m| available at \cite{parmPDEcode} carries out this check. It is called at the end of the script \verb|unstablec1_intval.m|, which computes and validates the parametrization.

\end{proof}

\section{Acknowledgments}
The authors would like to thank Mr. Jonathan Jaquette for suggesting to us the 
argument given in Appendix A.  Conversations with J.B. van den Berg, J.P. Lessard
and Rafael de la Llave were also extremely valuable.    
J.D.M.J. was partially supported by NSF grant DMS-1318172.

\appendix

\section{Domain of attraction of $\tilde{a} = (1,0,0,\ldots)$}\label{App1}
Lemma \ref{lem:attr_domain} in Section \ref{sec:connorbit} states that the  equilibrium point $\tilde{a} = (1,0,0,\ldots)$ of  \eqref{eq:Fishers_seq} is an attracting fixed point with attracting neighborhood 
\begin{equation}
\mathcal{A} = \left\{a\in \ell^{1}_{\nu}: |a-\tilde{a}|_{\nu}<1\right\}.
\end{equation} 
\begin{proof}
Write $a = \tilde{a} + h$. Then $a' = g(a)$ if 
\begin{equation}\label{eq:pert_eq}
h_{k}' = -(k^2+\alpha)h_k -\alpha (h\ast h)_k =  (Lh)_k + N(h)_k,\quad k\geq 0,  
\end{equation}
where $L = Dg(\tilde{a})$ and $N(h) = g(\tilde{a}+h)-g(\tilde{a})-Lh$. Denote $h(0) = h_0$ and let $h(t)$ solve \eqref{eq:pert_eq} with initial condition $h_0$. We show that if $|h_0|_\nu<1$, then $\displaystyle\lim_{t\to\infty}|h(t)|_\nu = 0$. Define $\left(\mathrm{e}^{-Lt}h_0\right)_k\bydef \mathrm{e}^{-(k^2+\alpha)t}h_{0k}$. Then 
\begin{equation}\label{eq:contractionsemigroup}
|\mathrm{e}^{-Lt}h_0|_\nu\leq \mathrm{e}^{-\alpha t}|h_{0}|_{\nu}.
\end{equation} Using the variation of constants formula we have
\begin{equation}\label{eq:varconst}
h(t) = \mathrm{e}^{-Lt}h_0 + \int_{0}^{t}\mathrm{e}^{-L(t-s)}(h\ast h)(s)ds.
\end{equation}
Using \eqref{eq:contractionsemigroup} in \eqref{eq:varconst} we obtain
\begin{equation}\label{eq:normbound}
|h(t)|_{\nu}\leq \mathrm{e}^{-\alpha t}|h_0|_\nu + \int_{0}^{t}\mathrm{e}^{-\alpha(t-s)}\alpha\underbrace{|h\ast h|_{\nu}}_{\leq |h|_{\nu}|h|_{\nu}}(s)ds.
\end{equation}
Assume $|h_0|_{\nu}<r<1$. By continuity of $|h(t)|$ there is a $t_{1}>0$ such that $r\leq \max_{s\in[0,t_1]}\leq \rho_1<1$. Then for $t\in[0,t_1]$:
\begin{equation}
\mathrm{e}^{\alpha t}|h(t)|_{\nu}\leq |h_0|_\nu + \int_{0}^{t}\alpha \rho_1 \mathrm{e}^{\alpha s}|h|_{\nu}(s)ds
\end{equation}
Using Gronwall's inequality for the function $\mathrm{e}^{\alpha t}|h(t)|_{\nu}$ we obtain 
\begin{equation}
\mathrm{e}^{\alpha t}|h(t)|_{\nu}\leq |h_0|_\nu \mathrm{e}^{\int_{0}^{t}\rho_1\alpha ds},
\end{equation}
hence $|h(t_1)|_\nu\leq |h_0|\mathrm{e}^{-\alpha(1-\rho_1)t_1}<|h_0|_{\nu}$. Inductively we construct a sequence of times $t_{k}$ with $\lim_{k\to\infty}t_k = \infty$ and $|h(t_k)|<|h(t_{k-1})|$ for $k\geq 2$. (By continuity of $|h(t)|_{\nu}$, hence if $t_k\to t^{\infty}<\infty$, then $|h(t)|_{\nu}$ would not be continuous in $t^{\infty}$.) 
As $(|h(t_{k})|_{\nu})_{k\in\mathbb{N}}$ is decreasing and bounded from below it converges to $0\leq \delta<1$. Assume $\lim_{k\to\infty}|h(t_k)| = \delta>0$.  There exists a $K>0$ such that $|h(t_k)|<\frac{1-\delta}{2}$ for all $k\geq K$. Then $|h(t)|_\nu\leq |h_0|\mathrm{e}^{-\alpha(1-\rho_K)t}<|h_0|_{\nu}$ for all $t\geq t_{K}$. For $t\to\infty$ this yields $\delta<0$, a contradiction. Hence $\delta = 0$.
\end{proof}

\bibliographystyle{plain}

\bibliography{references}

\end{document}